\newtheorem{theorem}{Theorem}[section]
\newtheorem{lemma}[theorem]{Lemma}
\newtheorem{proposition}[theorem]{Proposition}
\newtheorem{corollary}[theorem]{Corollary}
\newtheorem{claim}[theorem]{Claim}
\theoremstyle{definition}
\newtheorem{definition}[theorem]{Definition}
\newtheorem{remark}[theorem]{Remark}
\newcommand\ph\varphi
\newcommand\up{{\uparrow}}
\newcommand\dn{{\downarrow}}
\newcommand\sd\triangle
\renewcommand\le\leqslant
\renewcommand\ge\geqslant
\newcommand\imp\leqqslant
\newcommand\bimp\leftrightarrow
\newcommand\emp\varnothing
\newcommand\then\Rightarrow
\newcommand\sV{{\mathscr V}}
\newcommand\IPC{\ensuremath{\mathsf{IPC}}}
\newcommand\HB{\ensuremath{\mathsf{HB}}}
\newcommand\Sh{\ensuremath{\mathsf S_{\HB}}}
\newcommand\She{\ensuremath{\mathsf S}}
\newcommand\F{\ensuremath{\mathsf F_{\HB}}}
\newcommand\Fi{\ensuremath{\mathsf F}}
\newcommand\Log{\ensuremath{\mathsf L}}
\newcommand\CPC{\ensuremath{\mathsf{CPC}}}
\newcommand\sfour{\ensuremath{\mathsf{S4}}}
\newcommand\Grz{\ensuremath{\mathsf{Grz}}}
\newcommand\bb{\ensuremath{\mathsf{bb}}}
\DeclareMathOperator\Int{{\mathsf{int}}}
\DeclareMathOperator\Cl{{\mathsf{cl}}}
\DeclareMathOperator\closure{{\mathsf{cl}}}
\DeclareMathOperator\Op{{\mathsf{Op}}}
\DeclareMathOperator\Up{{\mathsf{Up}}}
\newcommand\coboundary\eth
\newcommand\sF{{\mathscr F}}
\newcommand\bg{\beta}
\newcommand\cg{\gamma}
\newcommand\ag{\alpha}
\newcommand\dg{\delta}
\newcommand\kg{\varkappa}
\newcommand\fH{{\mathbb H}}
\newcommand\fA{{\mathbb A}}
\newcommand\fB{{\mathbb B}}
\newcommand\cV[1]{\left\llbracket#1\right\rrbracket}
\begin{document}

\title[A negative solution of Kuznetsov's problem for bi-Heyting algebras]{A negative solution of Kuznetsov's problem for varieties of bi-Heyting algebras}

\author{G. Bezhanishvili}
\address{New Mexico State University}
\email{guram@nmsu.edu}

\author{D. Gabelaia}
\address{TSU Razmadze Mathematical Institute}
\email{gabelaia@gmail.com}

\author{M. Jibladze}
\address{TSU Razmadze Mathematical Institute}
\email{mamuka.jibladze@gmail.com}

\subjclass[2010]{03B55; 06D20; 06D22; 03B45; 06E15}
\keywords{Intermediate logic, HB-logic, Heyting algebra, bi-Heyting algebra, Esakia duality, Fine frame, Kripke incompleteness, topological incompleteness}

\begin{abstract}
We show that there exist (continuum many) varieties of bi-Heyting algebras that are not generated by their complete members. It follows that there exist (continuum many) extensions of the Heyting-Brouwer logic $\HB$ that are topologically incomplete. This result provides further insight into the long-standing open problem of Kuznetsov by yielding a negative solution of the reformulation of the problem from extensions of $\IPC$ to extensions of $\HB$.
\end{abstract}

\maketitle


\section{Introduction}

Intermediate logics are the logics that are situated between the intuitionistic propositional calculus $\IPC$ and the classical propositional calculus $\CPC$. This is a large class of logics that has been thoroughly investigated over the years. One problem, however, has eluded solution for many years: it is a long-standing open problem of Kuznetsov \cite{Kuz74} whether there exists an intermediate logic that is topologically incomplete.

There are a number of results related to Kuznetsov's problem. It is well known (see, e.g., \cite{RS63} or \cite{CZ97}) that the G\"odel translation embeds $\IPC$ into Lewis's modal system $\sfour$, yielding a close correspondence between intermediate logics and normal extensions of $\sfour$. In \cite{Fin74a} Fine constructed a Kripke incomplete normal extension of $\sfour$. In \cite{Ger75a} Gerson showed that this system is also topologically incomplete, and in \cite{Ger75b} he constructed a Kripke incomplete normal extension of $\sfour$ that is topologically complete. In \cite{She77} Shehtman utilized Fine's results to construct a Kripke incomplete intermediate logic, and in \cite{She80} (see also \cite{She98}) he constructed a topologically incomplete normal extension of Grzegorczyk's modal system $\Grz$ (which is the largest modal companion of $\IPC$). These methods do not appear to be sufficient for resolving Kuznetsov's problem.

In this note we concentrate on Kuznetsov's problem for a class of logics that is obtained from intermediate logics by enriching the language with an additional binary connective of coimplication. It is well known that algebraic models of $\IPC$ are Heyting algebras, which are bounded distributive lattices with an additional binary operation of implication. It was pointed out already by McKinsey and Tarski \cite{MT46} that unlike Boolean algebras, Heyting algebras are not symmetric in that the operation of implication does not have a counterpart. This non-symmetry can be overcome by adding the operation of coimplication to the signature of Heyting algebras. The resulting algebras are known as bi-Heyting algebras. The corresponding logical system was constructed by Rauszer \cite{Rau73} under the name of Heyting-Brouwer logic (HB-logic for short).

We will consider Kuznetsov's problem for the logics intermediate between the Heyting-Brouwer logic $\HB$ and the classical propositional calculus $\CPC$. Fine's intermediate logic $\Fi$ and Shehtman's intermediate logic $\She$ have obvious HB-counterparts, which we denote by $\F$ and $\Sh$, respectively. Let $\mathcal V({\F})$ and $\mathcal V(\Sh)$ be the corresponding varieties of bi-Heyting algebras. Our main result shows that no variety in the interval $[\mathcal V({\F}),\mathcal V({\Sh})]$ is generated by its complete members. It follows from a result of Litak \cite{Lit02} that there are continuum many such varieties. Thus, there are continuuum many varieties of bi-Heyting algebras that are not generated by their complete members. This implies that there are continuum many extensions of $\HB$ that are topologically incomplete. In particular, $\Sh$ is a finitely axiomatizable extension of $\HB$ that is topologically incomplete. Thus, as with the extensions of $\sfour$, Kuznetsov's problem has a negative solution for the extensions of $\HB$.

The paper is organized as follows. In \S 2 we recall all the necessary background, including Esakia representation of Heyting algebras and bi-Heyting algebras. This will be our main technical tool. In \S 3 we recall the Fine frame, construct the corresponding Esakia space, and show that the algebra of its clopen upsets is a bi-Heyting algebra. We also introduce the HB-counterpart $\F$ of the Fine logic $\Fi$. In \S 4 we consider the HB-counterpart $\Sh$ of the Shehtman logic $\She$ and show that $\Sh$ is properly contained in $\F$. In \S 5 we give semantic characterizations of the Gabbay-de Jongh axiom $\bb_2$ which plays an important role in our considerations. In \S 6 we derive in $\IPC$ some consequences of Shehtman's axioms. Finally, in \S 7 we prove our main result that no variety of bi-Heyting algebras in the interval $[\mathcal V(\F),\mathcal V(\Sh)]$ is generated by its complete members. As a consequence, we obtain that all extensions of $\HB$ in the interval $[\Sh,\F]$ are topologically incomplete. It follows from a result of Litak \cite{Lit02} that there are continuum many such extensions of $\HB$. We also show that there are continuum many intermediate logics that are incomplete with respect to the class of their complete bi-Heyting algebras.\footnote{We were notified by Guillaume Massas that he has obtained a similar result using different technique.} This generalizes the results of Shehtman \cite{She77} and Litak \cite{Lit02} about the existence of Kripke incomplete intermediate logics.

\section{Preliminaries}


\subsection{\texorpdfstring{$\IPC$}{IPC}, Heyting algebras, and Esakia spaces}


We recall that a \emph{Heyting algebra} is a bounded distributive lattice $\fH$ with an additional binary operation $\to$ satisfying for all $a,b,x\in \fH$:
\[
a\land x\le b \mbox{ iff } x\le a\to b.
\]
It is well known that Heyting algebras are the algebraic models of $\IPC$. A class of algebras is a \emph{variety} if it is closed under subalgebras, homomorphic images, and products. Each intermediate logic $\Log$ gives rise to the variety $\mathcal V(\Log)$ consisting of the Heyting algebras that are algebraic models of $\Log$. By the standard Lindenbaum-Tarski construction, each interemdiate logic is complete with respect to $\mathcal V(\Log)$.

Typical examples of Heyting algebras come from topological spaces. For a topological space $X$, the set $\Op(X)$ of open subsets of $X$ forms a Heyting algebra. The lattice operations on $\Op(X)$ are given by set-theoretic intersection and union, and the implication is given by
\[
U\to V = \Int((X\setminus U)\cup V) = X\setminus\Cl(U\setminus V),
\]
where $\Int$ and $\Cl$ denote topological interior and closure.

\begin{theorem} [Stone Representation]
Each Heyting algebra $\fH$ is isomorphic to a subalgebra of $\Op(X)$ for some topological space $X$.
\end{theorem}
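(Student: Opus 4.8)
The plan is to build the representing space $X$ out of the prime filters of $\fH$, mimicking Stone's construction for distributive lattices and then checking that the implication is respected. Concretely, I would let $X$ be the set of all prime filters of $\fH$ and define a map $\eta\colon\fH\to\mathcal P(X)$ by $\eta(a)=\{P\in X : a\in P\}$. Since $\eta(a)\cap\eta(b)=\eta(a\land b)$ and $\eta(1)=X$, the family $\{\eta(a):a\in\fH\}$ is closed under finite intersections and covers $X$, so it forms a basis for a topology on $X$; in particular every $\eta(a)$ is open. The goal is then to show that $\eta$ is a Heyting-algebra isomorphism onto its image $\eta[\fH]\subseteq\Op(X)$.

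First I would verify that $\eta$ is a bounded lattice homomorphism. Preservation of $\land$, $0$, and $1$ is immediate from the filter axioms, while preservation of $\lor$ is exactly the primeness condition: $a\lor b\in P$ iff $a\in P$ or $b\in P$. The key order-theoretic lemma I would isolate is that $\eta(c)\subseteq\eta(d)$ if and only if $c\le d$. The forward implication is the nontrivial direction and rests on the Prime Filter Theorem for distributive lattices: if $c\not\le d$, Zorn's lemma produces a prime filter containing $c$ but excluding $d$. This lemma simultaneously gives injectivity of $\eta$.

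The heart of the proof, and the step specific to Heyting algebras rather than mere distributive lattices, is showing that $\eta$ transports the Heyting implication, that is, that $\eta(a\to b)$ coincides with the implication $\eta(a)\to\eta(b)$ computed in $\Op(X)$, namely the largest open set $W$ with $W\cap\eta(a)\subseteq\eta(b)$. For the inclusion $\eta(a\to b)\subseteq\eta(a)\to\eta(b)$ I would use that $(a\to b)\land a\le b$ together with the fact that $P$ is a filter to conclude $\eta(a\to b)\cap\eta(a)=\eta((a\to b)\land a)\subseteq\eta(b)$. For the reverse inclusion, any point of $\eta(a)\to\eta(b)$ has a basic neighborhood $\eta(c)$ with $\eta(c)\cap\eta(a)\subseteq\eta(b)$; translating through the order lemma gives $c\land a\le b$, hence $c\le a\to b$, so the point lies in $\eta(a\to b)$. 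This also shows that $\eta[\fH]$ is closed under the implication of $\Op(X)$, so it is genuinely a Heyting subalgebra.

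I expect the main obstacle to be this last step, the matching of implications, and in particular guarding against the possibility that the implication computed in the ambient algebra $\Op(X)$ is a strictly larger open set than $\eta(a\to b)$. The order lemma $\eta(c)\subseteq\eta(d)$ iff $c\le d$ is what rules this out, by letting us read off $c\land a\le b$ from an arbitrary basic open neighborhood, and the Prime Filter Theorem (hence some form of the axiom of choice) is the essential nonelementary ingredient underpinning that lemma.
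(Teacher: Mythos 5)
Your proof is correct, and it is the standard argument: the paper states this theorem as classical background without proof, and the prime-filter construction you give (Stone/Rasiowa--Sikorski, the reference \cite{RS63} the paper cites) is precisely the canonical route, including the two nontrivial points you isolate — the order lemma via the Prime Filter Theorem and the verification that the implication computed in $\Op(X)$ agrees with $\eta(a\to b)$ rather than being strictly larger. Indeed, your space of prime filters is exactly the underlying set of the Esakia space that the paper's subsequent Kripke and Esakia representation theorems refine, so your argument is fully in line with the paper's framework.
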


A \emph{Kripke frame} for $\IPC$ is a partially ordered set $(X,\leq)$. As usual, for $S\subseteq X$, let
\[
\up S=\{x\in X\mid\exists s\in S : s\le x\} \mbox{ and } \dn S=\{x\in X\mid\exists s\in S : x\le s\}.
\]
If $S=\{s\}$, we simply write $\up s$ and $\dn s$. We say that $S$ is an \emph{upset} if $S=\up S$
and $S$ is a \emph{downset} if $S=\dn S$.
Then the upsets form a topology on $X$ known as the \emph{Alexandroff topology}, where each point has a least open neighborhood. In this topology the closure of $S$ is $\dn S$.

Let $\Up(X)$ be the set of upsets of $X$. Then $\Up(X)=\Op(X)$, so $\Up(X)$ is a Heyting algebra, where
\[
U\to V = X\setminus\dn(U\setminus V) = \{x\mid \up x\cap U\subseteq V\}.
\]

\begin{theorem} [Kripke Representation]
Each Heyting algebra $\fH$ is isomorphic to a subalgebra of $\Up(X)$ for some Kripke frame $X$.
\end{theorem}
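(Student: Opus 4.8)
The plan is to represent $\fH$ by its prime filters, following the standard Stone--Birkhoff construction adapted to Heyting algebras. I would take $X$ to be the set of all prime filters of $\fH$ (proper filters $F$ with the property that $a\vee b\in F$ implies $a\in F$ or $b\in F$), ordered by inclusion, so that $(X,\subseteq)$ is a Kripke frame. Define $h\colon\fH\to\Up(X)$ by $h(a)=\{F\in X\mid a\in F\}$. Since filters are upward closed, $a\in F$ and $F\subseteq G$ give $a\in G$, so each $h(a)$ is an upset and $h$ does land in $\Up(X)$. It then remains to check that $h$ is an injective homomorphism of Heyting algebras.

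The next step is to verify that $h$ is a homomorphism of bounded distributive lattices. The identities $h(0)=\emp$ and $h(1)=X$ are immediate, and $h(a\land b)=h(a)\cap h(b)$ follows from the filter axioms. For joins, the inclusion $h(a)\cup h(b)\subseteq h(a\vee b)$ is just monotonicity of $h$, while the reverse inclusion $h(a\vee b)\subseteq h(a)\cup h(b)$ is precisely the primeness of the filters in $X$.

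The crux is the clause for implication, namely $h(a\to b)=h(a)\to h(b)$, where the right-hand side is computed in $\Up(X)$ as $\{F\mid\up F\cap h(a)\subseteq h(b)\}$. The forward inclusion is algebraic: if $a\to b\in F\subseteq G$ and $a\in G$, then the Heyting identity $a\land(a\to b)\le b$ forces $b\in G$. The reverse inclusion is where the real work lies. Assuming $a\to b\notin F$, I want a prime filter $G\supseteq F$ with $a\in G$ but $b\notin G$. Letting $F'$ be the filter generated by $F\cup\{a\}$, a short computation using the defining adjunction $a\land x\le b \iff x\le a\to b$ shows $b\notin F'$: otherwise $f\land a\le b$ for some $f\in F$, whence $f\le a\to b$ and so $a\to b\in F$, a contradiction. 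The passage from ``$b\notin F'$'' to a prime filter $G\supseteq F'$ avoiding $b$ is exactly the \emph{Prime Filter Lemma}, and this is the step I expect to be the main obstacle, as it rests on a choice principle proved via Zorn's Lemma and crucially uses distributivity of $\fH$.

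Finally, injectivity of $h$ follows from the same separation principle. If $a\ne b$, then without loss of generality $a\not\le b$, and applying the Prime Filter Lemma to the principal filter $\up a$ and the element $b$ produces a prime filter containing $a$ but not $b$; hence $h(a)\ne h(b)$. Therefore $h$ is an embedding of $\fH$ into $\Up(X)$, realizing $\fH$ as a subalgebra of the Heyting algebra of upsets of the Kripke frame $X$, which completes the proof.
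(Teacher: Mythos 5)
Your proof is correct and is precisely the canonical argument behind this theorem: the paper states the Kripke Representation without proof, as classical background (it is the prime-filter representation going back to Stone, Birkhoff, and Kripke semantics), and your construction --- prime filters ordered by inclusion, the map $h(a)=\{F\mid a\in F\}$, the adjunction argument for the implication clause, and the Prime Filter Lemma for both the reverse inclusion and injectivity --- is exactly the standard proof the paper implicitly relies on. No gaps: the identification of the Prime Filter Lemma (Zorn plus distributivity) as the crux is accurate, and the verification of the implication clause against the paper's formula $U\to V=\{x\mid \up x\cap U\subseteq V\}$ is the right one.
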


To recover the image of $\fH$ in $\Up(X)$, we need to introduce additional structure on $X$.

\begin{definition}
An \emph{Esakia space} is a triple $X=(X,\tau,\le)$ where $(X,\tau)$ is a Stone space (compact, Hausdorff, zero-dimensional space), $(X,\le)$ is a Kripke frame, and the order $\le$ is \emph{continuous}, meaning that:
\begin{itemize}
\item[(i)] $\up x$ is closed for each $x\in X$;
\item[(ii)] $U$ clopen implies $\dn U$ is clopen.
\end{itemize}
\end{definition}

The clopen upsets of an Esakia space $X$ form a subalgebra of $\Up(X)$, and we have the following representation of Heyting algebras:

\begin{theorem} [Esakia Representation]
Each Heyting algebra is isomorphic to the algebra of clopen upsets of some Esakia space.
\end{theorem}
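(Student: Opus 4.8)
The plan is to realize $\fH$ as the algebra of clopen upsets of its space of prime filters. Let $X$ be the set of prime filters of $\fH$, ordered by inclusion, and for each $a\in\fH$ set $\ph(a)=\{F\in X\mid a\in F\}$. I would topologize $X$ by declaring $\{\ph(a)\mid a\in\fH\}\cup\{X\setminus\ph(a)\mid a\in\fH\}$ to be a subbasis, so that every $\ph(a)$ is clopen by construction. The goal is then to check that $(X,\tau,\subseteq)$ is an Esakia space and that $\ph$ is a Heyting-algebra isomorphism from $\fH$ onto the clopen upsets of $X$.

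First I would record the Prime Filter Lemma: whenever $a\not\le b$ in $\fH$, there is a prime filter containing $a$ but not $b$, obtained by enlarging the principal filter $\up a$ away from the principal ideal $\dn b$ via Zorn's Lemma, distributivity guaranteeing that a maximal such filter is prime. This at once yields that $\ph$ is injective and that distinct points are separated by some clopen $\ph(a)$, so that $X$ is Hausdorff and zero-dimensional. Each $\ph(a)$ is evidently an upset, and that $\ph$ preserves $\land,\lor,0,1$ is routine from the filter axioms, with primeness handling the join.

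Next I would verify compactness: by the Alexander Subbase Lemma it suffices to treat covers by subbasic clopens, and there the finite intersection property transfers back to $\fH$ through the Prime Filter Lemma. Compactness then powers the two remaining nontrivial points. For surjectivity, given a clopen upset $U$ one separates each $F\in U$ from the (compact) downset $X\setminus U$ by an element $a_F\in F$ with $\ph(a_F)\subseteq U$, and a second compactness step covers $U$ by finitely many such $\ph(a_F)$, collapsing $U$ to a single $\ph(a)$ via a join. For the implication I would show that $a\to b\in F$ iff every prime filter $G\supseteq F$ with $a\in G$ also contains $b$; the forward direction is immediate, and the converse again invokes the Prime Filter Lemma, applied to the filter generated by $F\cup\{a\}$ together with the ideal $\dn b$. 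This is precisely the statement that $\ph(a\to b)=\ph(a)\to\ph(b)$ in $\Up(X)$.

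Finally, for continuity of the order, condition (i) holds because $\up F=\bigcap_{a\in F}\ph(a)$ is an intersection of clopens and hence closed, while condition (ii) follows by expressing the complement of $\dn U$, for clopen $U$, as a union of subbasic clopens and applying compactness to obtain a clopen. The main obstacle I anticipate is not any isolated computation but the disciplined, repeated use of the Prime Filter Lemma to convert order-theoretic separation in $\fH$ into topological facts about $X$ — most delicately in the implication case and in the Esakia condition (ii), where the Prime Filter Lemma must be married to compactness in order to remain within the clopen upsets.
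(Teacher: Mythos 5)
Your overall construction is the canonical one: the paper states this theorem as background, citing Esakia, and the intended proof is exactly the prime-filter space with the Priestley-type topology that you describe. Most of your steps (the Prime Filter Lemma, injectivity, the lattice operations, compactness via Alexander's lemma, surjectivity onto clopen upsets, and the characterization of implication) are correctly outlined. But there is one genuine gap, and it sits at the single point where the Heyting structure of $\fH$ must actually be used: Esakia condition (ii). The argument you sketch --- expressing the complement of $\dn U$ as a union of subbasic clopens and ``applying compactness'' --- proves only that $\dn U$ is \emph{closed}: for $F\notin\dn U$ one has $\up F\cap U=\emp$, so by compactness of $U$ and $\up F=\bigcap_{a\in F}\ph(a)$ there is $a\in F$ with $\ph(a)\cap U=\emp$, whence $X\setminus\dn U$ is a union of clopens, i.e.\ open. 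No further compactness argument can upgrade this to openness of $\dn U$: you cannot extract a finite subcover from that union without already knowing $X\setminus\dn U$ is closed, which is what you are trying to prove. Indeed, the implication ``$U$ clopen $\Rightarrow$ $\dn U$ clopen'' \emph{fails} in the prime-filter space of a general bounded distributive lattice --- it is precisely the condition that singles out duals of Heyting algebras among Priestley spaces --- so any argument for it that never invokes $\to$, like the one you describe, cannot be correct.

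The repair is short and uses what you already established in your implication step. By compactness, every clopen $U$ is a finite union of differences of subbasic clopens, $U=\bigcup_{k=1}^{n}\left(\ph(a_k)\setminus\ph(b_k)\right)$. Your characterization ``$a\to b\in F$ iff every prime filter $G\supseteq F$ with $a\in G$ also contains $b$'' says exactly that $\dn\left(\ph(a)\setminus\ph(b)\right)=X\setminus\ph(a\to b)$. Hence $\dn U=\bigcup_{k=1}^{n}\left(X\setminus\ph(a_k\to b_k)\right)$ is clopen, and (ii) holds. Two smaller points of hygiene: the Prime Filter Lemma you state is the principal case (separating $\up a$ from $\dn b$), whereas both your compactness argument and your implication step need the general filter-versus-ideal separation form --- the Zorn argument is identical, but it should be stated in that generality; and in the surjectivity step, producing the element $a_F$ with $\ph(a_F)\subseteq U$ is itself a compactness argument over $X\setminus U$ (finitely many separating elements must be met), so that step uses compactness twice, not once.
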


While we will not need it here, this representation can be extended to a full duality (see \cite{Esa74,Esa19}). As a consequence, we obtain that each intermediate logic is sound and complete with respect to Esakia spaces.


\subsection{\texorpdfstring{$\HB$}{HB}, bi-Heyting algebras, and BH-spaces}

As we pointed out in the introduction, bi-Heyting algebras are obtained by enriching the signature of Heyting algebras by a binary operation
$\leftarrow$ of coimplication satisfying
\[
a\lor x\ge b \mbox{ iff } x\ge a\leftarrow b.
\]
Bi-Heyting algebras are algebraic models of the Heyting-Brouwer system $\HB$ introduced by Rauszer \cite{Rau73}. The axiomatization we give below is taken from Wolter \cite{Wol98}.
We use the standard abbreviations $\neg p := p\to\bot$ and ${\sim}p := \top\leftarrow p$.

\begin{definition}
\begin{enumerate}
\item[]
\item The \emph{Heyting-Brouwer calculus} $\HB$ is the least set of formulas (in the propositional language enriched with $\leftarrow$) containing $\IPC$, the axioms
\begin{itemize}
\item $p\to(q\vee(q\leftarrow p))$
\item $(q\leftarrow p)\to{\sim}(p\to q)$
\item $(r\leftarrow(q\leftarrow p))\to((p\vee q)\leftarrow p)$
\item $\neg(q\leftarrow p)\to(p\to q)$
\item $\neg(p\leftarrow p)$
\end{itemize}
and closed under the rules of substitution, modus ponens, and $\dfrac{\varphi}{\neg{\sim}\varphi}$.
\item An \emph{intermediate HB-logic} is a consistent set of formulas containing $\HB$ and closed under these three rules.
\end{enumerate}
\end{definition}

As with intermediate logics, each intermediate HB-logic $\Log$ is complete with respect to the variety $\mathcal V(\Log)$ of bi-Heyting algebras.

If $(X,\le)$ is a Kripke frame, then $\Up(X)$ is a bi-Heyting algebra, where
\[
U\leftarrow V = \up(V \setminus U) = \{x \mid \dn x\cap V\not\subseteq U\}.
\]

We next introduce Esakia spaces that are dual to bi-Heyting algebras. We call them BH-spaces (for bi-Heyting or Brouwer-Heyting).

\begin{definition}
A \emph{BH-space} is an Esakia space $X$ such that $U$ clopen implies $\up U$ is clopen.
\end{definition}

The following representation of bi-Heyting algebras was obtained by Esakia \cite{Esa75,Esa78}.

\begin{theorem}\label{thm:BH}
A Heyting algebra $\mathbb H$ is a bi-Heyting algebra iff its Esakia space $X$ is a BH-space. In particular, each bi-Heyting algebra can be represented as the algebra of clopen upsets of a BH-space.
\end{theorem}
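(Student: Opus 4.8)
The plan is to prove Theorem~\ref{thm:BH} by leveraging the Esakia representation already in hand and analyzing exactly what extra structure the coimplication $\leftarrow$ demands of the dual space. Recall from the Esakia Representation theorem that every Heyting algebra $\fH$ is isomorphic to the algebra $\mathrm{ClopUp}(X)$ of clopen upsets of its Esakia space $X$. On the lattice of \emph{all} upsets we already have the formula $U\leftarrow V=\up(V\setminus U)$, which is dual to the Heyting implication under the order-reversing bijection $U\mapsto X\setminus U$ between upsets and downsets. So at the level of $\Up(X)$ the coimplication is always available; the issue is purely whether it restricts to the subalgebra of \emph{clopen} upsets. Thus the entire theorem reduces to the following equivalence, which I would state as the crux: $\fH\cong\mathrm{ClopUp}(X)$ is closed under $\leftarrow$ if and only if $X$ is a BH-space, i.e.\ $U$ clopen implies $\up U$ clopen.

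First I would fix the dictionary. Under Esakia duality the lattice operations and $\to$ on clopen upsets are computed exactly as in $\Up(X)$, so $\mathrm{ClopUp}(X)$ is already a sub-Heyting-algebra of $\Up(X)$. For the coimplication I would use the displayed formula $U\leftarrow V=\up(V\setminus U)$. The key observation is that $\fH$ is a bi-Heyting algebra precisely when for all clopen upsets $U,V$ the upset $\up(V\setminus U)$ is again clopen, and when it is, it automatically satisfies the defining adjunction $a\vee x\ge b \Leftrightarrow x\ge a\leftarrow b$ (this is a purely lattice-theoretic consequence of the adjunction holding in $\Up(X)$, restricted to a sublattice on which the operation lands). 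So the real content is set-theoretic: controlling when $\up(V\setminus U)$ stays clopen.

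The two directions then run as follows. For the direction ``BH-space $\Rightarrow$ bi-Heyting,'' I would take clopen upsets $U,V$; then $V\setminus U$ is clopen (Stone spaces are closed under clopen Boolean combinations), and since $V\setminus U$ is clopen the BH-condition gives that $\up(V\setminus U)$ is clopen, hence a clopen \emph{upset}, so $U\leftarrow V\in\mathrm{ClopUp}(X)$ and the algebra is closed under $\leftarrow$. For the converse ``bi-Heyting $\Rightarrow$ BH-space,'' I would start from an arbitrary clopen $W$ and must show $\up W$ is clopen. The natural move is to reduce to the case where $W$ is realized as a clopen \emph{difference} $V\setminus U$ of clopen upsets, since $\up W$ for such $W$ equals $U\leftarrow V$, which lies in $\mathrm{ClopUp}(X)$ by hypothesis. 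This reduction is the step I expect to be the main obstacle: one needs that every clopen set can be written as (or that $\up$ of every clopen is expressible via) clopen upsets, which I would obtain from the Esakia/Priestley separation property — clopens are generated by clopen upsets and their complements, so a clopen $W$ decomposes into finitely many pieces $V_i\setminus U_i$ with $U_i,V_i$ clopen upsets, and $\up$ commutes with finite unions, giving $\up W=\bigcup_i\up(V_i\setminus U_i)=\bigcup_i(U_i\leftarrow V_i)$, a finite union of clopen upsets and hence clopen.

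The main obstacle, then, is the decomposition step in the converse: verifying that clopens of an Esakia space are exactly finite unions of differences of clopen upsets. I would justify this from the Priestley separation axiom implicit in the Esakia structure (points are separated by clopen upsets, and continuity of $\le$ ensures $\dn U$ clopen for clopen $U$), which yields a basis of the topology consisting of sets of the form $U\cap(X\setminus U')$ with $U,U'$ clopen upsets; compactness then trims any clopen to a finite union of basic sets. Once that lemma is in place the two implications close cleanly, and the ``in particular'' clause is immediate since every bi-Heyting algebra is some Heyting algebra whose Esakia space, by the equivalence just proved, is a BH-space.
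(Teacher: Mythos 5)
Your proposal cannot be checked against a proof in the paper, because the paper offers none: Theorem~\ref{thm:BH} is stated as a known result of Esakia, with a citation to \cite{Esa75,Esa78} in place of an argument. Judged on its own, your argument is the natural duality proof and its overall structure is right: reduce the theorem to whether the clopen upsets are closed under the operation $U\leftarrow V=\up(V\setminus U)$, get the direction ``BH-space $\Rightarrow$ bi-Heyting'' by noting that $V\setminus U$ is clopen and applying the BH-condition, and get the converse by decomposing an arbitrary clopen set into a finite union of differences of clopen upsets (Priestley separation plus compactness) and using that $\up$ commutes with finite unions. The forward direction and the decomposition lemma are fine as you describe them.

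There is, however, one step you assert without justification, and it is exactly where the real content of the converse sits: you claim that for clopen upsets $U,V$ the set $\up(V\setminus U)$ ``equals $U\leftarrow V$, which lies in $\mathrm{ClopUp}(X)$ by hypothesis.'' The hypothesis that $\fH$ is a bi-Heyting algebra only gives you the existence of a \emph{least clopen upset} $W_0$ containing $V\setminus U$; a priori $W_0$ could be strictly larger than the generated upset $\up(V\setminus U)$, in which case nothing yet forces $\up(V\setminus U)$ to be clopen. (Your remark that the adjunction is a purely lattice-theoretic consequence, restricted to a sublattice on which the operation lands, covers only the situation where $\up(V\setminus U)$ is already known to be clopen, i.e., the easy direction.) The gap is genuine but fillable with the same tools you invoke for the decomposition: one shows $W_0=\up(V\setminus U)$ by separation plus compactness. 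Indeed, suppose $x\in W_0\setminus\up(V\setminus U)$. Then $y\not\le x$ for every $y\in V\setminus U$, so Priestley separation yields for each such $y$ a clopen upset $E_y$ with $y\in E_y$ and $x\notin E_y$; since $V\setminus U$ is closed, hence compact, finitely many $E_{y_1},\dots,E_{y_n}$ cover it, and $W_0\cap(E_{y_1}\cup\dots\cup E_{y_n})$ is a clopen upset containing $V\setminus U$ and omitting $x$, contradicting the leastness of $W_0$. (Equivalently: in an Esakia space every closed upset is the intersection of the clopen upsets containing it.) With this lemma inserted your proof closes; note also that Priestley separation itself deserves a line of proof from the paper's definition of Esakia space, whose axioms are ``$\up x$ closed'' and ``$\dn U$ clopen for $U$ clopen'' rather than the separation property you take as implicit.
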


Consequently, BH-spaces provide an adequate semantics for intermediate HB-logics.

\subsection{Dual characterization of complete algebras}

We finish this preliminary section by recalling the dual description of complete Heyting and bi-Heyting algebras. This description will play an essential role in \S 7. We recall that a topological space is \emph{extremally disconnected} if the closure of each open set is open (and hence clopen). This is adjusted to Esakia spaces as follows.

\begin{definition}
An Esakia space is \emph{extremally order-disconnected} if the closure of each open upset is open.
\end{definition}

Complete Heyting algebras are dually characterized by extremally order-dis\-con\-nec\-ted Esakia spaces (see, e.g., \cite[Thm.~2.4.2]{BB08} and the references therein):

\begin{theorem}\label{thm:cBH}
A Heyting algebra is complete iff its Esakia space is extremally order-dis\-con\-nec\-ted. In particular, a bi-Heyting algebra is complete iff its BH-space is extremally order-disconnected.
\end{theorem}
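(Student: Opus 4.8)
The plan is to represent $\fH$ as the algebra of clopen upsets of its Esakia space $X$ via Esakia Representation, and to use that a bounded lattice is complete iff all joins of subsets exist. Since the join in $\fH$ of a family of clopen upsets $\{U_i\}$, if it exists, must be the least clopen upset containing the open upset $U:=\bigcup_i U_i$, the whole argument reduces to understanding least clopen upsets containing a given open upset and relating them to $\Cl(U)$. To this end I would first isolate two facts about an arbitrary Esakia space: (A) the closure of an open upset is again an upset; and (B) every open upset is the union of the clopen upsets it contains.

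For (A), I would first note that $\dn$ sends open sets to open sets: clopen sets form a basis, $\dn$ commutes with unions, and $\dn$ of a clopen set is clopen by the defining Esakia condition. From this it follows that the interior of a downset is a downset, since if $x\in\Int D$ witnessed by an open $O\subseteq D$ and $y\le x$, then $y\in\dn O\subseteq D$ with $\dn O$ open. Writing $\Cl(U)=X\setminus\Int(X\setminus U)$ and noting that $X\setminus U$ is a downset then yields that $\Cl(U)$ is an upset. For (B), given an open upset $U$ and $x\in U$, every $y\notin U$ satisfies $x\not\le y$, so the Priestley separation property of Esakia spaces produces a clopen upset $W_y\ni x$ with $y\notin W_y$; compactness of the closed set $X\setminus U$ then extracts a finite intersection of such $W_y$, which is a clopen upset containing $x$ and contained in $U$.

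With these in hand the equivalence splits into two directions. For the easy direction, if $X$ is extremally order-disconnected then for any family $\{U_i\}$ of clopen upsets the set $\Cl(U)$, with $U=\bigcup_i U_i$, is clopen by hypothesis and an upset by (A); it is plainly the least clopen upset containing every $U_i$, hence their join in $\fH$, so $\fH$ is complete. For the converse, assume $\fH$ complete and let $U$ be an open upset; by (B) it is the union of the clopen upsets it contains, and their join $S$ in $\fH$ is then the least clopen upset containing $U$. Since $S$ is closed, $\Cl(U)\subseteq S$, and the reverse inclusion is the \textbf{main obstacle}. To rule out a point $s\in S\setminus\Cl(U)$, I would choose a clopen $C\ni s$ disjoint from $\Cl(U)$, hence from $U$, and pass to $S':=S\setminus\dn C$; because $\dn C$ is clopen and disjoint from the upset $U$, the set $S'$ is a clopen upset still containing $U$ but properly smaller than $S$ (it omits $s\in\dn C$), contradicting the minimality of $S$. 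Thus $S=\Cl(U)$ is clopen and $X$ is extremally order-disconnected.

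Finally, the bi-Heyting statement is immediate: completeness of a bi-Heyting algebra is a property of its underlying lattice, which coincides with that of the underlying Heyting algebra, and by Theorem~\ref{thm:BH} its dual space is simultaneously an Esakia space and a BH-space; so the characterization transfers verbatim.
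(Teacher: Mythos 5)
Your proof is correct, but it does not parallel anything in the paper: Theorem~\ref{thm:cBH} is stated there purely as background, justified only by a citation to \cite[Thm.~2.4.2]{BB08}, so what you have produced is a self-contained argument where the paper offers none. Your route is the natural duality-theoretic one: reduce completeness to the existence of joins of families of clopen upsets, observe that such a join, if it exists, must be the least clopen upset containing the union $U$ of the family, and then identify that least clopen upset with $\Cl(U)$ --- the key ingredients being your facts (A) and (B) and the ``remove $\dn C$'' argument, which correctly defeats minimality of the join $S$ whenever $S$ properly contains $\Cl(U)$ (note $S\setminus\dn C$ is clopen since $\dn C$ is, is an upset as an intersection of two upsets, and still contains $U$ because $U$ is an upset disjoint from $C$). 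The one point to flag is your appeal to the Priestley separation property in (B): under the paper's definition of an Esakia space (Stone space, $\up x$ closed, $\dn$ of clopen is clopen), separation is a consequence rather than an axiom. It is easily derived, though: if $x\not\le y$, then $y\notin\up x$ and $\up x$ is closed, so zero-dimensionality yields a clopen $C$ with $\up x\subseteq C$ and $y\notin C$; the set $X\setminus\dn(X\setminus C)$ is then a clopen upset containing $x$ and missing $y$. With that remark added your argument is complete, and the bi-Heyting clause does transfer verbatim via Theorem~\ref{thm:BH}, since completeness is a property of the underlying lattice. What the paper's citation buys is brevity in the preliminaries; what your proof buys is a demonstration that the characterization follows from the elementary Esakia-space axioms alone, using exactly the mechanism (closures of open upsets computing joins) that the paper later relies on in \eqref{eq:bihjoinmeet}.
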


In the Heyting algebra of clopen upsets of an extremally order-disconnected Esakia space $X$, the join is computed as the closure of the union and the meet as the greatest upset contained in the interior of the intersection. If in addition $X$ is a BH-space, then the interior of an upset is an upset (see e.g. \cite[Lem.~3.6]{HB04}), and hence joins and meets are computed as follows (see, e.g., \cite[Thm.~3.8]{HB04}):
\begin{equation}\label{eq:bihjoinmeet}
\bigvee U_i = \Cl\left(\bigcup U_i\right) \mbox{ and } \bigwedge U_i = \Int\left(\bigcap U_i\right).
\end{equation}





\section{The Fine frame and the corresponding BH-space}



As we pointed out in the introduction, Fine \cite{Fin74a} constructed a Kripke incomplete normal extension of $\sfour$ utilizing the Kripke frame $\sF = (F,\leq)$ shown in Figure~\ref{f:fine frame}, which has since become known as the \emph{Fine frame}.

\begin{figure}[ht]
\[
\begin{tikzpicture}[scale=.7,inner sep=.2]
\node (p) at (0,0) {$\bullet$};
\node[red,right] at (p.north east) {$c_0$};
\node (q) at (-2,0) {$\bullet$};
\node[red,left] at (q.north west) {$b_0$};
\node (p1) at (0,-1) {$\bullet$};
\node[red,right] at (p1.north east) {$c_1$};
\node (q1) at (-2,-1) {$\bullet$};
\node[red,left] at (q1.north west) {$b_1$};
\node (p2) at (0,-2) {$\bullet$};
\node[red,right] at (p2.north east) {$c_2$};
\node (q2) at (-2,-2) {$\bullet$};
\node (p3) at (0,-3) {$\bullet$};
\node[red,right] at (p3.north east) {$c_3$};
\node (q3) at (-2,-3) {$\bullet$};
\node (p4) at (0,-4) {$\bullet$};
\node[red,right] at (p4.north east) {$c_4$};
\node (q4) at (-2,-4) {$\bullet$};
\node (a1) at (-12, -6) {$\bullet$};
\node[red,left] at (a1.west) {$a_0$};
\node (a2) at (-10, -6) {$\bullet$};
\node[red,left] at (a2.west) {$a_1$};
\node (a3) at (-8, -6) {$\bullet$};
\node[red,left] at (a3.west) {$a_2$};
\node (a4) at (-6, -6) {$\bullet$};
\node[red,left] at (a4.west) {$a_3$};
\node (d1) at (-1,-12) {$\bullet$};
\node[red,right] at (d1.east) {$d_0$};
\node (d2) at (-1,-11) {$\bullet$};
\node[red,right] at (d2.east) {$d_1$};
\node (d3) at (-1,-10) {$\bullet$};
\node[red,right] at (d3.east) {$d_2$};
\node (d4) at (-1,-9) {$\bullet$};
\node[red,right] at (d4.east) {$d_3$};
\node at (-1,-5.5) {$\vdots$};
\node at (-4,-6) {$\cdots$};
\node at (-1,-7.5) {$\vdots$};
\draw (p) -- (p1) -- (p2) -- (p3) -- (p4) -- (0,-4.8);
\draw (q) -- (q1) -- (q2) -- (q3) -- (q4) -- (-2,-4.8);
\draw (p) -- (q2) -- (p4) -- (-.8,-4.8);
\draw (p1) -- (q3) -- (-.2,-4.8);
\draw (q) -- (p2) -- (q4) -- (-1.2,-4.8);
\draw (q1) -- (p3) -- (-1.8,-4.8);
\draw (p1) -- (a1) -- (q1);
\draw (p2) -- (a2) -- (q2);
\draw (p3) -- (a3) -- (q3);
\draw (p4) -- (a4) -- (q4);
\draw (d1) -- (d2) -- (d3) -- (d4) -- (-1,-8.2);
\draw (a1) -- (d1);
\draw (a2) -- (d2);
\draw (a3) -- (d3);
\draw (a4) -- (d4);
\end{tikzpicture}
\]
\caption{The Fine frame.}\label{f:fine frame}
\end{figure}
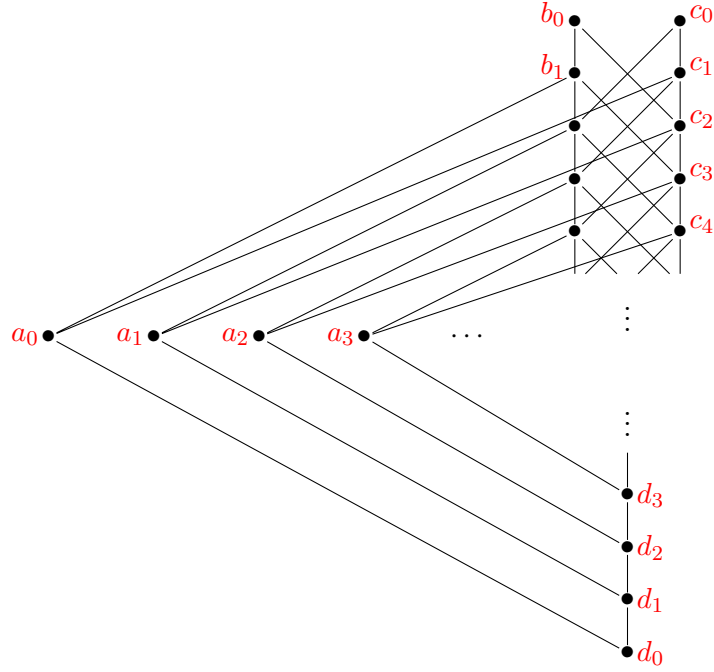
Shehtman \cite{She77} utilized the Fine frame to construct a Kripke incomplete intermediate logic. For this he introduced a particular  intuitionistic valuation on $\mathscr F$ assigning the upset $\{c_0\}$ to the propositional letter $p$ and the upset $\{b_0\}$ to $q$. These upsets generate a Heyting subalgebra inside all upsets of $\mathscr F$. In this section we explicitly describe this subalgebra using Esakia duality, and show that it is in fact a bi-Heyting algebra.

We let
\begin{eqnarray*}
A&=&\{a_i\mid i<\omega\},\\
B&=&\{b_i\mid i<\omega\},\\
C&=&\{c_i\mid i<\omega\},\\
D&=&\{d_i\mid i<\omega\}.
\end{eqnarray*}

We now define an Esakia space $X_{\mathscr F}$ based on the one-point compactification of $\mathscr F$.

\begin{definition}
Let $X_{\mathscr F} = \mathscr F\cup\{\infty\}$. The topology of $X_{\mathscr F}$ is defined in the standard way, where all elements of $\mathscr F$ are isolated and $\infty$ is the only limit point. The ordering on $X_{\mathscr F}$ is given by extending the ordering of $\mathscr F$ by putting $\infty\leq b_i, c_i$ and $d_i\leq\infty$ for all $i<\omega$, as shown in Figure~\ref{f:BH-space}.
\end{definition}

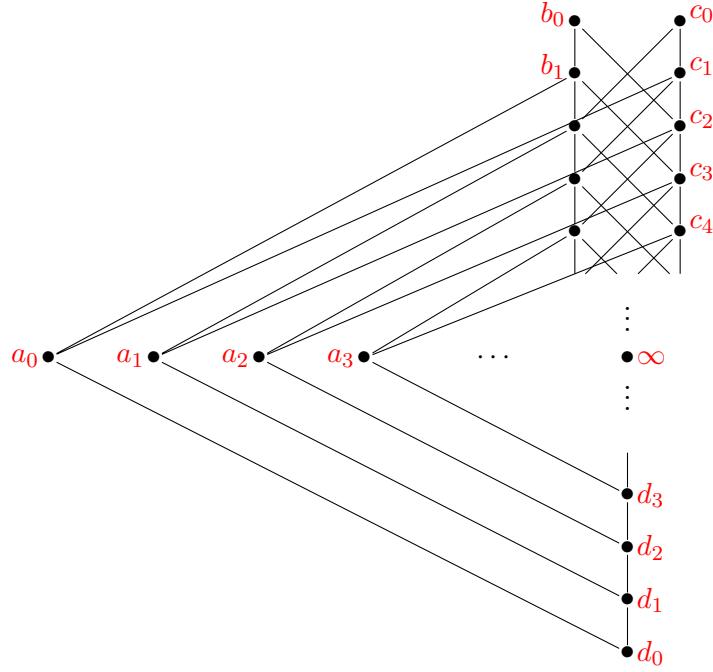
\begin{figure}[ht]
\[
\begin{tikzpicture}[scale=.7,inner sep=.2]
\node (p) at (0,0) {$\bullet$};
\node[red,right] at (p.north east) {$c_0$};
\node (q) at (-2,0) {$\bullet$};
\node[red,left] at (q.north west) {$b_0$};
\node (p1) at (0,-1) {$\bullet$};
\node[red,right] at (p1.north east) {$c_1$};
\node (q1) at (-2,-1) {$\bullet$};
\node[red,left] at (q1.north west) {$b_1$};
\node (p2) at (0,-2) {$\bullet$};
\node[red,right] at (p2.north east) {$c_2$};
\node (q2) at (-2,-2) {$\bullet$};
\node (p3) at (0,-3) {$\bullet$};
\node[red,right] at (p3.north east) {$c_3$};
\node (q3) at (-2,-3) {$\bullet$};
\node (p4) at (0,-4) {$\bullet$};
\node[red,right] at (p4.north east) {$c_4$};
\node (q4) at (-2,-4) {$\bullet$};
\node (a1) at (-12, -6.4) {$\bullet$};
\node[red,left] at (a1.west) {$a_0$};
\node (a2) at (-10, -6.4) {$\bullet$};
\node[red,left] at (a2.west) {$a_1$};
\node (a3) at (-8, -6.4) {$\bullet$};
\node[red,left] at (a3.west) {$a_2$};
\node (a4) at (-6, -6.4) {$\bullet$};
\node[red,left] at (a4.west) {$a_3$};
\node (infty) at (-1,-6.4) {$\bullet$};
\node[red,right] at (infty.east) {$\infty$};
\node (d1) at (-1,-12) {$\bullet$};
\node[red,right] at (d1.east) {$d_0$};
\node (d2) at (-1,-11) {$\bullet$};
\node[red,right] at (d2.east) {$d_1$};
\node (d3) at (-1,-10) {$\bullet$};
\node[red,right] at (d3.east) {$d_2$};
\node (d4) at (-1,-9) {$\bullet$};
\node[red,right] at (d4.east) {$d_3$};
\node at (-1,-5.5) {$\vdots$};
\node at (-3.5,-6.4) {$\cdots$};
\node at (-1,-7) {$\vdots$};
\draw (p) -- (p1) -- (p2) -- (p3) -- (p4) -- (0,-4.8);
\draw (q) -- (q1) -- (q2) -- (q3) -- (q4) -- (-2,-4.8);
\draw (p) -- (q2) -- (p4) -- (-.8,-4.8);
\draw (p1) -- (q3) -- (-.2,-4.8);
\draw (q) -- (p2) -- (q4) -- (-1.2,-4.8);
\draw (q1) -- (p3) -- (-1.8,-4.8);
\draw (p1) -- (a1) -- (q1);
\draw (p2) -- (a2) -- (q2);
\draw (p3) -- (a3) -- (q3);
\draw (p4) -- (a4) -- (q4);
\draw (d1) -- (d2) -- (d3) -- (d4) -- (-1,-8.2);
\draw (a1) -- (d1);
\draw (a2) -- (d2);
\draw (a3) -- (d3);
\draw (a4) -- (d4);
\end{tikzpicture}
\]
\caption{The one-point compactification of the Fine frame.}\label{f:BH-space}
\end{figure}

\begin{proposition}\label{prop:FisBH}
The ordered space $X_{\mathscr F}$ is a BH-space.
\end{proposition}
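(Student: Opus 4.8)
The plan is to check, one clause at a time, that $X_{\sF}$ satisfies the definition of a BH-space: that the underlying space is Stone, that $\le$ is a partial order, that the two Esakia continuity conditions hold, and finally that $\up U$ is clopen whenever $U$ is clopen. The whole argument is driven by a single computation of the principal up- and down-sets, so I would carry that out first and reuse it throughout. I would begin by describing the clopen algebra: since $X_{\sF}$ is the one-point compactification of the countable discrete space $\sF$, a subset is open iff it is an arbitrary subset of $\sF$ or a cofinite set containing $\infty$, and dually a subset is closed iff it is a finite subset of $\sF$ or a set containing $\infty$. Hence a set is clopen iff it is a finite subset of $\sF$ or a cofinite set containing $\infty$, and these clopen sets form a basis, so $(X_{\sF},\tau)$ is compact, Hausdorff, and zero-dimensional, i.e.\ a Stone space.

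Next I would read off the up- and down-sets from Figure~\ref{f:BH-space}. The cross-edge pattern gives $c_i\ge b_{i+2}$ and $b_i\ge c_{i+2}$, together with $a_i\le b_{i+1},c_{i+1}$, $d_i\le a_i$, and the chain $d_i\le d_{i+1}$. From this one computes that $\up c_i$, $\up b_i$, and $\up a_i$ are finite, while $\up d_i$ omits only finitely many points of $\sF$; the crucial consequence is that $B\cup C\subseteq\up d_i$ for every $i$, i.e.\ each $d_i$ lies below every $b_j$ and every $c_j$. Granting this, checking that the extended relation is a partial order reduces to transitivity through $\infty$: a chain $d_i\le\infty\le b_j$ (or $\le c_j$) is witnessed by $d_i\le b_j$ already holding in $\sF$, while a chain $\infty\le y\le z$ stays inside $B\cup C=\up\infty\setminus\{\infty\}$. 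Reflexivity and antisymmetry are immediate, the latter because nothing above $\infty$ returns below it.

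For the first Esakia condition I would note that $\up x$ is finite (hence closed) when $x\in A\cup B\cup C$, and contains $\infty$ (hence is closed) when $x\in D\cup\{\infty\}$. For the second Esakia condition and the BH condition simultaneously, the key observation is that for every $x\in\sF$ both $\up x$ and $\dn x$ are clopen: the finite ones are clopen as finite subsets of $\sF$, and the cofinite ones (namely $\up d_i$, $\dn b_i$, $\dn c_i$) are clopen because they are cofinite and contain $\infty$. Given this, for a finite clopen $U\subseteq\sF$ the sets $\dn U$ and $\up U$ are finite unions of clopens, hence clopen; and for a cofinite clopen $U$ containing $\infty$, since $U$ then contains some $c_i$ and some $d_i$, we have $\dn c_i\subseteq\dn U$ and $\up d_i\subseteq\up U$, so both $\dn U$ and $\up U$ are cofinite and contain $\infty$, hence clopen. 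Note that neither $\dn\infty$ nor $\up\infty$ is itself clopen, which is why it matters that $\infty$ only ever occurs inside a \emph{cofinite} clopen set.

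The main obstacle is the up/down-set computation, and within it the single fact $B\cup C\subseteq\up d_i$: this is exactly what makes the new comparabilities with $\infty$ transitive and what forces $\dn c_i$, $\dn b_i$, and $\up d_i$ to be cofinite, so that the two clopen-preservation conditions come out. Once the combinatorics of the cross-edges is pinned down, everything else is routine bookkeeping against the explicit description of the clopen sets.
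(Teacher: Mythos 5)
Your proof is correct and follows essentially the same route as the paper: identify the clopen sets of the one-point compactification, check that principal upsets are closed, and then verify clopen-preservation of $\up$ and $\dn$ by a case analysis on clopen $U$. The only difference is cosmetic — you split into finite vs.\ cofinite clopens and make the principal up/down-set computations (in particular $B\cup C\subseteq\up d_i$) explicit, whereas the paper splits on whether $U$ meets $B\cup C$ (resp.\ $D$) and leaves those same combinatorial facts implicit as ``straightforward.''
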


\begin{proof}
It is well known that $X_{\mathscr F}$ is a Stone space, with its clopen sets being finite subsets of $\mathscr F$ and cofinite subsets containing $\infty$. It is also straightforward that $\leq$ is a partial order.

To see that $X_{\mathscr F}$ is a BH-space, we first observe that $\up x$ is closed for each $x\in X_{\mathscr F}$. Note that $\up x$ is finite (and hence closed) unless $x\in D\cup\{\infty\}$, in which case $\infty\in\up x$ which also implies that $\up x$ is closed.

Let next $U\subseteq X_{\mathscr F}$ be an arbitrary clopen. If $U\cap(B\cup C)\neq\varnothing$, then $\dn U$ is a cofinite set containing $\infty$, so it is clopen. Otherwise $\infty\notin U$. Therefore, $U$ is a finite subset of $D\cup A$, so $\dn U$ is finite, and hence clopen. The case of $\up U$ is treated similarly
depending on whether or not $U\cap D$ is empty. Thus, both $\up U$ and $\dn U$ are clopen, yielding that $X_{\mathscr F}$ is a BH-space.
\end{proof}

\begin{definition}
Let $\fA$ be the algebra of clopen upsets of $X_{\mathscr F}$.
\end{definition}

As an immediate consequence of Theorem \ref{thm:BH} and Proposition \ref{prop:FisBH} we obtain:

\begin{corollary}\label{cor:biH}
$\fA$ is a bi-Heyting algebra.
\end{corollary}

\begin{remark}
The algebra $\fA$ is not complete since $X_{\mathscr F}$ fails to be extremally order-dis\-con\-nec\-ted. Indeed, $B\cup C$ is an open upset, but its closure is $B\cup C\cup\{\infty\}$ which is not open.
\end{remark}

Consider the following upsets of $\mathscr F$:

\

\begin{tabular}{llll}
$B_n:=-\dn b_n$,& $C_n:=-\dn c_n$,& $A_n:=-\dn a_n $,& $D_n:=-\dn d_n $;\\
$B_n':=\up b_n$,& $C_n':=\up c_n$,& $A_n':=\up a_n$,& $D_n':=\up d_n$.
\end{tabular}

\

The next lemma can be proved by a straightforward calculation, so we omit its proof.

\begin{lemma}\label{lem:ABCD}
In $\Up(\mathscr F)$ we have 
\begin{align*}
B_0&=C'_1=B_0'\to C_0',\\
C_0&=B'_1=C_0'\to B_0',\\
B_1&=C_2' 
 = C_0 \to (B_0\cup B_0'), \\
C_1&=B_2' 
 = B_0 \to (C_0 \cup C_0').
\intertext{Moreover, for all $n\ge0$,}
B_{n+2}&=C_{n+1}\to(B_{n+1}\cup C_n),\\
C_{n+2}&=B_{n+1}\to (C_{n+1}\cup B_n),\\
A_n&=(B_{n+2}\cap C_{n+2})\to(B_{n+1}\cup C_{n+1}),\\
D_n&=A_n\cup A_{n+1}
\intertext{and}
B'_{n+2}&=C_{n+1}\cap A_n,\\
C'_{n+2}&=B_{n+1}\cap A_n,\\
D'_n&=\bigcap_{i<n}A_i,\\
A'_n&=D'_n\cap B_{n+2}\cap C_{n+2}.
\end{align*}
\end{lemma}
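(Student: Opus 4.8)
The plan is to verify each of the stated identities by a direct computation in the Heyting algebra $\Up(\mathscr F)$, using the two operations that are available there: the implication $U\to V=\{x\mid \up x\cap U\subseteq V\}$ and the lattice operations given by set-theoretic union and intersection. Since every upset $S$ of $\mathscr F$ is an open set in the Alexandroff topology and the complement and $\dn$ operators are explicit, each right-hand side is a concrete subset of $F$ that I can compute pointwise. The strategy for a typical identity is to compute the right-hand side as a subset of $F$ and check that it equals the upset named on the left. Because all the sets involved are of the form $\up x$ or $-\dn x$ (complements of principal downsets), I would first tabulate $\up x$ and $\dn x$ for each type of point $a_i,b_i,c_i,d_i$ from the order relations visible in Figure~\ref{f:fine frame}, and record the covering relations explicitly (e.g.\ that $b_n$ covers $c_{n+1}$ and $a_{n-1}$, that $a_n$ sits below $b_n,c_n,b_{n+1},c_{n+1}$, and that the $d_i$ form an ascending chain above the $a_i$). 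This table is the real input to every line.

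For the implication-type identities, I would use the characterization $x\in U\to V$ iff every point above $x$ that lies in $U$ also lies in $V$. So, for instance, to check $B_0=B_0'\to C_0'$ I would argue that a point $x$ fails to be in $B_0=-\dn b_0$ exactly when $x\le b_0$, and separately that $x$ fails $B_0'\to C_0'$ exactly when some point above $x$ lies in $\up b_0$ but not in $\up c_0$; I would then match these two failure conditions using the order data. The recursive identities such as $B_{n+2}=C_{n+1}\to(B_{n+1}\cup C_n)$ are handled the same way: compute $\up x\cap C_{n+1}$ and test containment in $B_{n+1}\cup C_n$, which amounts to checking, for each point $x$ of $\mathscr F$, a finite condition on the points above it. The meet/join identities $D_n=A_n\cup A_{n+1}$, $B'_{n+2}=C_{n+1}\cap A_n$, $D'_n=\bigcap_{i<n}A_i$, and $A'_n=D'_n\cap B_{n+2}\cap C_{n+2}$ are purely lattice-theoretic: here I would just intersect or union the explicitly described complements of downsets and simplify, using that $-\dn x\cap -\dn y=-(\dn x\cup\dn y)$.

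The main obstacle will be bookkeeping rather than conceptual difficulty: the order on $\mathscr F$ interleaves the $b$- and $c$-columns through the $a_i$ (each $a_i$ lies below both $b_i,c_i$ and $b_{i+1},c_{i+1}$, and the crossing edges link $c_0$ to $b_2$, $b_0$ to $c_2$, etc.), so one must be careful about exactly which points lie above a given $x$ when evaluating $\up x\cap U$. I would organize the verification by the type of the point being tested (whether it is an $a$, $b$, $c$, or $d$), since for each type the set $\up x$ has a uniform description, and then confirm that the membership condition for the left-hand upset agrees with the computed right-hand side on each type. Once the order table is fixed, each of the fourteen identities reduces to a short finite case check, which is why the lemma can be asserted as a straightforward calculation. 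I would present the table and one or two representative verifications, and indicate that the remaining cases are entirely analogous.
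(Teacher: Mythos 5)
Your method --- tabulate $\up x$ and $\dn x$ for every point of $\mathscr F$, then check each identity pointwise using $U\to V=\{x\mid \up x\cap U\subseteq V\}$ and the Boolean identities for unions and intersections of complements of principal downsets --- is precisely the ``straightforward calculation'' that the paper invokes when it omits the proof, so there is no disagreement about the approach. The difficulty is that the order table you propose to build, which you yourself identify as ``the real input to every line,'' misreads the Fine frame in two places. First, $b_n$ does not cover $c_{n+1}$: the crossing edges of Figure~\ref{f:fine frame} are $c_{n+2}\lessdot b_n$ and $b_{n+2}\lessdot c_n$. (You state this correctly later, writing that the crossing edges ``link $c_0$ to $b_2$, $b_0$ to $c_2$,'' so your two descriptions are inconsistent with each other.) The discrepancy is not cosmetic: the very first identity $B_0=C'_1$ encodes it. With the correct edges, $\dn b_0=B\cup\{c_j\mid j\ge 2\}\cup A\cup D$, so $B_0=\{c_0,c_1\}=\up c_1=C'_1$; with $c_1<b_0$ one would instead get $B_0=\{c_0\}$ while $\up c_1$ would contain $b_0$, and the identity fails.

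Second, and nowhere corrected in your text, the $d_i$ do not form a chain \emph{above} the $a_i$: they form the ascending chain $d_0<d_1<d_2<\cdots$ lying \emph{below} the $a$-row, with $d_n\lessdot a_n$ and $d_n\lessdot d_{n+1}$, so that $\dn a_n=\{a_n,d_0,\dots,d_n\}$ and $\dn d_n=\{d_0,\dots,d_n\}$. Every line of the lemma involving the $D$'s depends on exactly this configuration: for instance $A_n\cup A_{n+1}=-(\dn a_n\cap\dn a_{n+1})=-\{d_0,\dots,d_n\}=-\dn d_n=D_n$, and $\bigcap_{i<n}A_i=-\bigcup_{i<n}\dn a_i=-\{a_i,d_i\mid i<n\}=\up d_n=D'_n$, whence also the formula for $A'_n$. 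If the $d_i$ sat above the $a_i$, these computations would come out differently and the corresponding identities would be false. So the plan is the right one, but before presenting your table and ``representative verifications'' you must fix these two entries; carried out as written, your calculation would refute several of the identities rather than prove them.
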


\begin{theorem}
The algebra $\fA$ is isomorphic to the subalgebra $\fB$ of $\Up(\mathscr F)$ generated by $\{\{b_0\},\{c_0\}\}$.
\end{theorem}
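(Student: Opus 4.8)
The plan is to realise the isomorphism through the restriction map $\eta\colon\fA\to\Up(\mathscr F)$ sending a clopen upset $U$ of $X_{\mathscr F}$ to $U\cap\mathscr F=U\setminus\{\infty\}$. The first thing to record is the shape of the clopen upsets of $X_{\mathscr F}$: by the description of the clopens in the proof of Proposition~\ref{prop:FisBH}, each clopen upset $U$ is either of type (a), a finite upset disjoint from $D$ (this happens when $\infty\notin U$, since $d_i\le\infty$ forces $\infty\in U$ as soon as $U$ meets $D$), or of type (b), of the form $U'\cup\{\infty\}$ with $U'$ a cofinite upset containing $B\cup C$ (this happens when $\infty\in U$, since $\infty\le b_i,c_i$). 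Consequently $\eta$ is well defined, because the restriction to $\mathscr F$ of an upset of $X_{\mathscr F}$ is again an upset, and it is injective, since the only datum $\eta$ forgets is whether $\infty\in U$, and this is recoverable: $\eta(U)$ is finite in case (a) and cofinite in case (b).

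Next I would show that $\eta$ is a homomorphism of bi-Heyting algebras. Preservation of $\cap$, $\cup$, and the bounds is immediate, as these are computed set-theoretically in both $\fA$ and $\Up(\mathscr F)$ and $\eta$ is a restriction. The substance, and the main obstacle, is the implication and the coimplication: the order of $X_{\mathscr F}$ carries the extra relations $\infty\le b_i,c_i$ and $d_i\le\infty$, so a priori $U\to V$ and $U\leftarrow V$ computed in $X_{\mathscr F}$ need not restrict to the corresponding operations in $\mathscr F$. The key observation is that for $x\in\mathscr F$ the upset $\up x$ taken in $X_{\mathscr F}$ equals $\up x$ taken in $\mathscr F$ whenever $x\in A\cup B\cup C$, while for $x\in D$ the two differ exactly by $\{\infty\}$. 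Using the Kripke formula $U\to V=\{x\mid\up x\cap U\subseteq V\}$, the two computations can diverge only at points $x\in D$ and only when $\infty\in U$ but $\infty\notin V$; but then $U$ is cofinite of type (b) and $V$ is finite of type (a), while $\up x\supseteq B\cup C$ is infinite for $x\in D$, so $\up x\cap U\not\subseteq V$ and both conditions fail at every such $x$, and the outputs agree. A symmetric argument handles $U\leftarrow V=\up(V\setminus U)$: the only possible discrepancy is the extra set $\up\infty\cap\mathscr F=B\cup C$ contributed when $\infty\in V\setminus U$, and in that case $V\setminus U$ already contains all but finitely many points of $B\cup C$, whose upward closure in $\mathscr F$ is all of $B\cup C$ (each $\up b_n$ contains $b_0,\dots,b_n$, and dually for the $c_i$); hence the discrepancy vanishes.

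It remains to identify the image of $\eta$ with $\fB$. For the inclusion $\fB\subseteq\eta(\fA)$, note that $\{b_0\}=B_0'$ and $\{c_0\}=C_0'$ are finite upsets disjoint from $D$, hence clopen upsets of $X_{\mathscr F}$, so the generators lie in $\eta(\fA)$; since $\eta(\fA)$ is a subalgebra, it contains $\fB$. For the reverse inclusion I would use Lemma~\ref{lem:ABCD}. A clopen upset of type (a) is a finite union of principal upsets $\up x$ with $x\in A\cup B\cup C$, each equal to one of $A_n'$, $B_n'$, $C_n'$; a clopen upset of type (b) has finite complement contained in $A\cup D$, hence equals a finite intersection $\bigcap_s(-\dn s)$ with each $-\dn s$ equal to one of $A_n$, $D_n$. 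By Lemma~\ref{lem:ABCD} all of $A_n'$, $B_n'$, $C_n'$, $A_n$, $D_n$ belong to $\fB$, and $\fB$ is closed under finite unions and intersections, so $\eta(U)\in\fB$ in either case. Thus $\eta(\fA)=\fB$, and $\eta$ corestricts to an isomorphism $\fA\cong\fB$; note in passing that this also shows the Heyting subalgebra generated by the two elements is already closed under $\leftarrow$ and hence coincides with $\fB$.
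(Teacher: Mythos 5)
Your proof is correct and takes essentially the same route as the paper: the same restriction map $U\mapsto U\cap\mathscr F$, injectivity via the finite/cofinite dichotomy at $\infty$, and the same identification of the image with $\fB$ through Lemma~\ref{lem:ABCD}, splitting on whether $\infty\in U$. The one difference is level of detail rather than approach: where the paper only records that the restriction is an order-embedding (leaving implicit that its image is closed under the operations, which is what the inclusion $\fB\subseteq f(\fA)$ actually requires), you verify preservation of $\to$ and $\leftarrow$ explicitly — a careful filling-in of that implicit step, not a different argument.
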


\begin{proof}
Define $f:\fA\to\Up(\mathscr F)$ by $f(U) = U\cap\mathscr F$ for each clopen upset $U\subseteq X_{\mathscr F}$. It is obvious that $f$ is well defined. It is also not difficult to verify that $U\subseteq V$ iff $f(U)\subseteq f(V)$ for clopen $U,V$. This follows from ${\mathscr F}$ being a dense subset of $X_{\mathscr F}$.

We show that $\fB\subseteq f(\fA)$. We have that $\{b_0\}$, $\{c_0\}$ are clopen upsets of $X_{\mathscr F}$ and that $f(\{b_0\})=\{b_0\}, f(\{c_0\})=\{c_0\}$. Therefore, $\{b_0\},\{c_0\}\in f(\fA)$. Since $\fB$ is generated by $\{\{b_0\},\{c_0\}\}$, it follows that $\fB\subseteq f(\fA)$.

For the reverse inclusion, let $U$ be a clopen upset of $X_{\mathscr F}$.
If $\infty\notin U$, then $U$ contains a finite number of the $a_i, b_i, c_i$ and does not contain any of the $d_i$. Therefore, $f(U)=U$ is a finite union of the $A'_i$, $B'_i$, $C'_i$. On the other hand, if $\infty\in U$, then $f(U)=U\cap\mathscr F$ is cofinite, hence $U$ does not contain only a finite number of the $a_i$ and $d_i$ and contains all the $b_i$, $c_i$. Thus, $U$ is a finite intersection of the $A_i$ and $D_i$. But Lemma~\ref{lem:ABCD} yields that each of the above sets is an element of $\fB$, giving the reverse inclusion. Thus, $f:\fA\to\fB$ is an isomorphism.
\end{proof}

Let $\mathsf{Form}_{\IPC}$ be the set of formulas in the propositional language for $\IPC$ and let $\mathsf{Form}_{\HB}$ be the set of formulas in the propositional language for $\HB$.

\begin{definition}
\begin{enumerate}
\item[]
\item The \emph{Fine logic} is the intermediate logic
\[
\Fi=\{\varphi \in \mathsf{Form}_{\IPC} \mid \fA\vDash\varphi\}.
\]
\item Let $\mathcal V(\Fi)$ be the variety of Heyting algebras generated by $\fA$.
\item The HB-counterpart of $\Fi$ is the extension of $\HB$
\[
\F=\{\varphi \in \mathsf{Form}_{\HB} \mid \fA\vDash\varphi\}.
\]
We call $\F$ the {\em Fine logic over} $\HB$.
\item Let $\mathcal V(\F)$ be the variety of bi-Heyting algebras generated by $\fA$.
\end{enumerate}
\end{definition}

\begin{remark}
Since $\fA$ is a bi-Heyting algebra, the variety $\mathcal V(\Fi)$ is generated by bi-Heyting algebras. Consequently, $\F$ is conservative over $\Fi$, meaning that for each $\varphi\in\mathsf{Form}_{\IPC}$, we have $\Fi\vdash\varphi$ iff $\F\vdash\varphi$.
\end{remark}

\section{The Shehtman logic}\label{sec:Shehtman}

In \cite{She77} Shehtman considered certain formulas valid in the Fine frame and used them to define his Kripke incomplete logic. We already encountered these formulas in Lemma~\ref{lem:ABCD}. We follow Litak \cite{Lit02} in denoting these formulas by Greek letters.


\begin{definition}\label{def:abcd}
Starting with propositional variables $p$, $q$ we define
\[
\bg_0=q\to p,\qquad \cg_0=p\to q, 
\]
\[
\bg_1=\cg_0\to(\bg_0\lor q),\qquad \cg_1=\bg_0\to(\cg_0\lor p),
\]
and for $n\ge 0$,
\[
\bg_{n+2}=\cg_{n+1}\to(\bg_{n+1}\lor\cg_n),\qquad\cg_{n+2}=\bg_{n+1}\to(\cg_{n+1}\lor\bg_n).
\]

Furthermore, for $n\ge0$ let
\begin{eqnarray*}
\ag_n &=& (\bg_{n+2}\land\cg_{n+2})\to(\bg_{n+1}\lor\cg_{n+1}), \\
\dg_n &=& \ag_n\lor\ag_{n+1}, \\
\kg_n &=& \ag_{n+1}\to(\ag_n\lor\bg_{n+2}).
\end{eqnarray*}
\end{definition}


\begin{remark}
Compare Definition \ref{def:abcd} to Lemma \ref{lem:ABCD}.
\end{remark}


We next recall the formula $\bb_2$ of Gabbay and de Jongh \cite{GdJ74}.

\begin{definition}
Let
\begin{eqnarray*}
\bb_2 &=& [(p\to(q\lor r))\to(q\lor r)]\land \\
 & & [(q\to(p\lor r))\to(p\lor r)]\land \\
 & & [(r\to(p\lor q))\to(p\lor q)] \to (p\lor q\lor r)
\end{eqnarray*}
\end{definition}

\begin{definition}
\begin{enumerate}
\item[]
\item The \emph{Shehtman logic} is the intermediate logic
\[
\She=\IPC + [(\ag_0\to\dg_1)\to\dg_0] + \kg_0 + \bb_2.
\]
\item Let $\mathcal V(\She)$ be the variety of Heyting algebras corresponding to $\She$.
\item The HB-counterpart of $\She$ is the extension of $\HB$
\[
\Sh=\HB + [(\ag_0\to\dg_1)\to\dg_0] + \kg_0 + \bb_2.
\]
We call $\Sh$ the {\em Shehtman logic over} $\HB$.
\item Let $\mathcal V(\Sh)$ be the variety of bi-Heyting algebras corresponding to $\Sh$.
\end{enumerate}
\end{definition}

\begin{remark}
Unlike for the variety $\mathcal V(\Fi)$ corresponding to the Fine logic $\Fi$, it is unclear whether $\mathcal V(\She)$ is generated by bi-Heyting algebras. Thus, it is unclear whether $\Sh$ is conservative over~$\She$.
\end{remark}


The next theorem follows from Shehtman \cite[Lem.~3 and 5]{She77}.

\begin{theorem}\label{thm:ASnod0}
The algebra $\fA$ validates $\She$ and refutes $\delta_0$.
\end{theorem}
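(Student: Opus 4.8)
The plan is to transport everything to the concrete copy $\fB\le\Up(\mathscr F)$ supplied by the isomorphism theorem, so that ``$\fA$ validates $\varphi$'' means that $\varphi$ evaluates to the top upset $\mathscr F$ under \emph{every} assignment of members of $\fB$ to its variables. The link between the syntactic formulas of Definition~\ref{def:abcd} and the upsets of Lemma~\ref{lem:ABCD} is the canonical valuation $v$ with $v(p)=\{c_0\}=C_0'$ and $v(q)=\{b_0\}=B_0'$: comparing the two lists shows that $v$ carries $\bg_n,\cg_n,\ag_n,\dg_n$ to $B_n,C_n,A_n,D_n$ and $\kg_n$ to $A_{n+1}\to(A_n\cup B_{n+2})$. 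The refutation of $\dg_0$ is then immediate and I would dispatch it first: under $v$ we have $v(\dg_0)=D_0=-{\dn}d_0$ by Lemma~\ref{lem:ABCD}, and since $d_0\in{\dn}d_0$ this upset omits the point $d_0$, so $v(\dg_0)\ne\mathscr F$. Hence $\fB\not\vDash\dg_0$, and therefore $\fA\not\vDash\dg_0$.

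For validity, since $\fA$ is a Heyting algebra and validity is preserved under substitution and modus ponens, it suffices to check the three extra axioms $(\ag_0\to\dg_1)\to\dg_0$, $\kg_0$, and $\bb_2$ under an arbitrary valuation into $\fB$. The axiom $\bb_2$ is the most tractable: it expresses a bounded-branching property that $\mathscr F$ enjoys, so $\bb_2$ is valid on the whole frame $\Up(\mathscr F)$ and \emph{a fortiori} on the subalgebra $\fB$. The two Shehtman-specific axioms are the heart of the matter, and here I would exploit the structural dichotomy extracted in the proof of the isomorphism theorem: every member of $\fB$ is either a finite union of the upsets $A_i',B_i',C_i'$ (those not containing $\infty$) or a cofinite finite intersection of the $A_i,D_i$ (those containing $\infty$). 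Substituting elements of these two types for $p$ and $q$ and simplifying the composite formulas $\ag_n,\dg_n,\kg_n$ by means of the recursions of Lemma~\ref{lem:ABCD} reduces each verification to finitely many cases; this is precisely the content of Shehtman's Lemmas~3 and~5.

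The main obstacle is exactly this final case analysis, and it is essential rather than cosmetic. The canonical valuation already exhibits $\dg_0$ as refuted on the full frame $\mathscr F$, so $\mathscr F$ cannot frame-validate $\She$; consequently at least one of the Shehtman axioms fails on some arbitrary-upset valuation of $\Up(\mathscr F)$, and the verification genuinely cannot be reduced to a first-order frame condition. One must therefore use, at each step, the severe restriction that only members of the two-generated subalgebra $\fB$---not arbitrary upsets---are admissible values of the variables. Tracking which of the two element-types arises for each variable, and how the implications between them collapse (again via Lemma~\ref{lem:ABCD}), is where the real work lies, and it is what forces the argument to be carried out at the level of $\fB$ rather than of $\mathscr F$.
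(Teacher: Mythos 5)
Your refutation of $\dg_0$ is correct and coincides with the paper's own argument (given in the remark following the theorem): the canonical valuation $\sV(p)=\{c_0\}$, $\sV(q)=\{b_0\}$ sends $\dg_0$ to $D_0=-\dn d_0\neq\mathscr F$. The validity half, however, contains a genuine error at the one point where you claim a shortcut: it is \emph{false} that $\bb_2$ is valid on the full frame $\Up(\mathscr F)$. The equivalence ``branching at most $n$ iff $\bb_n$ is frame-valid'' holds for finite frames only, and the Fine frame is itself a witness to its failure in the infinite case. Indeed, split $A$ into three infinite pieces $A^{(i)}=\{a_n\mid n\equiv i\ (\mathrm{mod}\ 3)\}$ and put
\[
\sV(p)=B\cup C\cup A^{(0)},\qquad \sV(q)=B\cup C\cup A^{(1)},\qquad \sV(r)=B\cup C\cup A^{(2)}.
\]
These are upsets of $\mathscr F$. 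Since $\up x\subseteq B\cup C$ for $x\in B\cup C$, since $\up a_n\cap A=\{a_n\}$, and since $\up d_m\cap A=\{a_n\mid n\ge m\}$ meets all three pieces, one computes $\sV(p\to(q\lor r))=\sV(q\lor r)$, so the first conjunct of the antecedent of $\bb_2$ evaluates to $\mathscr F$, and likewise for the other two conjuncts; but $\sV(p\lor q\lor r)=A\cup B\cup C$ omits every $d_m$. Hence $\bb_2$ is refuted (at $d_0$) in $\Up(\mathscr F)$, and your ``a fortiori'' inference down to the subalgebra $\fB$ starts from a false premise.

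The error is not cosmetic, because it inverts where the difficulty actually lies. The refuting upsets above are infinite and co-infinite, hence---by the very dichotomy you quote from the isomorphism theorem---they do not belong to $\fB$: the validity of $\bb_2$ on $\fA\cong\fB$ depends essentially on the restriction to finite-or-cofinite upsets, exactly as for the other two axioms, and cannot be obtained from any frame property of $\mathscr F$. (This is, in fact, the engine of the whole incompleteness phenomenon.) So after the correct refutation of $\dg_0$, what survives of your proposal is a reduction of the validity claim to a case analysis over the two element-types of $\fB$ that you do not carry out. Note that the paper does not carry it out either: its ``proof'' of this theorem is a citation of Shehtman's Lemmas 3 and 5, plus the remark giving the $\dg_0$ computation. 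Your write-up should therefore either cite Shehtman uniformly for all three axioms or genuinely perform the verification; as it stands, the one axiom you claim to dispose of cheaply is precisely the one your argument gets wrong.
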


Since $\fA$ is a bi-Heyting algebra (see Corollary \ref{cor:biH}), we obtain:

\begin{corollary}\label{cor:ASnod0}
The algebra $\fA$ validates $\Sh$ and refutes $\delta_0$.
\end{corollary}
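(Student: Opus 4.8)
The plan is to deduce the statement directly from Theorem~\ref{thm:ASnod0} together with the fact, recorded in Corollary~\ref{cor:biH}, that $\fA$ is a bi-Heyting algebra. First I would observe that $\Sh$ and $\She$ have the \emph{same} three proper axioms over their respective base logics: $\Sh = \HB + [(\ag_0\to\dg_1)\to\dg_0] + \kg_0 + \bb_2$ and $\She = \IPC + [(\ag_0\to\dg_1)\to\dg_0] + \kg_0 + \bb_2$. Crucially, each of these three axioms lies in $\mathsf{Form}_{\IPC}$, since the formulas $\bg_n,\cg_n,\ag_n,\dg_n,\kg_n$ and $\bb_2$ are built using only $\land,\lor,\to$ and do not mention the coimplication connective $\leftarrow$. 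Hence the validity of any of them in $\fA$ depends only on the Heyting reduct of $\fA$, and coincides with its validity when $\fA$ is regarded as a model of the intermediate logic $\She$.

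The argument then proceeds in two halves. Since $\fA$ is a bi-Heyting algebra, it belongs to the variety of all bi-Heyting algebras, and as these are precisely the algebraic models of $\HB$, it follows that $\fA$ validates every theorem of $\HB$. On the other hand, by Theorem~\ref{thm:ASnod0}, $\fA$ validates $\She$, and in particular it validates the three $\IPC$-axioms $(\ag_0\to\dg_1)\to\dg_0$, $\kg_0$, and $\bb_2$. Because validation by an algebra is preserved under substitution, modus ponens, and the rule $\varphi/\neg{\sim}\varphi$, it follows that $\fA$ validates the whole of $\Sh = \HB + [(\ag_0\to\dg_1)\to\dg_0] + \kg_0 + \bb_2$. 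Finally, $\delta_0$ is itself an $\IPC$-formula, so the refutation of $\delta_0$ furnished by Theorem~\ref{thm:ASnod0} carries over verbatim to $\fA$ viewed as a bi-Heyting algebra.

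There is no substantial obstacle here, as the statement is essentially a repackaging of Theorem~\ref{thm:ASnod0} in the enriched signature. The only point requiring genuine care is the observation that the extra axioms of $\Sh$ over $\HB$ are $\leftarrow$-free, so that their validity in $\fA$ is inherited from $\fA\vDash\She$ rather than demanding a separate verification involving coimplication; once this is noted, the two halves of the argument combine immediately.
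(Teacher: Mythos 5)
Your proposal is correct and follows essentially the same route as the paper, which derives the corollary immediately from Theorem~\ref{thm:ASnod0} and the fact (Corollary~\ref{cor:biH}) that $\fA$ is a bi-Heyting algebra. Your write-up merely makes explicit the points the paper leaves implicit: that the extra axioms of $\Sh$ over $\HB$ are $\leftarrow$-free, so their validity in $\fA$ is inherited from $\fA\vDash\She$, and that bi-Heyting algebras validate $\HB$ with validity closed under the rules.
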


\begin{remark}
For the reader's convenience, we illustrate how to refute $\delta_0$ in $\fA$. Consider the valuation $\sV(p)=\{c_0\}$, $\sV(q)=\{b_0\}$ 
and note that for $n\ge 0$ we have:
\begin{align*}
\sV(\bg_n)&=-\dn\{b_n\},\\
\sV(\cg_n)&=-\dn\{c_n\},\\
\sV(\ag_n)&=-\dn\{a_n\},\\
\sV(\dg_n)&=-\dn\{d_n\}.
\end{align*}
Therefore, $\sV(\dg_0)=-\dn\{d_0\}\neq X_{\mathscr F}$.
\end{remark}

\begin{corollary}\
\begin{enumerate}
\item $\She\subseteq\Fi$ and $\delta_0\notin \She$. 
\item $\Sh\subseteq\F$ and $\delta_0\notin \Sh$. 
\end{enumerate}
\end{corollary}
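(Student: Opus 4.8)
The plan is to derive both statements directly from Corollaries~\ref{cor:ASnod0} and \ref{cor:biH} together with the definitions of the four logics. Recall that $\Fi$ and $\F$ are defined as the sets of $\IPC$- and $\HB$-formulas, respectively, that are valid on the single algebra $\fA$, whereas $\She$ and $\Sh$ are the deductively closed extensions axiomatized by the explicit schemata $(\ag_0\to\dg_1)\to\dg_0$, $\kg_0$, and $\bb_2$ over $\IPC$ and $\HB$.

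For part (1), I would first establish $\She\subseteq\Fi$. By Theorem~\ref{thm:ASnod0}, the algebra $\fA$ validates every axiom of $\She$. Since $\Fi$ is by definition the set of all intuitionistic formulas valid on $\fA$, and since the class of formulas valid on a fixed algebra is closed under substitution, modus ponens, and the other inference rules of $\IPC$, it follows that every theorem of $\She$ is valid on $\fA$, i.e. belongs to $\Fi$. For the second half, $\dg_0\notin\She$ follows immediately: Theorem~\ref{thm:ASnod0} tells us that $\fA$ refutes $\dg_0$, so $\dg_0\notin\Fi$, and since $\She\subseteq\Fi$ we get $\dg_0\notin\She$ as well.

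Part (2) runs in exact parallel, now using the $\HB$-language versions. By Corollary~\ref{cor:ASnod0}, $\fA$ validates $\Sh$ and refutes $\dg_0$; here the crucial point, supplied by Corollary~\ref{cor:biH}, is that $\fA$ is genuinely a bi-Heyting algebra, so the coimplication operation is interpreted and validity of the full $\HB$-axioms (including closure under the rule $\ph/\neg{\sim}\ph$) makes sense on $\fA$. As before, since $\F$ consists of all $\HB$-formulas valid on $\fA$ and the validity set is closed under the inference rules of $\HB$, validity of the axioms of $\Sh$ on $\fA$ yields $\Sh\subseteq\F$. The refutation of $\dg_0$ on $\fA$ gives $\dg_0\notin\F$, and hence $\dg_0\notin\Sh$.

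The argument is essentially bookkeeping, so there is no serious obstacle; the only point requiring a little care is the soundness observation that the set of formulas valid on a single algebra is itself closed under the deduction rules of the ambient calculus (substitution, modus ponens, and in the $\HB$-case the extra rule $\ph/\neg{\sim}\ph$). This is standard algebraic soundness, but it is what licenses passing from "the axioms of $\Sh$ hold on $\fA$" to "all theorems of $\Sh$ hold on $\fA$". Given this, both the inclusions and the non-derivability of $\dg_0$ are immediate, and the corollary can be stated with only a one-line proof referring back to Theorem~\ref{thm:ASnod0} and Corollary~\ref{cor:ASnod0}.
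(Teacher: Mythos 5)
Your proposal is correct and matches the paper's (implicit) reasoning exactly: the paper states this corollary without proof, precisely because it follows immediately from Theorem~3.11/Corollary~4.6 (that $\fA$ validates $\She$, resp.\ $\Sh$, and refutes $\delta_0$) together with the definitions of $\Fi$ and $\F$ as the sets of formulas valid on $\fA$. Your added remark about soundness---that the validity set of a fixed (bi-)Heyting algebra is closed under the inference rules, which licenses passing from axioms to all theorems---is the same standard observation the paper takes for granted.
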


\begin{remark}\label{rem:continuum}
Litak \cite{Lit02} proved that the interval $[\She,\Fi]$ contains continuum many intermediate logics. As an immediate corollary we obtain that the interval $[\Sh,\F]$ also contains continuum many HB-logics.
\end{remark}


\section{Semantic characterizations of \texorpdfstring{$\bb_2$}{bb2}}

In this section we provide algebraic and order-topological characterizations of the axiom $\bb_2$. Let $\fH$ be a Heyting algebra and $b\in\fH$. We recall that $b$ is \emph{dense} if $b\to 0=0$ and $b$ is \emph{regular} if $(b\to 0)\to 0=b$. We next relativize these two notions. Let $a,b\in\fH$ with $a\le b$. We say that \emph{$b$ is dense over $a$} if $b\to a=a$ and \emph{$b$ is regular over $a$} if $(b\to a)\to a=b$.

Using the standard shorthand notation
\[
w_a(b):=(b\to a)\to a,
\]
we have that $b$ is dense over $a$ iff $w_a(b)=1$ and $b$ is regular over $a$ iff $w_a(b)=b$. The next lemma is then immediate.


\begin{lemma}\label{l:bb2-H}
Let $\fH$ be a Heyting algebra. Then $\fH\models\bb_2$ iff for all $a,b,c\in\fH$ we have
\[
w_{a\lor b}(c)\land w_{a\lor c}(b)\land w_{b\lor c}(a)=a\lor b\lor c.
\]
\end{lemma}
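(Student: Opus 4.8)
The plan is to unfold the definition of validity of $\bb_2$ in a Heyting algebra and to match it against the claimed identity term by term. Recall that $\fH\models\bb_2$ means that for every valuation, i.e.\ for every assignment of elements $a,b,c\in\fH$ to the variables $p,q,r$, the element of $\fH$ obtained by evaluating the formula $\bb_2$ equals the top element $1$. Since $\bb_2$ has the shape $(\theta_1\land\theta_2\land\theta_3)\to(p\lor q\lor r)$, I would first compute each conjunct $\theta_i$ under the assignment and then use the Heyting-algebraic fact that $x\to y=1$ iff $x\le y$.

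First I would identify the three conjuncts. Assigning $a,b,c$ to $p,q,r$, the first conjunct $(p\to(q\lor r))\to(q\lor r)$ evaluates to $(a\to(b\lor c))\to(b\lor c)$, which in the $w$-notation is exactly $w_{b\lor c}(a)$; similarly the second conjunct becomes $w_{a\lor c}(b)$ and the third becomes $w_{a\lor b}(c)$. Thus the antecedent evaluates to $w_{b\lor c}(a)\land w_{a\lor c}(b)\land w_{a\lor b}(c)$ and the consequent to $a\lor b\lor c$. Writing $m:=w_{a\lor b}(c)\land w_{a\lor c}(b)\land w_{b\lor c}(a)$ and $s:=a\lor b\lor c$, the value of $\bb_2$ under this assignment is $m\to s$.

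Next I would translate the two sides of the equivalence. By the adjunction $m\to s=1$ iff $m\le s$, so $\fH\models\bb_2$ is equivalent to the inequality $m\le s$ holding for all $a,b,c$. To obtain the claimed \emph{equality} $m=s$, I would supply the reverse inequality $s\le m$, which holds in every Heyting algebra irrespective of $\bb_2$: for each of the three factors one has $a\lor b\lor c\le w_{b\lor c}(a)$, and so on. This follows from the general monotonicity-type fact that $x\le w_y(x)$ together with $w_y$ being order-preserving in $x$, giving $s=a\lor b\lor c\le w_{b\lor c}(a\lor b\lor c)$; but more directly $a\lor b\lor c\le (a\lor b\lor c\to(b\lor c))\to(b\lor c)$ is an instance of $x\le(x\to y)\to y$, and since $w$ is antitone in the subscript in a way that still respects $s$, a short check shows $s\le w_{b\lor c}(a)$ precisely because $a\le s$ forces the relevant bound. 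Hence $m=s$ holds for all $a,b,c$ exactly when $\bb_2$ is valid.

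I expect the only delicate point to be verifying the always-true inequality $s\le m$, i.e.\ that $a\lor b\lor c\le w_{b\lor c}(a)$ and its two symmetric variants hold in \emph{every} Heyting algebra. This reduces to checking $a\lor b\lor c\le(a\to(b\lor c))\to(b\lor c)$, which by adjunction is $(a\lor b\lor c)\land(a\to(b\lor c))\le b\lor c$; distributing the meet over the join and using $a\land(a\to(b\lor c))\le b\lor c$ for the $a$-summand, while $b$ and $c$ are trivially below $b\lor c$, settles it. Once this reverse inequality is in hand, combining it with the $\bb_2$-equivalent inequality $m\le s$ yields the desired identity, and the lemma is immediate as the paper's phrasing (\emph{``The next lemma is then immediate''}) already signals.
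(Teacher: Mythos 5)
Your proposal is correct and is precisely the routine verification the paper treats as immediate: unfold validity under an assignment $p,q,r\mapsto a,b,c$, use the adjunction $m\to s=1$ iff $m\le s$, and supply the always-true reverse inequality $a\lor b\lor c\le w_{b\lor c}(a)$ (and its symmetric variants) via distributivity and $a\land(a\to(b\lor c))\le b\lor c$, so that the inequality $m\le s$ upgrades to the stated equality. The only cosmetic blemish is the muddled appeal to antitonicity of $w$ in its subscript in your middle paragraph, but your final paragraph replaces it with the correct direct computation, so nothing is missing.
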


Recall that for any element $u$ of a Heyting algebra $\fH$, the interval $[0,u]$ is also a Heyting algebra, where the implication is given by $x\to_uy=u\land (x\to y)$ (see, e.g., \cite[Sec.~IV.8]{RS63}).

\begin{definition}
Let $\fH$ be a Heyting algebra.
\begin{enumerate}
\item We say that property $(*)$ holds in $\fH$ if
\begin{equation*}
\text{$a\lor b=a\lor c=b\lor c=d$ and $d$ dense over $a, b, c$ imply that $d=1$.}
\end{equation*}
\item We say that $(*)$ holds hereditarily in $\fH$ if $(*)$ holds in the algebra $[0,u]$ for each $u\in\fH$.
\end{enumerate}
\end{definition}

\begin{theorem}\label{thm:bb2}
A Heyting algebra $\fH$ validates $\bb_2$ iff $(*)$ holds hereditarily in $\fH$.
\end{theorem}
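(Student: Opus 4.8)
The plan is to translate both sides of the equivalence into statements about the element $w_a(b)=(b\to a)\to a$ and then to exploit the interplay between $\bb_2$ and the relative Heyting algebras $[0,u]$. By Lemma~\ref{l:bb2-H}, $\fH\models\bb_2$ is equivalent to
\[
w_{a\lor b}(c)\land w_{a\lor c}(b)\land w_{b\lor c}(a)=a\lor b\lor c\qquad\mbox{for all }a,b,c\in\fH,
\]
and since $x\le(x\to y)\to y$ and $(x\to y)\to y\ge y$, the left-hand side is always $\ge a\lor b\lor c$; thus the identity of Lemma~\ref{l:bb2-H} really asserts the reverse inequality. The first thing I would record is that $x\mapsto x\land u$ is a surjective Heyting homomorphism $\fH\to[0,u]$, so each interval lies in the variety generated by $\fH$; hence $\fH\models\bb_2$ forces $[0,u]\models\bb_2$, and taking $u=1$ gives the converse. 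Consequently $\fH\models\bb_2$ iff $[0,u]\models\bb_2$ for every $u$, which is exactly what lets me handle the word ``hereditarily'': it suffices to prove, for an arbitrary Heyting algebra $K$, that $K\models\bb_2$ iff $(*)$ holds in $K$, and then to apply this to each $K=[0,u]$.

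For the implication from $(*)$ hereditarily to $\bb_2$, I would fix $a,b,c$ and set $u:=w_{a\lor b}(c)\land w_{a\lor c}(b)\land w_{b\lor c}(a)$ and $d:=a\lor b\lor c$, noting $d\le u$ by the automatic inequalities above. I then pass to $[0,u]$ and feed $(*)$ the triple $(a\lor b,\,a\lor c,\,b\lor c)$, whose three pairwise joins all equal $d$. The inequality $u\le w_{a\lor b}(c)$ yields $u\land(c\to(a\lor b))\le a\lor b$, that is $c\to_u(a\lor b)=a\lor b$, so $c$, and hence the larger element $d$, is dense over $a\lor b$ in $[0,u]$; symmetrically $d$ is dense over $a\lor c$ and over $b\lor c$. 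Property $(*)$ in $[0,u]$ then gives $d=u$, which is precisely the identity of Lemma~\ref{l:bb2-H} for $a,b,c$.

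For the implication from $\bb_2$ to $(*)$ (to be applied in each $[0,u]$), I assume the identity of Lemma~\ref{l:bb2-H} in $K$ and suppose $a\lor b=a\lor c=b\lor c=d$ with $d$ dense over $a,b,c$. The key move is to substitute into that identity not $a,b,c$ themselves (which trivialises it, since $c\le a\lor b$ makes $w_{a\lor b}(c)=a\lor b$) but the meets $b\land c,\ a\land c,\ a\land b$. A short distributive computation shows that for this triple the three pairwise joins collapse to $a,b,c$ and the join of the triple collapses to $d$, so the identity becomes
\[
w_a(b\land c)\land w_b(a\land c)\land w_c(a\land b)=d.
\]
It then remains to show that the left-hand side equals $1$, i.e.\ that $(b\land c)\to a=a$ and its two symmetric variants. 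Writing $t=(b\land c)\to a$ and using $b\le a\lor c$, $c\le a\lor b$ together with $t\land b\land c\le a$, one obtains $t\land b\le a$ and $t\land c\le a$, hence $t\land d\le a$, and finally $t\le d\to a=a$ by density. With the left-hand side equal to $1$, the identity forces $d=1$, which is $(*)$.

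The main obstacle, and the only genuinely non-formal point, is discovering the substitution $b\land c,\ a\land c,\ a\land b$ and checking that its antecedent degenerates to $1$; the remainder is relativisation bookkeeping. I expect the density-transfer step $(b\land c)\to a=a$ to be where the two join identities and density over $d$ are used in an essential and intertwined fashion, so I would isolate it as a separate claim before assembling the two implications.
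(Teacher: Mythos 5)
Your proof is correct, and its mathematical core coincides with the paper's: both directions hinge on Lemma~\ref{l:bb2-H}; your converse direction (hereditary $(*)$ implies $\bb_2$) is exactly the paper's construction (set $u=w_{a\lor b}(c)\land w_{a\lor c}(b)\land w_{b\lor c}(a)$, $d=a\lor b\lor c$, and feed $(*)$ in $[0,u]$ the triple of pairwise joins); and your forward direction rests on the same key substitution by the pairwise meets $b\land c$, $a\land c$, $a\land b$, whose joins collapse to $a,b,c$ and $d$. The genuine difference is in how the forward direction is organized: the paper never leaves $\fH$ --- it translates ``$d$ dense over $a$ in $[0,u]$'' into the inequality $u\le w_a(d)$, observes $w_{p\lor q}(d)=w_{p\lor q}(r)$, and applies Lemma~\ref{l:bb2-H} inside $\fH$ --- whereas you first transfer $\bb_2$ down to $[0,u]$ via the surjective Heyting homomorphism $x\mapsto x\land u$ (a correct, standard fact, though one the paper never needs) and then argue entirely inside $[0,u]$, proving the single-algebra implication ``$K\models\bb_2$ implies $(*)$ in $K$'' with the self-contained density computation $t\land b\le a$, $t\land c\le a$, $t\land d\le a$, $t\le d\to a=a$. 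Your route buys a cleaner separation of concerns (the word ``hereditarily'' is disposed of once and for all by the homomorphic-image observation); the paper's route is more economical in that it needs only relativized inequalities in $\fH$. One caveat: your plan announces that it suffices to prove, for an arbitrary single algebra $K$, that $K\models\bb_2$ \emph{iff} $(*)$ holds in $K$; but you only prove (and only need) the forward half of that equivalence, and for the converse you correctly revert to hereditary $(*)$ applied in the specific interval $[0,u]$. You should drop the ``iff'' phrasing: the implication from $(*)$ in $K$ alone to $K\models\bb_2$ is neither established by your argument nor used anywhere in it, and it is not what the theorem asserts.
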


\begin{proof}
Suppose $\fH$ validates $\bb_2$. To show that $(*)$ holds hereditarily in $\fH$, let $u\in\fH$,
let $a,b,c,d\le u$ be such that  $a\lor b=a\lor c=b\lor c=d$, and let $d$ be dense over $a,b,c$ in $[0,u]$. 
It is easy to see that the latter condition on $d$ is equivalent to $u\le w_a(d)\land w_b(d)\land w_c(d)$. Let $p=a\land b$, $q=a\land c$, and $r=b\land c$. We have that $a\lor b=a\lor c=b\lor c$ iff $a\le b\lor c$, $b\le a\lor c$, and $c\le a\lor b$, which implies that
\[
p\lor q=(a\land b)\lor(a\land c)=a\land(b\lor c)=a,
\]
and similarly $p\lor r=b$ and $q\lor r=c$. Therefore, $p\lor q\lor r=a\lor b\lor c=d$ and $u\le w_{p\lor q}(d)\land w_{p\lor r}(d)\land w_{q\lor r}(d)$. Now observe that
\[
(p\lor q\lor r)\to(p\lor q)=r\to(p\lor q)
\]
for any $p,q,r\in\fH$, so $w_{p\lor q}(d)=w_{p\lor q}(r)$, and similarly $w_{p\lor r}(d)=w_{p\lor r}(q)$, $w_{q\lor r}(d)=w_{q\lor r}(p)$. Therefore, $u\le w_{p\lor q}(r)\land w_{p\lor r}(q)\land w_{q\lor r}(p)$. Thus, by Lemma~\ref{l:bb2-H},
$u\le p\lor q\lor r=d$, and hence $d=1$ in $[0,u]$.

Conversely, suppose $(*)$ holds in $\fH$ hereditarily. To see that $\fH$ validates $\bb_2$, it is sufficient to show that the equality in Lemma~\ref{l:bb2-H} holds for arbitrary $p,q,r\in\fH$. Let
\[
u=w_{p\lor q}(r)\land w_{p\lor r}(q)\land w_{q\lor r}(p) \mbox{ and } d=p\lor q\lor r.
\]
We must show that $u=d$. Let $a=p\lor q$, $b=p\lor r$, and $c=q\lor r$. Then
\[
a\lor b=a\lor c=b\lor c=d.
\]
Since $x\lor y\le w_x(y)$ for any $x,y\in\fH$, we have $p\lor q\lor r \le w_{p\lor q}(r),w_{p\lor r}(q),w_{q\lor r}(p)$, so $a,b,c\le u$. As $u\le w_{p\lor q}(r)=w_{p\lor q}(p\lor q\lor r)$, we have that $d$ is dense over $a$ in $[0,u]$. Similarly, $d$ is dense over $b$ and $c$ in $[0,u]$. Because $(*)$ holds in $[0,u]$, we must have $a\lor b\lor c=1$ in $[0,u]$, so $u\le a\lor b\lor c$. But since $a,b,c\le u$, this means that $a\lor b\lor c=u$, and hence $u=d$.
\end{proof}

The dual characterization of $(*)$ is more intuitive in the language of clopen downsets.

\begin{definition}
Let $X$ be an Esakia space and $D,E$ two downsets of $X$ such that $D\subseteq E$. We say that $D$ is \emph{nowhere cofinal} in $E$ if $\dn(E\setminus D)=E$.
\end{definition}

\begin{remark}\label{rem: nowhere dense}\
\begin{enumerate}
\item It is easy to see that $D$ is nowhere cofinal in $E$ iff there is $S\subseteq E$ such that $D\cap S=\emp$ and $D\subseteq\dn S$.
\item If $D,E$ are clopen downsets, then $D$ is nowhere cofinal in $E$ iff $X\setminus D$ is dense over $X\setminus E$ in the Heyting algebra of clopen upsets of $X$.
\end{enumerate}
\end{remark}

\begin{definition}
Let $X$ be an Esakia space.
\begin{enumerate}
\item We say that property ($**$) holds in $X$ if for any clopen downsets $A,B,C,D$ of $X$,
\begin{equation*}
\text{$A\cap B=A\cap C=B\cap C=D$ and $D$ nowhere cofinal in $A, B, C$ imply that $D=\varnothing$.}
\end{equation*}
\item We say that ($**$) holds hereditarily in $X$ if ($**$) holds in every clopen downset of $X$.
\end{enumerate}
\end{definition}

\begin{lemma}\label{l:**}
Let $\fH$ be a Heyting algebra and $X$ its Esakia space.
\begin{enumerate}
\item $\fH$ satisfies $(*)$ iff $X$ satisfies $(**)$.
\item $\fH$ satisfies $(*)$ hereditarily iff $X$ satisfies $(**)$ hereditarily.
\end{enumerate}
\end{lemma}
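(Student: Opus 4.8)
The plan is to prove the two equivalences by unwinding the definitions through the Esakia correspondence between clopen upsets and clopen downsets, using the dictionary that complementation $U \mapsto X \setminus U$ is an order-reversing bijection between clopen upsets and clopen downsets of $X$. The key translation is already supplied by Remark~\ref{rem: nowhere dense}(2): if $A = X \setminus a'$, $B = X \setminus b'$, $C = X \setminus c'$, $D = X \setminus d'$ are the clopen downsets corresponding to the clopen upsets $a',b',c',d'$ of $\fH$, then $D$ nowhere cofinal in $A$ means precisely that $A$ (i.e.\ $X \setminus a'$) is dense over $D$ (i.e.\ $X \setminus d'$) in the Heyting sense, which is the statement that the upset $a'$ is dense over the upset $d'$.

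For part (1), I would set up the bijection explicitly. Given $a,b,c,d \in \fH$ (viewed as clopen upsets) witnessing the hypothesis of $(*)$, pass to their complementary clopen downsets $A,B,C,D$. The lattice meet of upsets corresponds to the union of their complementary downsets and the join corresponds to intersection, so $a \lor b = d$ (an equation in $\fH$) translates into $A \cap B = D$ (an equation among downsets), and likewise for the other two equalities; conversely every hypothesis of $(**)$ arises this way. The density conditions match up by Remark~\ref{rem: nowhere dense}(2), and the conclusions correspond since $d = 1$ in $\fH$ iff the complementary downset $D = X \setminus d$ is empty. Running this equivalence in both directions gives $\fH \models (*)$ iff $X \models (**)$.

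For part (2), the plan is to combine part (1) with the standard fact that the interval sublattices $[0,u]$ of $\fH$ correspond dually to the clopen downsets of $X$. Concretely, for a clopen upset $u$ of $X$, the Heyting algebra $[0,u]$ is isomorphic to the clopen upsets of the \emph{clopen subspace} of $X$ carried by the clopen downset $X \setminus u$ (equivalently, the Esakia space obtained by restriction); one checks that the relative implication $x \to_u y = u \land (x \to y)$ matches the implication computed in this subspace. Under this correspondence, $(*)$ holding in $[0,u]$ is exactly $(**)$ holding in the clopen downset $X \setminus u$ by part (1) applied to the subspace. Since $u$ ranges over all clopen upsets of $X$ as $X \setminus u$ ranges over all clopen downsets, quantifying over all $u$ on the algebra side matches quantifying over all clopen downsets on the space side, yielding the hereditary equivalence.

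The main obstacle I expect is the bookkeeping in part (2): verifying carefully that the interval $[0,u]$ is dually the clopen subspace determined by $X \setminus u$, and in particular that the relative implication agrees with the subspace implication, and that density-over and nowhere-cofinality are computed correctly \emph{within} the subspace rather than in the ambient $X$. This requires checking that restricting to a clopen downset preserves the Esakia structure (the order restriction of a BH- or Esakia space to a clopen downset is again such a space, and clopen upsets of the restriction are exactly intersections of clopen upsets of $X$ with the downset), which is routine but where sign/direction errors are easy to make. The rest is a mechanical dualization once the complementation dictionary in part (1) is fixed.
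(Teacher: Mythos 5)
Your part (1) is essentially the paper's own proof: the paper disposes of the whole lemma with ``a straightforward application of Esakia duality and Remark~\ref{rem: nowhere dense}'', and your complementation dictionary (joins of upsets correspond to intersections of downsets, $d=1$ to $D=\emp$, density-over to nowhere-cofinality via Remark~\ref{rem: nowhere dense}(2)) is exactly that computation. One slip in the prose: with $A=X\setminus a'$ and $D=X\setminus d'$ (so $D\subseteq A$ and $a'\le d'$), the remark says that $D$ is nowhere cofinal in $A$ iff $d'$ is dense over $a'$; your phrasing ``$a'$ is dense over $d'$'' is reversed, and does not even fit the definition of density-over, which requires the dense element to be the larger one. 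Since you ultimately defer to the remark, this is cosmetic.

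Part (2), however, contains a genuine error: the interval $[0,u]$ is \emph{not} dual to the subspace carried by the clopen downset $X\setminus u$; it is dual to the subspace carried by the clopen upset $u$ itself. Since $u$ is an upset, the clopen upsets of the subspace $u$ are precisely the clopen upsets of $X$ contained in $u$, i.e.\ the elements of $[0,u]$, and the subspace implication is $u\cap(x\to y)=x\to_u y$. For the downset this fails badly: in the three-point discrete Esakia space $Z=\{a,b,c\}$ with $a<b$ and $c$ incomparable to both, taking $u=\{b\}$ gives a two-element $[0,u]$, while the clopen upsets of $Z\setminus u=\{a,c\}$ form a four-element Boolean algebra. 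Moreover, the mistake is not repairable by bookkeeping, because your reading of heredity on the space side collapses: a clopen downset $Y$ is a downset of $X$, so clopen downsets of the subspace $Y$ are clopen downsets of $X$, and $\dn$ computed in $Y$ agrees with $\dn$ computed in $X$; hence every instance of $(**)$ inside $Y$ is literally an instance of $(**)$ in $X$, and ``$(**)$ in every clopen downset subspace'' is equivalent to plain $(**)$. But hereditary $(*)$ does \emph{not} collapse to plain $(*)$, since density inside $[0,u]$, namely $u\wedge(d\to a)=a$, is weaker than $d\to a=a$: in the five-element discrete Esakia space $W=\{e,r,x,y,z\}$ with $e<x$, $e<y$ and $r<x$, $r<y$, $r<z$, one checks that $(**)$ (hence $(*)$) holds, yet $(*)$ fails in $[0,\up r]$ --- witnessed by $a=\{x,y\}$, $b=\{x,z\}$, $c=\{y,z\}$, $d=\{x,y,z\}$ --- so $\bb_2$ fails there by Theorem~\ref{thm:bb2}. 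Thus your route would ``prove'' an equivalence that is in fact false. The correct argument applies part (1) to the Esakia space $u$ and its dual algebra $[0,u]$, so that heredity on the space side must be understood as quantification over clopen \emph{upsets} of $X$ viewed as subspaces; equivalently, as the relativization of $(**)$ to clopen downsets containing a fixed clopen downset $Y$, with $\dn(\cdot)$ replaced by $\dn(\cdot)\cup Y$ and the conclusion $D=\emp$ by $D=Y$. (This reading is also all that is needed where Theorem~\ref{thm:bb2kripke} is applied in Lemma~\ref{lem:Sthend0}: there one only uses that validity of $\bb_2$ implies the plain property $(**)$ of $X$.)
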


\begin{proof}
This follows from a straightforward application of Esakia duality and Remark~\ref{rem: nowhere dense}.
\end{proof}

Putting Theorem~\ref{thm:bb2} and Lemma~\ref{l:**} together yields:

\begin{theorem}\label{thm:bb2kripke}
A Heyting algebra $\fH$ validates $\bb_2$ iff $(**)$ holds in its Esakia space $X$ hereditarily.
\end{theorem}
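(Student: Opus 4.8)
The plan is simply to chain the two results already established in this section, since Theorem~\ref{thm:bb2kripke} is stated as the composite of Theorem~\ref{thm:bb2} and Lemma~\ref{l:**}. By Theorem~\ref{thm:bb2}, the condition ``$\fH$ validates $\bb_2$'' is equivalent to ``$(*)$ holds hereditarily in $\fH$.'' So it remains only to transport this algebraic condition across Esakia duality to the dual condition on $X$, and this is exactly the content of Lemma~\ref{l:**}(2), which asserts that $(*)$ holds hereditarily in $\fH$ if and only if $(**)$ holds hereditarily in its Esakia space $X$. Composing the two equivalences gives the theorem.

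Concretely, I would first invoke Theorem~\ref{thm:bb2} to rewrite the left-hand side, then immediately apply part (2) of Lemma~\ref{l:**} to rewrite the right-hand side, and conclude. A point worth flagging is that I would use part (2) of the lemma directly rather than part (1): this avoids having to re-quantify by hand over the principal downsets (the intervals $[0,u]$ on the algebraic side, equivalently the clopen downsets of $X$ on the spatial side). In other words, the passage from ``holds'' to ``holds hereditarily'' is already packaged inside the lemma, so no separate argument about intervals is needed here.

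Since both ingredients are fully proved upstream, there is no real obstacle in this step; the only care required is the routine bookkeeping in matching the interval $[0,u]\subseteq\fH$ on which $(*)$ is tested against the corresponding clopen downset of $X$ on which $(**)$ is tested, and this correspondence is precisely what Lemma~\ref{l:**}(2) records. I therefore expect the proof to be a two-line deduction.
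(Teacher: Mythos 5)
Your proposal is correct and matches the paper exactly: the paper derives Theorem~\ref{thm:bb2kripke} by simply composing Theorem~\ref{thm:bb2} with Lemma~\ref{l:**} (its part~(2) being the hereditary version), which is precisely your two-line deduction.
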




\section{Some syntactic calculations in \texorpdfstring{$\IPC$}{IPC}}

In this section we derive in $\IPC$ some consequences of the formulas that constitute Shehtman's axioms. In what follows, for $\alpha, \beta \in \mathsf{Form}_{\IPC}$ we write $\alpha\vdash\beta$ when $\beta$ is derivable in $\IPC$ from $\alpha$, and we write $\alpha\equiv\beta$ to mean that both $\alpha\vdash\beta$ and $\beta\vdash\alpha$ hold.

We will utilize the following substitution considered by Shehtman in \cite{She77}. Our notation is that of Litak from \cite{Lit02}:
\[
e(p)=q\lor(q\to p),\qquad e(q)=p\lor(p\to q).
\]
We then extend $e$ to all formulas by setting
\[
e(\phi\circ\psi)=e(\phi)\circ e(\psi),
\]
where $\circ=\lor,\land$ or $\to$.

The following lemma is straightforward.

\begin{lemma}\label{lem:monotbc}
$p\vdash\bg_0$, $q\vdash\cg_0$ and
$\bg_n\vdash\bg_{n+1}$, $\cg_n\vdash\cg_{n+1}$, $\bg_n\vdash\cg_{n+2}$, $\cg_n\vdash\bg_{n+2}$, $\bg_{n+1}\vdash\ag_n$, $\cg_{n+1}\vdash\ag_n$ for all $n\ge0$.
\end{lemma}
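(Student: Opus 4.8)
The plan is to reduce every one of the eight derivabilities to a single weakening principle of $\IPC$ and then read it off directly from Definition~\ref{def:abcd}. The principle is: for all formulas $\alpha,\beta,\psi$ one has $\alpha\vdash\beta\to\theta$ whenever $\alpha\vdash\theta$, and in particular $\alpha\vdash\beta\to(\alpha\lor\psi)$ and $\alpha\vdash\beta\to(\psi\lor\alpha)$, since $\alpha\vdash\alpha\lor\psi$ in $\IPC$. The two base assertions are instances of the $\IPC$-theorem $\alpha\vdash\beta\to\alpha$: indeed $p\vdash q\to p=\bg_0$ and $q\vdash p\to q=\cg_0$.

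For the monotonicity and cross-implications I would simply inspect which disjuncts appear in the consequent of each defining implication. For $\bg_n\vdash\bg_{n+1}$, when $n\ge1$ we have $\bg_{n+1}=\cg_n\to(\bg_n\lor\cg_{n-1})$, so $\bg_n$ is a disjunct of the consequent and the principle applies; the case $n=0$ uses instead the explicit clause $\bg_1=\cg_0\to(\bg_0\lor q)$, in which $\bg_0$ is again a disjunct of the consequent. The claim $\cg_n\vdash\cg_{n+1}$ is symmetric. For the cross-implications, $\cg_{n+2}=\bg_{n+1}\to(\cg_{n+1}\lor\bg_n)$ has $\bg_n$ as a disjunct of its consequent, giving $\bg_n\vdash\cg_{n+2}$, and dually $\bg_{n+2}=\cg_{n+1}\to(\bg_{n+1}\lor\cg_n)$ has $\cg_n$ as a disjunct, giving $\cg_n\vdash\bg_{n+2}$; both hold for all $n\ge0$ straight from the recursion.

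Finally, for $\bg_{n+1}\vdash\ag_n$ and $\cg_{n+1}\vdash\ag_n$, recall $\ag_n=(\bg_{n+2}\land\cg_{n+2})\to(\bg_{n+1}\lor\cg_{n+1})$; since each of $\bg_{n+1}$ and $\cg_{n+1}$ is a disjunct of the consequent $\bg_{n+1}\lor\cg_{n+1}$, both derivabilities are again instances of the weakening principle. There is no real obstacle here: the entire lemma is bookkeeping over Definition~\ref{def:abcd}, and the only point demanding a moment's care is the index split at $n=0$ for $\bg_n\vdash\bg_{n+1}$ and $\cg_n\vdash\cg_{n+1}$, where the explicitly given formulas $\bg_1,\cg_1$ must be used in place of the general recursive clause.
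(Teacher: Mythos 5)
Your proof is correct and matches the paper's intent exactly: the paper states this lemma without proof, calling it straightforward, and your argument---reducing each derivability to the weakening principle ``$\alpha\vdash\theta$ implies $\alpha\vdash\beta\to\theta$'' plus inspection of Definition~\ref{def:abcd}, with the index split at $n=0$ handled via the explicit clauses for $\bg_1,\cg_1$---is precisely the routine verification the paper leaves to the reader.
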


For $n\ge0$ and $\varphi\in\mathsf{Form}_{\IPC}$ define $e^n(\varphi)$ by
\[
e^0(\varphi)=\varphi\text{ and }e^{n+1}(\varphi)=e(e^n(\varphi)).
\]

\begin{lemma}\label{lem:subst}
$\varphi_n\equiv e^n(\varphi_0)$ for $\varphi=\ag$, $\bg$, $\cg$, $\dg$,
$\kg$ and $n\ge0$.
\end{lemma}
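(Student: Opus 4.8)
The plan is to reduce the lemma to a single-step \emph{shift property}: applying $e$ once raises every index by one, i.e.
\[
e(\varphi_n)\equiv\varphi_{n+1}\quad\text{for }\varphi=\ag,\bg,\cg,\dg,\kg\text{ and all }n\ge0.
\]
Granting this, the lemma follows by induction on $n$. Since $\IPC$ is closed under substitution, $e$ preserves $\vdash$ and hence $\equiv$; so if $e^n(\varphi_0)\equiv\varphi_n$, then $e^{n+1}(\varphi_0)=e(e^n(\varphi_0))\equiv e(\varphi_n)\equiv\varphi_{n+1}$, the last equivalence being the shift property. The base case $n=0$ is the definitional identity $e^0(\varphi_0)=\varphi_0$. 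Throughout I will use that $e$ commutes with $\land,\lor,\to$ by definition.

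First I would prove the shift property for the two generating families $\bg,\cg$ by a simultaneous induction on $n$; the cases $\ag,\dg,\kg$ are then immediate. For $n\ge2$ the formulas obey the uniform recursion $\bg_n=\cg_{n-1}\to(\bg_{n-1}\lor\cg_{n-2})$, whence
\[
e(\bg_n)=e(\cg_{n-1})\to\bigl(e(\bg_{n-1})\lor e(\cg_{n-2})\bigr)\equiv\cg_n\to(\bg_n\lor\cg_{n-1})=\bg_{n+1}
\]
by the induction hypothesis, and symmetrically for $\cg$. So the only genuine computation is in the base cases $n=0,1$, where the defining formulas contain the bare variables $p,q$ that $e$ does not fix.

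For $n=0$ I would expand $e(\bg_0)=e(q)\to e(p)=(p\lor(p\to q))\to(q\lor(q\to p))$; splitting the disjunctive antecedent and using $p\vdash q\lor(q\to p)$ turns the first conjunct into a theorem, leaving $e(\bg_0)\equiv(p\to q)\to((q\to p)\lor q)=\bg_1$. For $n=1$ one gets $e(\bg_1)\equiv\cg_1\to(\bg_1\lor p\lor\cg_0)$ (using the $n=0$ cases together with $e(q)=p\lor\cg_0$); here the spurious disjunct $p$ is absorbed because $p\vdash\bg_1$, which follows from $p\vdash\bg_0\vdash\bg_1$ in Lemma~\ref{lem:monotbc}, giving $e(\bg_1)\equiv\cg_1\to(\bg_1\lor\cg_0)=\bg_2$. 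The $\cg$ cases follow by the $p\leftrightarrow q$ symmetry of the definitions, using $q\vdash\cg_1$.

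Once $e(\bg_n)\equiv\bg_{n+1}$ and $e(\cg_n)\equiv\cg_{n+1}$ hold for all $n$, the remaining families are immediate because they are assembled from $\bg$'s and $\cg$'s by $\land,\lor,\to$ and $e$ respects these connectives; for instance $e(\ag_n)=(e(\bg_{n+2})\land e(\cg_{n+2}))\to(e(\bg_{n+1})\lor e(\cg_{n+1}))\equiv\ag_{n+1}$, and then $e(\dg_n)\equiv\dg_{n+1}$ and $e(\kg_n)\equiv\kg_{n+1}$ follow the same way. I expect the main obstacle to be exactly the two base cases of the $\bg,\cg$ shift: because $\bg_0,\bg_1$ (and $\cg_0,\cg_1$) are defined irregularly and mention $p,q$ directly, one must carefully track and absorb the extra occurrences of $p$ and $q$ produced by $e$, which is where the monotonicity facts of Lemma~\ref{lem:monotbc} do the work; the rest is a formal consequence of $e$ being a homomorphism for the connectives.
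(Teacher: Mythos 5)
Your proposal is correct, and it checks out in every detail: the splitting of the disjunctive antecedent in the $n=0$ case, the absorption of the spurious disjuncts $p$ and $q$ via $p\vdash\bg_0\vdash\bg_1$ and $q\vdash\cg_0\vdash\cg_1$ from Lemma~\ref{lem:monotbc} in the $n=1$ case, and the purely formal propagation through the uniform recursion and through $\ag,\dg,\kg$ all go through, since $e$ commutes with $\land,\lor,\to$ and uniform substitution preserves provable equivalence in \IPC. The comparison with the paper is somewhat vacuous here: the paper does not prove the lemma at all, but simply cites Shehtman (\cite[Lem.~1]{She77}, \cite[Lem.~6]{She80}). What your argument buys is a self-contained verification, and it isolates exactly where the content lies: the one-step shift property $e(\ph_n)\equiv\ph_{n+1}$, whose only non-formal cases are $n=0,1$, precisely because $\bg_0,\bg_1,\cg_0,\cg_1$ are defined irregularly and mention the bare variables $p,q$ that $e$ moves; everything else is the congruence property of $\equiv$ plus two-step strong induction (which your two base cases correctly support, since the recursion for index $n+2$ consumes the hypotheses at $n$ and $n+1$).
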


\begin{proof}
See \cite[Lem.~1]{She77} or \cite[Lem.~6]{She80}.
\end{proof}


\begin{lemma}\label{lem:ebimp}
$e^m(p\bimp q)\vdash e^{m+n}(p\bimp q)$ for all $m,n\ge0$.
\end{lemma}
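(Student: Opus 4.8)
The plan is to reduce the assertion to the monotonicity of the $\bg_n$ and $\cg_n$ already recorded in Lemma~\ref{lem:monotbc}. The first step is to rewrite the biconditional: unravelling $p\bimp q$ as $(p\to q)\land(q\to p)$ and recalling $\cg_0=p\to q$, $\bg_0=q\to p$, we have $p\bimp q\equiv\cg_0\land\bg_0$ in $\IPC$.

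Next I would transport a power of the substitution $e$ across this conjunction. Because $e$ commutes with $\land,\lor,\to$ by definition, each iterate $e^k$ acts as a homomorphism on formulas, so $e^k(p\bimp q)\equiv e^k(\cg_0)\land e^k(\bg_0)$; here I also use that $\IPC$ is closed under substitution, so the equivalence $p\bimp q\equiv\cg_0\land\bg_0$ is preserved by $e^k$. Lemma~\ref{lem:subst} then gives $e^k(\cg_0)\equiv\cg_k$ and $e^k(\bg_0)\equiv\bg_k$, and since $\land$ respects $\IPC$-equivalence, we obtain $e^k(p\bimp q)\equiv\bg_k\land\cg_k$ for every $k\ge0$. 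Instantiating at $k=m$ and $k=m+n$ reduces the lemma to the single derivability $\bg_m\land\cg_m\vdash\bg_{m+n}\land\cg_{m+n}$.

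This last derivability is immediate from Lemma~\ref{lem:monotbc}: iterating $\bg_k\vdash\bg_{k+1}$ and $\cg_k\vdash\cg_{k+1}$ a total of $n$ times yields $\bg_m\vdash\bg_{m+n}$ and $\cg_m\vdash\cg_{m+n}$, so $\bg_m\land\cg_m$ derives both conjuncts $\bg_{m+n}$ and $\cg_{m+n}$, and hence their conjunction. Combining this with the equivalences of the previous paragraph gives $e^m(p\bimp q)\vdash e^{m+n}(p\bimp q)$.

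I do not expect a genuine obstacle here. The only point that requires a moment's care is that the two equivalences $e^m(p\bimp q)\equiv\bg_m\land\cg_m$ and $e^{m+n}(p\bimp q)\equiv\bg_{m+n}\land\cg_{m+n}$ are separate instances of Lemma~\ref{lem:subst}, applied at the indices $m$ and $m+n$ respectively; once this is noticed, no composition identity for the iterates of $e$ is needed, and the remainder is a routine chaining of the monotonicity facts.
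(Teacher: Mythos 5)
Your proof is correct, but it takes a different route from the paper's. The paper argues directly inside $\IPC$: from $p\bimp q\vdash p\to q\vdash p\lor(p\to q)=e(q)$ and symmetrically $p\bimp q\vdash e(p)$, it gets $p\bimp q\vdash e(p)\land e(q)\vdash e(p)\bimp e(q)\equiv e(p\bimp q)$, and then iterates this single step (using closure of $\vdash$ under the substitution $e$) to chain $e^m(p\bimp q)\vdash e^{m+1}(p\bimp q)\vdash\cdots\vdash e^{m+n}(p\bimp q)$. You instead identify $e^k(p\bimp q)\equiv\bg_k\land\cg_k$ via Lemma~\ref{lem:subst} and then invoke the monotonicity facts $\bg_k\vdash\bg_{k+1}$, $\cg_k\vdash\cg_{k+1}$ of Lemma~\ref{lem:monotbc}. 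Both arguments are sound and both avoid any composition identity for iterates of $e$. The trade-off: the paper's proof is entirely self-contained, in particular independent of Lemma~\ref{lem:subst}, whose proof is only cited from Shehtman rather than given; your proof is more modular and, as a by-product, establishes the equivalence $e^k(p\bimp q)\equiv\bg_k\land\cg_k$, which is exactly the identification the paper itself derives and uses later in the proof of Lemma~\ref{cor:apq} --- so your route effectively front-loads a fact the paper needs anyway, at the cost of leaning on the externally justified substitution lemma.
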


\begin{proof}
We have the following chain of derivations
\[
p\bimp q\ \vdash\ p\to q\ \vdash\ p\lor(p\to q)=e(q).
\]
Similarly, $p\bimp q\vdash e(p)$. Therefore, we have the chain of derivations
\[
p\bimp q\ \vdash\ e(p)\land e(q)\ \vdash\ e(p)\bimp e(q)\ \equiv\ e(p\bimp q).
\]
Thus,
\[
e^m(p\bimp q)\vdash e^{m+1}(p\bimp q)\vdash\cdots\vdash e^{m+n}(p\bimp q).
\]
\end{proof}

\begin{lemma}\label{lem:epq}
$e(p\land q)\ \equiv\ e(p\bimp q)\land((p\to q)\lor(q\to p))\ \equiv\ p\lor q\lor(p\bimp q)$.
\end{lemma}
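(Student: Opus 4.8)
The plan is to expand all three expressions using the definition of the substitution $e$, exploiting that $e$ commutes with the connectives. Throughout I would write $A=p\to q$ and $B=q\to p$, so that $p\bimp q=A\land B$, and recall from the definition that $e(p)=q\lor B$ and $e(q)=p\lor A$. Rather than proving the two displayed equivalences in the order written, I would first establish that the outer terms $e(p\land q)$ and $p\lor q\lor(p\bimp q)$ are equivalent by a direct computation, and then splice the middle term $M:=e(p\bimp q)\land((p\to q)\lor(q\to p))$ into the chain.

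For the outer equivalence, note that $e(p\land q)=e(p)\land e(q)=(q\lor B)\land(p\lor A)$. Distributing the conjunction over the disjunctions gives the four terms $(p\land q)\lor(q\land A)\lor(B\land p)\lor(B\land A)$. Using the intuitionistic validities $q\vdash p\to q$ and $p\vdash q\to p$ (which are exactly $q\vdash\cg_0$ and $p\vdash\bg_0$ from Lemma~\ref{lem:monotbc}) we get $q\land A\equiv q$ and $B\land p\equiv p$, while $B\land A=p\bimp q$ by definition. After absorbing $p\land q$ this collapses to $p\lor q\lor(p\bimp q)$. This settles $e(p\land q)\equiv p\lor q\lor(p\bimp q)$ with no appeal to the earlier $e$-lemmas.

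It then remains to show $p\lor q\lor(p\bimp q)\vdash M$ and $M\vdash e(p\land q)$. For the forward direction, the conjunct $(p\to q)\lor(q\to p)$ follows from each disjunct of $p\lor q\lor(p\bimp q)$ by monotonicity (again $p\vdash q\to p$, $q\vdash p\to q$, and $p\bimp q\vdash p\to q$), and the conjunct $e(p\bimp q)$ follows from $p\bimp q$ by Lemma~\ref{lem:ebimp} and from each of $p,q$ by unwinding
$$e(p\bimp q)=e(A)\land e(B)=\bigl[(q\lor B)\to(p\lor A)\bigr]\land\bigl[(p\lor A)\to(q\lor B)\bigr]$$
and checking that under $p$ (resp.\ $q$) the consequents of both implications are outright derivable. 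For the reverse direction I would argue by $\lor$-elimination on $(p\to q)\lor(q\to p)$: under the disjunct $B=q\to p$ we have $q\lor B$ at once, and $e(A)$ then yields $p\lor A$; under the disjunct $A=p\to q$ we have $p\lor A$ at once, and $e(B)$ then yields $q\lor B$. Hence $M\vdash(q\lor B)\land(p\lor A)=e(p\land q)$. Chaining the three facts gives $e(p\land q)\equiv M\equiv p\lor q\lor(p\bimp q)$.

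The main obstacle is the explicit treatment of $e(p\bimp q)$ as the conjunction of the two implications between $q\lor B$ and $p\lor A$, together with the case-by-case verification that these implications are available under $p$, under $q$, and that they let $M$ force each of $q\lor B$ and $p\lor A$; everything else reduces to distributivity and the two monotonicity facts $q\vdash p\to q$ and $p\vdash q\to p$.
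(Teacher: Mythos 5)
Your proof is correct, and its first half is exactly the paper's: expand $e(p\land q)=(q\lor(q\to p))\land(p\lor(p\to q))$, distribute, and collapse the four disjuncts to $p\lor q\lor(p\bimp q)$ using $p\vdash q\to p$ and $q\vdash p\to q$. Where you genuinely diverge is in how the middle term $M=e(p\bimp q)\land((p\to q)\lor(q\to p))$ is tied in. The paper stays purely equational: it distributes $M$ over $(p\to q)\lor(q\to p)$ and simplifies each piece to an exact identity, $e(p\bimp q)\land(p\to q)\equiv q\lor(p\bimp q)$ and $e(p\bimp q)\land(q\to p)\equiv p\lor(p\bimp q)$, whence $M\equiv p\lor q\lor(p\bimp q)$. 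You instead close a cycle of one-way entailments, $p\lor q\lor(p\bimp q)\vdash M\vdash e(p\land q)$, which lets you avoid ever computing $M$ exactly: the forward leg is a disjunct-by-disjunct check (with Lemma~\ref{lem:ebimp} supplying $p\bimp q\vdash e(p\bimp q)$ --- a legitimate appeal, since that lemma precedes this one and does not depend on it), and the backward leg is just $\lor$-elimination plus modus ponens with the two implications making up $e(p\bimp q)=e(p\to q)\land e(q\to p)$. Both proofs hinge on the same unwinding of $e(p\bimp q)$ and the same case split on $(p\to q)\lor(q\to p)$; your cyclic organization buys lighter individual steps at the cost of not producing the explicit simplified forms of $e(p\bimp q)\land(p\to q)$ and $e(p\bimp q)\land(q\to p)$, which the paper gets as byproducts (though it never uses them later, so nothing is lost).
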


\begin{proof}
We have
\begin{align*}
e(p\land q)&\ \equiv\ (p\lor(p\to q))\land(q\lor(q\to p))\\
&\ \equiv\ (p\land q)\lor(p\land(q\to p))\lor(q\land(p\to q))\lor(p\bimp q)\\&\ \equiv\ p\lor q\lor(p\bimp q)
\end{align*}
and
\[
e(p\bimp q)\land((p\to q)\lor(q\to p))\ \equiv\ (e(p\bimp q)\land(p\to q))\lor(e(p\bimp q)\land(q\to p)).
\]
Since
\begin{align*}
e(p\bimp q)&\ \equiv e(p)\bimp e(q)\\
&\ \equiv\ (p\lor(p\to q))\bimp(q\lor(q\to p))\\&\ \equiv\ ((p\to q)\to(q\lor(q\to p)))\land((q\to p)\to(p\lor(p\to q))),
\end{align*}
we obtain
\begin{align*}
e(p\bimp q)\land(p\to q)
&\ \equiv\ ((p\to q)\to(q\lor(q\to p)))\land((q\to p)\to(p\lor(p\to q)))\land(p\to q)\\
&\ \equiv\ ((p\to q)\to(q\lor(q\to p)))\land(p\to q)\\
&\ \equiv\ (q\lor(q\to p))\land(p\to q)\\
&\ \equiv\ (q\land(p\to q))\lor((q\to p)\land(p\to q))\\
&\ \equiv\ q\lor(p\bimp q).
\end{align*}
Similarly,
\[
e(p\bimp q)\land(q\to p)\ \equiv\ p\lor(p\bimp q).
\]
Thus,
\[
e(p\bimp q)\land((p\to q)\lor(q\to p))\ \equiv\ p\lor q\lor(p\bimp q).
\]
\end{proof}

\begin{lemma}\label{cor:apq}
$\ag_n\ \equiv\  e^{n+2}((p\bimp q)\to(p\land q))$ for $n\ge0$.
\end{lemma}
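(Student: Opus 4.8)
The plan is to reduce the statement to the single base case $n=0$ and then carry out one short computation. The structural fact I would lean on is that the substitution $e$ respects derivability: since $\IPC$ is closed under substitution, $\alpha\equiv\beta$ implies $e(\alpha)\equiv e(\beta)$, and hence $e^n(\alpha)\equiv e^n(\beta)$ for every $n\ge0$. Combined with Lemma~\ref{lem:subst}, which gives $\ag_n\equiv e^n(\ag_0)$, it then suffices to establish the one equivalence $\ag_0\equiv e^2((p\bimp q)\to(p\land q))$; applying $e^n$ to both sides immediately yields $\ag_n\equiv e^n(\ag_0)\equiv e^{n+2}((p\bimp q)\to(p\land q))$.

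For the base case, write $\theta=(p\bimp q)\to(p\land q)$ and first compute $e(\theta)=e(p\bimp q)\to e(p\land q)$, using that $e$ distributes over the connectives by definition. By Lemma~\ref{lem:epq} we have $e(p\land q)\equiv e(p\bimp q)\land((p\to q)\lor(q\to p))$, and the Heyting identity $x\to(x\land y)\equiv x\to y$ then collapses this to
\[
e(\theta)\equiv e(p\bimp q)\to\big((p\to q)\lor(q\to p)\big).
\]
Next I would identify the two sides: since $e(p\bimp q)\equiv e(p)\bimp e(q)=e(p\to q)\land e(q\to p)=e(\cg_0)\land e(\bg_0)\equiv\cg_1\land\bg_1$ (the last step by Lemma~\ref{lem:subst}) and $(p\to q)\lor(q\to p)=\cg_0\lor\bg_0$, this gives $e(\theta)\equiv(\bg_1\land\cg_1)\to(\bg_0\lor\cg_0)$.

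It then remains to apply $e$ once more. Distributing $e$ over the connectives and invoking Lemma~\ref{lem:subst} to replace $e(\bg_i)$ by $\bg_{i+1}$ and $e(\cg_i)$ by $\cg_{i+1}$ gives
\[
e^2(\theta)\equiv e\big((\bg_1\land\cg_1)\to(\bg_0\lor\cg_0)\big)\equiv(\bg_2\land\cg_2)\to(\bg_1\lor\cg_1)=\ag_0,
\]
which is exactly $\ag_0$ by its definition in Definition~\ref{def:abcd}. This completes the base case and hence the proof.

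I expect the only genuine content to be the base-case computation, and within it the step that rewrites $e(\theta)$ via Lemma~\ref{lem:epq} together with $x\to(x\land y)\equiv x\to y$; once $e(\theta)$ is recognized as $(\bg_1\land\cg_1)\to(\bg_0\lor\cg_0)$, the remaining reductions are purely a matter of applying $e$ and quoting Lemma~\ref{lem:subst}. The main thing to be careful about is tracking the index shifts so that the two applications of $e$ to $\theta$ produce precisely the $(n{+}2,n{+}1)$-indexing appearing in the definition of $\ag_0$, and keeping clear the distinction between the literal substitution $e(\varphi)$ and the provable equivalents furnished by Lemma~\ref{lem:subst}.
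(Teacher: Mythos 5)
Your proposal is correct and follows essentially the same route as the paper: both reduce to the base case $\ag_0\equiv e^2((p\bimp q)\to(p\land q))$ via Lemma~\ref{lem:subst} and substitution-invariance of $\equiv$, establish that base case using Lemma~\ref{lem:epq}, and then lift by applying $e^n$. The only cosmetic difference is that where the paper verifies the core equivalence $e(p\bimp q)\to e(p\land q)\ \equiv\ e(p\bimp q)\to((p\to q)\lor(q\to p))$ by two one-directional derivations, you obtain it in one step from the identity $x\to(x\land y)\equiv x\to y$; the content is identical.
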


\begin{proof}
We first prove that $\alpha_0\equiv e^2((p\bimp q)\to(p\land q))$. By Lemma~\ref{lem:subst}, $\bg_n\equiv e^n(q\to p)$, $\cg_n\equiv e^n(p\to q)$, and $\bg_n\land\cg_n\ \equiv\ e^n(p\bimp q)$. Therefore,
\[
\ag_0\equiv e^2(p\bimp q)\to(e(p\to q)\lor e(q\to p)).
\]
Hence, it is sufficient to show
\[
e^2(p\bimp q)\to(e(p\to q)\lor e(q\to p))\ \equiv\ e^2((p\bimp q)\to(p\land q)).
\]
For this it is enough to show
\[
e(p\bimp q)\to((p\to q)\lor(q\to p))\ \equiv\ e((p\bimp q)\to(p\land q)).
\]
This is equivalent to showing
\begin{equation}\label{eq:ltor}
e(p\bimp q)\to((p\to q)\lor(q\to p))\ \vdash\ e(p\bimp q)\to e(p\land q)
\end{equation}
and
\begin{equation}\label{eq:rtol}
e(p\bimp q)\to e(p\land q)\ \vdash\ e(p\bimp q)\to((p\to q)\lor(q\to p)).
\end{equation}
But \eqref{eq:ltor} follows from
\begin{equation}\label{eq:ltorsimp}
e(p\bimp q)\land((p\to q)\lor(q\to p))\ \vdash\ e(p\land q)
\end{equation}
and \eqref{eq:rtol} follows from
\begin{equation}\label{eq:rtolsimp}
e(p\land q)\ \vdash\ (p\to q)\lor(q\to p).
\end{equation}
Both \eqref{eq:ltorsimp} and \eqref{eq:rtolsimp} in turn follow from Lemma~\ref{lem:epq}, proving that
 $\alpha_0\equiv e^2((p\bimp q)\to(p\land q))$. Thus, by Lemma~\ref{lem:monotbc}, $\alpha_n\equiv e^{n+2}((p\bimp q)\to(p\land q))$.
\end{proof}

\begin{lemma}\label{cor:xtoa}
If $e^{n+2}(p\bimp q)\vdash x$, then $x\to\ag_n\ \equiv\  \ag_n$.
\end{lemma}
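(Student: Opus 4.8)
The plan is to rewrite $\ag_n$ as an implication using Lemma~\ref{cor:apq} and then reduce the claim to a one-line application of exportation in $\IPC$. First I would invoke Lemma~\ref{cor:apq} to get $\ag_n\equiv e^{n+2}((p\bimp q)\to(p\land q))$. Since the substitution $e$ commutes with all connectives, and in particular with $\to$, this rewrites as $\ag_n\equiv e^{n+2}(p\bimp q)\to e^{n+2}(p\land q)$. It is convenient to abbreviate $P:=e^{n+2}(p\bimp q)$ and $Q:=e^{n+2}(p\land q)$, so that $\ag_n\equiv P\to Q$, while the hypothesis of the lemma becomes simply $P\vdash x$.

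The equivalence $x\to\ag_n\equiv\ag_n$ then has one direction for free, since $\ag_n\vdash x\to\ag_n$ holds in $\IPC$ for every formula $x$. For the converse I would apply exportation (currying), which yields $x\to\ag_n\equiv x\to(P\to Q)\equiv(x\land P)\to Q$ in $\IPC$.

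Finally, the hypothesis $P\vdash x$ gives $x\land P\equiv P$, and hence $(x\land P)\to Q\equiv P\to Q\equiv\ag_n$. Combining the two directions delivers $x\to\ag_n\equiv\ag_n$.

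The proof is short, and the only real content is the opening move: recognising that Lemma~\ref{cor:apq} already presents $\ag_n$ precisely as the implication $e^{n+2}(p\bimp q)\to e^{n+2}(p\land q)$, whose antecedent is exactly the formula $e^{n+2}(p\bimp q)$ appearing in the hypothesis. Once $\ag_n$ is written in this shape, the remainder is the standard $\IPC$ fact that $x\to(P\to Q)\equiv(x\land P)\to Q$ together with $x\land P\equiv P$ whenever $P\vdash x$; I do not anticipate any genuine obstacle beyond selecting this representation of $\ag_n$.
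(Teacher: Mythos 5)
Your proposal is correct and is essentially the paper's own proof: both invoke Lemma~\ref{cor:apq} to write $\ag_n$ as $e^{n+2}(p\bimp q)\to e^{n+2}(p\land q)$, apply exportation to get $(x\land e^{n+2}(p\bimp q))\to e^{n+2}(p\land q)$, and use the hypothesis to collapse $x\land e^{n+2}(p\bimp q)$ to $e^{n+2}(p\bimp q)$. The only cosmetic differences are your abbreviations $P$, $Q$ and your explicit remark that $e$ commutes with $\to$, which the paper leaves implicit in the definition of the substitution.
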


\begin{proof}
By Lemma~\ref{cor:apq},
\begin{align*}
x\to\ag_n&\ \equiv\ x\to((e^{n+2}(p\bimp q)\to e^{n+2}(p\land q))\\
&\ \equiv\ (x\land e^{n+2}(p\bimp q))\to e^{n+2}(p\land q).
\end{align*}
By assumption, $x\land e^{n+2}(p\bimp q)\equiv e^{n+2}(p\bimp q)$. Therefore,
\[
x\to\ag_n\ \equiv\ e^{n+2}(p\bimp q)\to e^{n+2}(p\land q)\ \equiv\ \ag_n.
\]
\end{proof}

\begin{lemma}\label{cor:daimp}
$\ag_{m+n+1}\to\ag_n\ \equiv\ \dg_{m+n}\to\ag_n\ \equiv\ \ag_n$ for all  $m,n\ge0$.
\end{lemma}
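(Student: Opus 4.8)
The plan is to reduce everything to the single equivalence $\ag_{m+n+1}\to\ag_n\equiv\ag_n$ and then read off the $\dg$-statement by splitting a disjunction. The engine is Lemma~\ref{cor:xtoa}, which says that $x\to\ag_n\equiv\ag_n$ whenever $e^{n+2}(p\bimp q)\vdash x$. I would first record that $e^{n+2}(p\bimp q)\equiv\bg_{n+2}\land\cg_{n+2}$: by Lemma~\ref{lem:subst} we have $\bg_k\equiv e^k(q\to p)$ and $\cg_k\equiv e^k(p\to q)$, and since $e$ commutes with $\land$ and $p\bimp q\equiv(p\to q)\land(q\to p)$, it follows that $e^k(p\bimp q)\equiv\bg_k\land\cg_k$.

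To obtain $\ag_{m+n+1}\to\ag_n\equiv\ag_n$ it therefore suffices to verify the entailment $\bg_{n+2}\land\cg_{n+2}\vdash\ag_{m+n+1}$ and invoke Lemma~\ref{cor:xtoa} with $x=\ag_{m+n+1}$. This entailment is immediate from the monotonicity recorded in Lemma~\ref{lem:monotbc}: iterating $\bg_j\vdash\bg_{j+1}$ gives $\bg_{n+2}\vdash\bg_{m+n+2}$ (valid for every $m\ge0$), and $\bg_{m+n+2}\vdash\ag_{m+n+1}$ is the instance $\bg_{i+1}\vdash\ag_i$ with $i=m+n+1$. Composing, $\bg_{n+2}\vdash\ag_{m+n+1}$, so a fortiori $\bg_{n+2}\land\cg_{n+2}\vdash\ag_{m+n+1}$, and Lemma~\ref{cor:xtoa} yields $\ag_{m+n+1}\to\ag_n\equiv\ag_n$.

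For the middle expression, I would expand $\dg_{m+n}=\ag_{m+n}\lor\ag_{m+n+1}$ and use the $\IPC$-equivalence $(\varphi\lor\psi)\to\chi\equiv(\varphi\to\chi)\land(\psi\to\chi)$ to get $\dg_{m+n}\to\ag_n\equiv(\ag_{m+n}\to\ag_n)\land(\ag_{m+n+1}\to\ag_n)$. The second conjunct is $\ag_n$ by the previous paragraph. For the first conjunct there are two cases: if $m\ge1$ it equals $\ag_n$ by the previous paragraph applied with $m-1$ in place of $m$, while if $m=0$ it is $\ag_n\to\ag_n\equiv\top$. Either way the conjunction collapses to $\ag_n$, completing the chain of equivalences.

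The computations are routine once the right reduction is in place; the only point requiring care is the bookkeeping of indices, namely that the hypothesis $e^{n+2}(p\bimp q)\vdash x$ of Lemma~\ref{cor:xtoa} is governed by the subscript on the target $\ag_n$ rather than on $x$, together with the harmless $m=0$ degeneracy in the $\dg$-computation where one conjunct becomes trivially $\top$ rather than $\ag_n$.
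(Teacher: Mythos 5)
Your proof is correct, and its skeleton coincides with the paper's: both reduce the first equivalence to Lemma~\ref{cor:xtoa} applied with $x=\ag_{m+n+1}$, and both then dispose of the middle term by expanding $\dg_{m+n}\to\ag_n$ as $(\ag_{m+n}\to\ag_n)\land(\ag_{m+n+1}\to\ag_n)$. Where you genuinely differ is in how the hypothesis $e^{n+2}(p\bimp q)\vdash\ag_{m+n+1}$ of Lemma~\ref{cor:xtoa} gets verified. The paper stays inside the substitution calculus: it rewrites $\ag_{m+n+1}$ via Lemma~\ref{cor:apq} as $e^{m+n+3}(p\bimp q)\to e^{m+n+3}(p\land q)$, unfolds $e^{m+n+3}(p\land q)$ by Lemma~\ref{lem:epq}, and finally appeals to Lemma~\ref{lem:ebimp}. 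You instead identify $e^{n+2}(p\bimp q)\equiv\bg_{n+2}\land\cg_{n+2}$ (from Lemma~\ref{lem:subst}) and chain the monotonicity facts of Lemma~\ref{lem:monotbc}: $\bg_{n+2}\vdash\bg_{m+n+2}\vdash\ag_{m+n+1}$. This is shorter and more elementary, replacing three lemmas by one, and it is sound: since $\equiv$ is mutual derivability, the entailment transfers across the identification, and your index bookkeeping (the $n+2$ in the hypothesis of Lemma~\ref{cor:xtoa} tracks the target $\ag_n$, not $x$) is exactly right. One cosmetic difference at the end: the paper collapses $(\ag_{m+n}\to\ag_n)\land\ag_n$ to $\ag_n$ directly by weakening, since $\ag_n\vdash\ag_{m+n}\to\ag_n$ holds for any antecedent, which avoids your case split on $m=0$ versus $m\ge1$; both arguments are fine.
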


\begin{proof}
We first show that $e^{n+2}(p\bimp q)\ \vdash\ \ag_{m+n+1}$ for all $m,n\ge0$. By Lemmas~\ref{lem:subst} and~\ref{cor:apq},
\[
\ag_{m+n+1}\ \equiv\ e^{m+n+1}(\alpha_0)\ \equiv\ e^{m+n+3}(p\bimp q)\to e^{m+n+3}(p\land q).
\]
Therefore, it suffices to show that $e^{n+2}(p\bimp q)\ \vdash\ e^{m+n+3}(p\land q)$. By Lemma~\ref{lem:epq},
\[
e^{m+n+3}(p\land q)\ \equiv\ e^{m+n+2}(p\lor q\lor (p\bimp q)).
\]
Thus, it suffices to show that $e^{n+2}(p\bimp q)\ \vdash\ e^{m+n+2}(p\bimp q)$. But this follows from Lemma~\ref{lem:ebimp}. Consequently,
$e^{n+2}(p\bimp q)\ \vdash\ \ag_{m+n+1}$ for all $m,n\ge0$. Applying
Lemma~\ref{cor:xtoa}
then yields $\ag_{m+n+1}\to\ag_n\ \equiv\ \ag_n$. This together with the definition of $\dg_{m+n}$ gives
\begin{eqnarray*}
\dg_{m+n}\to\ag_n\ & \equiv\ & (\ag_{m+n}\lor\ag_{m+n+1})\to\ag_n \\
\ & \equiv\ & (\ag_{m+n}\to\ag_n)\land(\ag_{m+n+1}\to\ag_n) \\
\ & \equiv\ & (\ag_{m+n}\to\ag_n)\land\ag_n \\
\ & \equiv\ & \ag_n,
\end{eqnarray*}
completing the proof.
\end{proof}

\begin{lemma}\label{lem:monot}
For all $m,n\ge0$ we have $(\ag_0\to\dg_1)\to\dg_0\ \vdash\ \dg_{m+n}\to\dg_m$.
\end{lemma}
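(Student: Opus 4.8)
The plan is to prove $(\ag_0\to\dg_1)\to\dg_0\vdash\dg_{m+n}\to\dg_m$ by induction, and the natural parameter to induct on is $n$, since for $n=0$ the conclusion $\dg_m\to\dg_m$ is a tautology of $\IPC$. The whole point of the hypothesis $(\ag_0\to\dg_1)\to\dg_0$ is that it should, under the substitution $e$, generate the family of ``step-down'' implications $(\ag_m\to\dg_{m+1})\to\dg_m$, and these are exactly what is needed to climb down from $\dg_{m+n}$ to $\dg_m$ one level at a time. So before running the induction I would first establish the single-step fact that $(\ag_0\to\dg_1)\to\dg_0\vdash\dg_{k+1}\to\dg_k$ for every $k\ge0$, and then chain these together.

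For the single step, I would apply the substitution $e$ repeatedly to the hypothesis. By Lemma~\ref{lem:subst} we have $\ag_k\equiv e^k(\ag_0)$ and $\dg_k\equiv e^k(\dg_0)$ (the lemma covers $\ag$ and $\dg$), and since $e$ commutes with the connectives by definition, applying $e^k$ to $(\ag_0\to\dg_1)\to\dg_0$ yields $(\ag_k\to\dg_{k+1})\to\dg_k$. The remaining work is to show that this formula, together with an appropriate monotonicity fact, derives $\dg_{k+1}\to\dg_k$ in $\IPC$. Here I expect to use the monotonicity relations from Lemma~\ref{lem:monotbc}: since $\dg_{k+1}=\ag_{k+1}\lor\ag_{k+2}$, and I would want to relate $\dg_{k+1}$ to $\ag_k\to\dg_{k+1}$. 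The key intuitionistic maneuver is that from a premise of the form $(x\to y)\to z$ one wants to deduce $y\to z$ whenever $y\vdash(x\to y)$, i.e.\ whenever $y$ implies the antecedent's own antecedent in the weak sense; more concretely, since $\ag_{k+1},\ag_{k+2}\vdash\ag_k$-related monotonicity should give $\dg_{k+1}\vdash(\ag_k\to\dg_{k+1})$, and then modus-ponens-style reasoning under the implication $(\ag_k\to\dg_{k+1})\to\dg_k$ produces $\dg_{k+1}\to\dg_k$.

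Once the single-step lemma $(\ag_0\to\dg_1)\to\dg_0\vdash\dg_{k+1}\to\dg_k$ is in hand for all $k$, the general statement follows by composing implications:
\[
\dg_{m+n}\to\dg_{m+n-1}\to\cdots\to\dg_{m+1}\to\dg_m,
\]
each arrow being an instance of the single step with $k=m+n-1,\dots,m$, and transitivity of $\vdash$-derivable implication in $\IPC$ closing the chain. Formally this is an induction on $n$: the base case $n=0$ is trivial, and the inductive step combines $\dg_{m+n}\to\dg_{m+n-1}$ (single step at $k=m+n-1$) with the induction hypothesis $\dg_{m+n-1}\to\dg_m$.

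The main obstacle I anticipate is the single-step derivation itself, specifically pinning down the exact intuitionistic deduction that turns $(\ag_k\to\dg_{k+1})\to\dg_k$ into $\dg_{k+1}\to\dg_k$. This requires verifying that $\dg_{k+1}\vdash\ag_k\to\dg_{k+1}$ (a weakening, which is immediate) is not by itself enough, and that one genuinely needs the relation between $\dg_{k+1}$'s disjuncts $\ag_{k+1},\ag_{k+2}$ and $\ag_k$ supplied by Lemma~\ref{lem:monotbc} to make the antecedent $\ag_k\to\dg_{k+1}$ actually derivable from $\dg_{k+1}$; getting the monotonicity bookkeeping exactly right, rather than the high-level structure, is where the care is needed.
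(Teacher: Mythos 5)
Your proposal is correct and matches the paper's own proof: the paper derives $\dg_1\vdash\dg_0$ from the hypothesis via the weakening $\dg_1\vdash\ag_0\to\dg_1$ plus modus ponens, then applies the substitution $e^m$ (Lemma~\ref{lem:subst}) to obtain $\dg_{m+1}\vdash\dg_m$ and chains the implications---exactly the single-step-plus-chaining structure you describe, with the substitution merely performed after the single step rather than before it. The one thing to fix is your closing worry: the weakening $\dg_{k+1}\vdash\ag_k\to\dg_{k+1}$ \emph{is} by itself enough (it is an instance of $y\vdash x\to y$, valid for arbitrary $x$), so modus ponens under $(\ag_k\to\dg_{k+1})\to\dg_k$ immediately yields $\dg_{k+1}\to\dg_k$; Lemma~\ref{lem:monotbc} and the relations among $\ag_{k+1},\ag_{k+2},\ag_k$ play no role here, and the paper's proof uses nothing beyond the weakening.
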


\begin{proof}
We have $\dg_1\ \vdash\ \ag_0\to\dg_1$. Also, $\ag_0\to\dg_1\ \vdash\ \dg_0$ by assumption. Therefore, $\dg_1\vdash\dg_0$. By Lemma~\ref{lem:subst}, $\dg_m\equiv e^m(\dg_0)$. Thus, $\dg_{m+1}\vdash\dg_m$ for all $m\ge0$. Thus,
\[
\dg_{m+n}\ \vdash\ \dg_{m+n-1}\ \vdash\ \cdots\ \vdash\ \dg_{m+1}\ \vdash\ \dg_m.
\]
\end{proof}

\begin{proposition}\label{lem:aad}
From $(\alpha_0\to\delta_1)\to\delta_0$ and $\varkappa_0$ it follows that
\begin{equation}\label{eq:aad}
\ag_m\lor\ag_{m+n+1}\ \equiv\ \dg_m
\end{equation}
and
\begin{equation}\label{eq:add}
\ag_m\lor\dg_{m+n}\ \equiv\ \dg_m
\end{equation}
for all $m,n\ge0$.
\end{proposition}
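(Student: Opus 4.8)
The plan is to establish the two equivalences \eqref{eq:aad} and \eqref{eq:add} by first deriving the base case $n=0$ of \eqref{eq:aad}, and then bootstrapping to all $n$ using the monotonicity consequences of the axioms already collected in \S6. The key observation is that $\dg_m = \ag_m \lor \ag_{m+1}$ by definition, so \eqref{eq:aad} with $n=0$ reads $\ag_m \lor \ag_{m+1} \equiv \dg_m$, which is immediate. The real content is to push the index $m+1$ up to $m+n+1$ while preserving the join with $\ag_m$.

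First I would record what the two hypotheses buy us. The axiom $\kg_0 = \ag_1 \to (\ag_0 \lor \bg_2)$, together with Lemma~\ref{lem:subst} (so that $\kg_m \equiv e^m(\kg_0)$ gives $\ag_{m+1} \to (\ag_m \lor \bg_{m+2})$ for all $m$), controls how $\ag_{m+1}$ sits over $\ag_m$. The axiom $(\ag_0\to\dg_1)\to\dg_0$, via Lemma~\ref{lem:monot}, yields $\dg_{m+n}\vdash\dg_m$, i.e.\ the $\dg$'s are decreasing in the index. I expect \eqref{eq:aad} to follow by an induction on $n$: assuming $\ag_m\lor\ag_{m+n+1}\equiv\dg_m$, one uses $\kg_{m+n+1}$ to replace $\ag_{m+n+2}$ by $\ag_{m+n+1}\lor\bg_{m+n+3}$ inside the join, then absorbs the surplus using Lemma~\ref{cor:daimp} (which gives $\ag_{k+1}\to\ag_n\equiv\ag_n$ and its $\dg$-analogue) and Lemma~\ref{lem:monotbc} (the basic monotonicity $\bg_k\vdash\cg_{k+2}$ etc.). The inductive step is where the two axioms must interact, so that is where I would spend the care.

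Once \eqref{eq:aad} is in hand for all $m,n$, equation \eqref{eq:add} should follow quickly: since $\dg_{m+n}=\ag_{m+n}\lor\ag_{m+n+1}$, we have
\[
\ag_m\lor\dg_{m+n}\ \equiv\ \ag_m\lor\ag_{m+n}\lor\ag_{m+n+1}\ \equiv\ (\ag_m\lor\ag_{m+n})\lor(\ag_m\lor\ag_{m+n+1}),
\]
and applying \eqref{eq:aad} to each of the two parenthesized joins (with parameter $n-1$ and $n$ respectively, taking care of the degenerate case $n=0$ separately) collapses everything to $\dg_m$. One direction of each equivalence is of course trivial from $\dg_m\vdash\dg_m$ and the definition of $\dg_m$ as a join; the substance is the reverse derivation $\dg_m\vdash\ag_m\lor\ag_{m+n+1}$, which is exactly what the decreasing behavior of $\dg$ from Lemma~\ref{lem:monot} supplies.

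The main obstacle I anticipate is the bookkeeping in the inductive step of \eqref{eq:aad}: correctly tracking which instance of $\kg$ and which monotonicity facts from Lemma~\ref{lem:monotbc} and Lemma~\ref{cor:daimp} are needed to absorb the extra $\bg$-disjunct that $\kg$ introduces, while keeping $\ag_m$ as a disjunct throughout. Everything reduces to derivations in $\IPC$ about the $\ag,\bg,\cg,\dg$ hierarchy, so no genuinely new idea beyond the lemmas of \S6 should be required; the difficulty is purely combinatorial alignment of indices.
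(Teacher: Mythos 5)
Your handling of \eqref{eq:add} is fine (it is essentially the paper's computation, using \eqref{eq:aad} twice instead of once plus Lemma~\ref{lem:monot}), but your plan for \eqref{eq:aad} interchanges the two directions of the equivalence, and the induction you propose does not close. Lemma~\ref{lem:monot} (the decreasing behaviour $\dg_{m+n}\vdash\dg_m$) supplies the direction $\ag_m\lor\ag_{m+n+1}\vdash\dg_m$: indeed $\ag_{m+n+1}\vdash\dg_{m+n}\vdash\dg_m$ and $\ag_m\vdash\dg_m$ trivially, so this half needs no induction and no use of $\kg$ at all. What Lemma~\ref{lem:monot} cannot supply is the direction you attribute to it, namely $\dg_m\vdash\ag_m\lor\ag_{m+n+1}$; that is exactly the half where $\kg$ must enter. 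Your inductive step, moreover, deploys the wrong instance of $\kg$ for this purpose: unfolding $\ag_{m+n+2}$ as $\ag_{m+n+1}\lor\bg_{m+n+3}$ via $\kg_{m+n+1}$ only helps with the easy direction, while if you try to push $\dg_m\vdash\ag_m\lor\ag_{m+n+1}$ up one index using $\kg_{m+n}$ you obtain $\dg_m\vdash\ag_m\lor\ag_{m+n}\lor\ag_{m+n+2}$, and the parasitic disjunct $\ag_{m+n}$ cannot be absorbed by Lemma~\ref{cor:daimp} or Lemma~\ref{lem:monotbc}: eliminating it amounts to an instance of \eqref{eq:aad} not yet proved, so the induction is circular.

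The repair is to apply $\kg$ exactly once, at the \emph{bottom} index, and let the monotonicity of the $\bg$'s do all the climbing: $\kg_m$ (i.e.\ $e^m(\kg_0)$, by Lemma~\ref{lem:subst}) gives $\ag_{m+1}\vdash\ag_m\lor\bg_{m+2}$; then $\bg_{m+2}\vdash\bg_{m+n+2}$ and $\bg_{m+n+2}\vdash\ag_{m+n+1}$ by Lemma~\ref{lem:monotbc}; hence $\dg_m=\ag_m\lor\ag_{m+1}\vdash\ag_m\lor\ag_{m+n+1}$ in one stroke, uniformly in $n$. This is precisely the paper's argument, carried out for $m=0$ and then transported to general $m$ by the substitution $e^m$. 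With that replacement your outline becomes a correct proof; as written, the substantive half of \eqref{eq:aad} is unproven.
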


\begin{proof}
It follows from Lemma~\ref{lem:monot} that $\dg_n\vdash\dg_0$ for all $n\ge0$, so $\ag_n\lor\ag_{n+1}\vdash\ag_0\lor\ag_1$, and hence $\ag_{n+1}\ \vdash\ \ag_0\lor\ag_1$. Therefore,
\begin{equation}\label{eq:kto1}
\ag_0\lor\ag_{n+1}\ \vdash\ \ag_0\lor\ag_1.
\end{equation}
On the other hand, from $\kg_0$ it follows that $\ag_1\vdash\ag_0\lor\bg_2$, and Lemma~\ref{lem:monotbc} implies that $\bg_2\vdash\bg_{n+2}$ and $\bg_{n+2}\vdash\ag_{n+1}$. Thus, $\ag_1\ \vdash\ \ag_0\lor\ag_{n+1}$. Consequently,
\begin{equation}\label{eq:1tok}
\ag_0\lor\ag_1\ \vdash\ \ag_0\lor\ag_{n+1}.
\end{equation}
From \eqref{eq:kto1} and \eqref{eq:1tok} it follows that $\ag_0\lor\ag_{n+1}\ \equiv\ \dg_0$ for all $n\ge0$. Applying $e^m$ yields \eqref{eq:aad}.

For \eqref{eq:add}, $\ag_m\lor\dg_{m+n}\ \equiv\ \ag_m\lor\ag_{m+n}\lor\ag_{m+n+1}$. By doubling $\ag_{m+n+1}$, applying \eqref{eq:aad}, and using the definition of $\dg_{m+n}$ we obtain:
\begin{eqnarray*}
\ag_m\lor\ag_{m+n}\lor\ag_{m+n+1} &\ \equiv &\ \ag_m\lor\ag_{m+n+1}\lor\ag_{m+n}\lor\ag_{m+n+1} \\
&\ \equiv &\ \dg_m\lor\ag_{m+n}\lor\ag_{m+n+1}\ \equiv\ \dg_m\lor\dg_{m+n},
\end{eqnarray*}
which in turn is equivalent to $\dg_m$ by Lemma~\ref{lem:monot}.
\end{proof}

\section{Incompleteness}

In this final section we prove our main incompleteness results. Our main technical tool is Lemma~\ref{lem:Sthend0} in which we show
that if a bi-Heyting algebra from $\mathcal V(\Sh)$ is complete, then it lies in a proper subvariety of $\mathcal V(\Sh)$.
From this we derive that no variety of bi-Heyting algebras in the interval $[\mathcal V(\F),\mathcal V(\Sh)]$ is generated
by its complete algebras. This yields continuum many BH-logics that are topologically incomplete, as well as continuum many
intermediate logics that are incomplete with respect to complete bi-Heyting algebras.

\begin{lemma}\label{lem:genbelow}
Let $X$ be an Esakia space and $A,B \subseteq X$. If $A\subseteq\dn B$, then $\Cl(A)\subseteq\dn\Cl(B)$.
\end{lemma}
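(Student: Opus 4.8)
The plan is to reduce the statement to the single fact that in an Esakia space the down-set of a \emph{closed} set is again closed. Granting this, the lemma is immediate: since $B\subseteq\Cl(B)$ we have $A\subseteq\dn B\subseteq\dn\Cl(B)$, and $\dn\Cl(B)$ is closed because $\Cl(B)$ is; as $\Cl(A)$ is the smallest closed set containing $A$, it follows that $\Cl(A)\subseteq\dn\Cl(B)$. So the entire content lies in proving that $\dn F$ is closed whenever $F$ is closed.

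To prove that, I would work directly from the definition of Esakia space, combining the two continuity conditions with compactness. The idea is to realize $\dn F$ as an intersection of clopen sets. Since $X$ is a Stone space, $F=\bigcap\{U : U\text{ clopen},\ F\subseteq U\}$, and I claim
\[
\dn F=\bigcap\{\dn U : U\text{ clopen},\ F\subseteq U\}.
\]
The inclusion $\subseteq$ is just monotonicity of $\dn$. For $\supseteq$ I take $x$ in the right-hand intersection and argue by contradiction: if $x\notin\dn F$, then $\up x\cap F=\emp$, so the closed sets $\{U\cap\up x : U\text{ clopen},\ F\subseteq U\}$ have empty intersection inside the compact set $\up x$ (compact because it is closed by condition (i) of the Esakia axioms and $X$ is compact). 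By compactness some finite subfamily already has empty intersection, and intersecting the corresponding finitely many clopen supersets of $F$ yields a single clopen $U\supseteq F$ with $\up x\cap U=\emp$, i.e.\ $x\notin\dn U$, contradicting the choice of $x$. This establishes the displayed equality, and since each $\dn U$ is clopen by condition (ii), $\dn F$ is an intersection of clopen sets and hence closed.

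The main obstacle is exactly the nontrivial inclusion $\supseteq$; everything else is formal. Its crux is to convert the hypothesis $\up x\cap F=\emp$ into the existence of a \emph{single} clopen set $U\supseteq F$ disjoint from $\up x$, which is where compactness of $\up x$ (guaranteed by axiom (i)) enters, after which axiom (ii) lets one pass from $U$ to the clopen set $\dn U$. One must only take care that the finite subfamily produced by compactness again lies in the family, but this is automatic, since a finite intersection of clopen supersets of $F$ is again a clopen superset of $F$.
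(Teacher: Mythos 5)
Your proof is correct and follows essentially the same route as the paper: both reduce the lemma, via $A\subseteq\dn B\subseteq\dn\Cl(B)$ and minimality of the closure, to the single fact that in an Esakia space the downset of a closed set is closed. The only difference is that the paper cites this fact as standard (valid in any compact ordered space whose order is closed), whereas you prove it from the Esakia axioms — realizing $\dn F$ as $\bigcap\{\dn U : U\ \text{clopen},\ F\subseteq U\}$ and using compactness of $\up x$ together with condition (ii) — which is a correct and self-contained argument.
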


\begin{proof}
We have $A\subseteq\dn B\subseteq\dn\Cl(B)$. The downset of a closed set is closed in every Esakia space (in fact, in every compact ordered space where the order relation is closed). Thus, $\Cl(A)\subseteq\Cl\dn\Cl(B)=\dn\Cl(B)$.
\end{proof}

\begin{lemma}\label{lem:Sthend0}
Let $\fB$ be a complete bi-Heyting algebra. If $\fB\models\Sh$, then $\fB\models\delta_0$.
\end{lemma}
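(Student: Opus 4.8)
The goal is to prove Lemma~\ref{lem:Sthend0}: if $\fB$ is a \emph{complete} bi-Heyting algebra validating $\Sh$, then $\fB$ validates $\dg_0$. The natural strategy is to work on the dual side. Since $\fB$ is a complete bi-Heyting algebra, by Theorem~\ref{thm:cBH} its BH-space $X$ is extremally order-disconnected, and by \eqref{eq:bihjoinmeet} infinite joins and meets in the algebra of clopen upsets are computed as $\bigvee U_i = \Cl(\bigcup U_i)$ and $\bigwedge U_i = \Int(\bigcap U_i)$. The plan is to fix an arbitrary valuation $\sV$ interpreting the variables $p,q$ as clopen upsets of $X$, and show that under the constraints imposed by $\Sh$ one forces $\sV(\dg_0)=1$, i.e.\ $\sV(\dg_0)=X$.

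\textbf{Exploiting the axioms of $\Sh$.}
The key is to use Proposition~\ref{lem:aad}: since $\fB\models\Sh$, it validates $(\ag_0\to\dg_1)\to\dg_0$ and $\kg_0$, so the equivalences $\ag_m\lor\ag_{m+n+1}\equiv\dg_m$ and $\ag_m\lor\dg_{m+n}\equiv\dg_m$ hold as algebraic identities. In particular, interpreting under $\sV$, for all $n\ge0$ we get $\sV(\ag_0)\lor\sV(\ag_{n+1})=\sV(\dg_0)$. I would then pass to the infinite join over all $n$: completeness of $\fB$ guarantees that $\bigvee_n \sV(\ag_n)$ exists, and the identity $\sV(\dg_0)=\sV(\ag_0)\lor\sV(\ag_{n+1})$ holding uniformly in $n$ should let me conclude $\sV(\dg_0)=\bigvee_n\sV(\ag_n)$. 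The crucial observation is that $\dg_0\to\dg_0=1$ combined with the monotonicity chain (Lemma~\ref{lem:monot} gives $\dg_{n+1}\vdash\dg_n$ in $\IPC$, hence $\sV(\dg_{n+1})\le\sV(\dg_n)$), together with \eqref{eq:add}, pins down $\sV(\dg_0)$ as a fixed element lying above every $\sV(\ag_n)$, so it must coincide with their join.

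\textbf{The completeness argument and the role of $\bb_2$.}
Computing the join as a closure via \eqref{eq:bihjoinmeet}, I would write $\sV(\dg_0)=\Cl(\bigcup_n U_n)$ where $U_n=\sV(\ag_n)$. The heart of the matter is to show this closure is all of $X$; here is where Lemma~\ref{lem:genbelow} and the $\bb_2$-axiom enter. The axiom $\bb_2$, through Theorem~\ref{thm:bb2kripke}, forces the order-topological condition $(**)$ to hold hereditarily in $X$: for clopen downsets with $A\cap B=A\cap C=B\cap C=D$ and $D$ nowhere cofinal in each, one gets $D=\emp$. Translating the algebraic relations among the $\ag_n$, $\bg_n$, $\cg_n$ into containments of the corresponding clopen downsets, and using $\ag_m\lor\ag_{m+n+1}=\dg_m$ to produce the join/meet configuration required by $(**)$, I would argue that the complement of $\sV(\dg_0)$ is a downset that is nowhere cofinal in itself and hence empty, giving $\sV(\dg_0)=X=1$.

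\textbf{The main obstacle.}
The delicate step is the transition from the finite identities $\sV(\ag_0)\lor\sV(\ag_{n+1})=\sV(\dg_0)$ to the \emph{infinitary} conclusion, and then extracting from the join-as-closure formula the precise nowhere-cofinal configuration that $(**)$ rules out. The finite algebraic identities do not by themselves yield $\dg_0=1$ in a general bi-Heyting algebra (indeed $\fA$ itself is a counterexample that is not complete); it is exactly completeness---via \eqref{eq:bihjoinmeet}---that converts these finite relations into a statement about the topological closure, and exactly $\bb_2$ that prohibits the relevant ``limit point escaping to infinity'' phenomenon witnessed by the point $\infty$ in $X_{\mathscr F}$. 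Getting the bookkeeping right so that the clopen downsets $X\setminus U_n$ genuinely satisfy the hypotheses of $(**)$ will be the technically demanding core of the proof.
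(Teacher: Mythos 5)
Your overall plan --- pass to the dual extremally order-disconnected BH-space and use $\bb_2$ via Theorem~\ref{thm:bb2kripke} to block a refutation of $\dg_0$ --- is indeed the paper's strategy in outline, but the mechanism you propose collapses at exactly the step you yourself flag as ``the technically demanding core.'' The identity $\sV(\dg_0)=\bigvee_n\sV(\ag_n)$ that you extract from \eqref{eq:aad} is true but carries no information: by \eqref{eq:aad} with $m=0$, every $\sV(\ag_{n+1})$ already lies below $\sV(\dg_0)=\sV(\ag_0)\lor\sV(\ag_1)$, so this supremum is really a finite join and completeness plays no role in it. Worse, your intended conclusion --- that $X\setminus\sV(\dg_0)$ is ``nowhere cofinal in itself and hence empty'' --- is circular rather than a consequence of $(**)$: by definition $D$ is nowhere cofinal in $E$ iff $\dn(E\setminus D)=E$, so taking $E=D$ gives $\dn\emp=D$, i.e.\ being nowhere cofinal in oneself literally \emph{is} being empty. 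The condition $(**)$ can only be applied to a configuration of \emph{three} clopen downsets $A,B,C$ with $A\cap B=A\cap C=B\cap C=D$ and $D$ nowhere cofinal in each of them, and your proposal never produces such a triple.

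The missing idea is to use infinite \emph{meets}, not joins, and to split the sequence $(\ag_n)$ into the three residue classes modulo $3$: set $a_i=\bigwedge_n\sV(\ag_{3n+i})$ for $i=0,1,2$. In a complete bi-Heyting algebra finite joins distribute over infinite meets (because $x\lor(-)$ has the left adjoint $x\leftarrow(-)$), so
\[
a_i\lor a_j=\bigwedge_{n,m}\bigl(\sV(\ag_{3n+i})\lor\sV(\ag_{3m+j})\bigr),
\]
and by \eqref{eq:aad} together with the monotonicity from Lemma~\ref{lem:monot} each of these pairwise joins equals $d:=\bigwedge_k\sV(\dg_k)$. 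Complementing gives clopen downsets $A_0,A_1,A_2,D$ with $A_i\cap A_j=D$ for $i\neq j$, and $D\neq\emp$ because $d\le\sV(\dg_0)\neq X$. The remaining work --- where completeness enters a second time, via \eqref{eq:bihjoinmeet} computing $d$ as $\Int\bigl(\bigcap_k\sV(\dg_k)\bigr)$, hence $D$ as $\Cl\bigl(\bigcup_k\cV{\dg_k}\bigr)$ --- is to show $D$ is nowhere cofinal in each $A_i$, using the auxiliary sets $\cV{\ag_{3n+i}}\setminus\cV{\dg_{3n+i}}$, the syntactic facts \eqref{eq:add} and Lemma~\ref{cor:daimp}, and finally Lemma~\ref{lem:genbelow}. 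Then $(**)$ fails, refuting $\bb_2$ and contradicting $\fB\models\Sh$. Note that the contradiction is reached at the level of $d$, not of $\sV(\dg_0)$ directly; your formulation ``force $\sV(\dg_0)=X$'' cannot be made to work with $(**)$ as stated.
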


\begin{proof}
Let $X$ be the dual BH-space of $\fB$. By Theorem~\ref{thm:cBH}, $X$ is extremally order-disconnected. We identify the elements of $\fB$ with the clopen upsets of $X$.

Suppose $\fB\models\Sh$ and $\fB\not\models\delta_0$. Then there is a valuation $\sV$ on clopen upsets of $X$ such that $\sV(\delta_0)\neq X$. We will utilize Theorem~\ref{thm:bb2kripke} to refute $\bb_2$ on $\fB$, thereby arriving at a contradiction.

Let
\[
a_i=\bigwedge_n\sV(\ag_{3n+i}),\quad i=0,1,2.
\]
Since in a complete bi-Heyting algebra finite joins distribute over infinite meets, we have
\begin{equation}\label{eq:bidistr}
a_0\lor a_1 = \left(\bigwedge_n\sV(\ag_{3n})\right) \vee \left(\bigwedge_m\sV(\ag_{3m+1})\right) = \bigwedge_{n,m}\left(\sV(\ag_{3n})\lor\sV(\ag_{3m+1})\right).
\end{equation}
Therefore, by Proposition \ref{lem:aad} (see \eqref{eq:aad}),
\[
a_0\lor a_1=\bigwedge_{n,m}\sV(\ag_{3n}\lor\ag_{3m+1})=\bigwedge_{n,m}\sV(\dg_{\min(3n,3m+1)}).
\]
By Lemma~\ref{lem:monot}, if $\fB\models\Sh$, then $\sV(\dg_n)\subseteq\sV(\dg_m)$ for $m<n$. It follows that the set $\{\sV(\dg_{\min(3n,3m+1)})\mid m,n\ge0\}$ contains infinitely many elements from the decreasing sequence $\{\sV(\dg_k)\mid k\ge0\}$. Thus, $\bigwedge_{n,m}\sV(\dg_{\min(3n,3m+1)})=\bigwedge_k\sV(\dg_k)$, and so $a_0\lor a_1 = \bigwedge_k\sV(\dg_k)$.
Similarly, denoting $\bigwedge_k\sV(\dg_k)$ by $d$, we have $a_0\lor a_2=a_1\lor a_2=d$.


Let $\cV{\ag_n}=X\setminus\sV(\ag_n)$, $\cV{\dg_n}=X\setminus\sV(\dg_n)$, $A_i=X\setminus a_i$, and $D=X\setminus d$ be the corresponding clopen downsets. Then
\[
A_0\cap A_1=A_0\cap A_2=A_1\cap A_2=D.
\]
Moreover, $D\neq\emp$ since $\sV(\delta_0)\neq X$. By Theorem~\ref{thm:bb2kripke}, to show that $\bb_2$ is refuted on $\fB$, it only remains to show that $D$ is nowhere cofinal in $A_0, A_1, A_2$.


We show that $D$ is nowhere cofinal in $A_0$. 
Since we are in an extremally order-disconnected BH-space, it follows from (\ref{eq:bihjoinmeet}) that
\begin{equation}\label{eq:meetint}
\bigwedge_n\sV(\dg_n)=\Int\left(\bigcap_n\sV(\dg_n)\right).
\end{equation}
Therefore,
\[
D=X\setminus\bigwedge_n\sV(\dg_n)=
X\setminus\Int\left(\bigcap_n\sV(\dg_n)\right)
=\Cl\left(\bigcup_n\cV{\dg_n}\right).
\]
Similarly,
\[
A_0=X\setminus\bigwedge_n\sV(\ag_{3n})=\Cl\left(\bigcup_n\cV{\ag_{3n}}\right).
\]

Let $D'=\bigcup_n\cV{\dg_n}$, $a'_n=\cV{\ag_n}\setminus\cV{\dg_n}$, and $A_0'=\bigcup_n a'_{3n}$.
We then have $D=\mathsf{cl}D'$ and $A_0' \subseteq A_0$.

\begin{claim}\label{claim}
$A_0'\cap D'=\emp$ and $D'\subseteq\dn A'_0$.
\end{claim}
\begin{proof}[Proof of claim]
We first show that $A_0'\cap D'=\emp$. \color{black} We have
\[
a'_{3n}\cap\cV{\dg_m}= (\cV{\ag_{3n}}\setminus\cV{\dg_{3n}})\cap\cV{\dg_m} =
(\cV{\ag_{3n}}\cap\cV{\dg_m})\setminus\cV{\dg_{3n}}.
\]
We show that $a'_{3n}\cap\cV{\dg_m} = \varnothing$.
Since $\fB\models\Sh$, we can apply  Lemma~\ref{lem:monot} and Proposition~\ref{lem:aad}. First suppose that $m\le3n$. Then Lemma~\ref{lem:monot} yields that $\cV{\dg_m}\setminus\cV{\dg_{3n}}=\emp$. Therefore, $a'_{3n}\cap\cV{\dg_m} = \varnothing$. Next suppose that $m>3n$. By Proposition~\ref{lem:aad} (see \eqref{eq:add}),
\[
\cV{\ag_{3n}}\cap\cV{\dg_m}=\cV{\ag_{3n}\lor\dg_m}=\cV{\dg_{3n}}.
\]
Thus,
$a'_{3n}\cap\cV{\dg_m}=\emp$ in this case too. This implies that $A_0'\cap D'=\emp$.

We next show that $D'\subseteq\dn A'_0$. If $x\in D'=\bigcup_n\cV{\dg_n}$, then $x\in\cV{\dg_{3k}}$ for some $k$ (by Lemma~\ref{lem:monot}). Hence,  $x\in\cV{\ag_{3k}}$ as $\dg_{3k}=\ag_{3k}\lor\ag_{3k+1}$, so $\cV{\dg_{3k}}\subseteq\cV{\ag_{3k}}$. By Lemma~\ref{cor:daimp}, $\dg_{3k}\to\ag_{3k}\equiv \ag_{3k}$. Therefore,
\[
\dn(\cV{\ag_{3k}}\setminus\cV{\dg_{3k}})=\cV{\dg_{3k}\to\ag_{3k}}=\cV{\ag_{3k}},
\]
and so $x\in\dn(\cV{\ag_{3k}}\setminus\cV{\dg_{3k}})\subseteq\dn A'_0$.
\end{proof}


We are ready to prove that $D$ is nowhere cofinal in $A_0$. By Remark~\ref{rem: nowhere dense}(1), it is sufficient to find $S\subseteq A_0$ with $D\cap S=\emp$ and $D\subseteq\dn S$. Let $S=\closure(A'_0)$. As $A_0'\subseteq A_0$ and $A_0$ is clopen, $\closure(A_0')\subseteq A_0$. Therefore, $S \subseteq A_0$. By Claim~\ref{claim}, $A'_0\cap D'=\emp$. Since $A'_0$ is open, we have $A'_0\cap\mathsf{cl}(D')=\emp$, so $A'_0\cap D=\emp$. Because $D$ is clopen, $\mathsf{cl}(A'_0)\cap D=\emp$. Thus, $S\cap D=\emp$. Finally, $D'\subseteq{\downarrow}A_0'$ by Claim~\ref{claim}. Therefore, we may apply Lemma \ref{lem:genbelow} to obtain that $D=\closure(D')\subseteq{\downarrow}\closure(A_0')=\dn S$. Thus, $D$ is nowhere cofinal in $A_0$.
The proof that $D$ is nowhere cofinal in $A_1$ and $A_2$ is similar.

Consequently, $(**)$ does not hold in $X$. By Theorem~\ref{thm:bb2kripke}, this means that $\bb_2$ is refuted on $\fB$, contradicting $\fB\models\Sh$.
\end{proof}


\begin{theorem}\label{thm:incompleteness}
No variety in the interval $[\mathcal V(\F),\mathcal V(\Sh)]$ is generated by its complete algebras.
\end{theorem}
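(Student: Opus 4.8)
The plan is to reduce the theorem to the already-established Lemma~\ref{lem:Sthend0} by an abstract argument about the closure properties of varieties. The key observation is that the witnessing algebra $\fA$ lies in \emph{every} variety $\mathcal V$ in the interval $[\mathcal V(\F),\mathcal V(\Sh)]$. Indeed, $\mathcal V(\F)$ is by definition the variety generated by $\fA$, so $\fA\in\mathcal V(\F)\subseteq\mathcal V$. Combined with Corollary~\ref{cor:ASnod0}, which says that $\fA\not\models\delta_0$, this shows that $\delta_0$ fails somewhere in $\mathcal V$.

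By contrast, I would next show that $\delta_0$ holds in every complete member of $\mathcal V$. If $\fB\in\mathcal V$ is complete, then $\fB\in\mathcal V\subseteq\mathcal V(\Sh)$, so $\fB\models\Sh$, and Lemma~\ref{lem:Sthend0} yields $\fB\models\delta_0$. Hence the class $\mathcal K$ of complete algebras in $\mathcal V$ is contained in the class $\mathcal W=\{\fB\mid\fB\models\delta_0\}$ of bi-Heyting algebras validating $\delta_0$.

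To conclude, I would use that $\mathcal W$ is itself a variety: validity of the HB-formula $\delta_0$ is equivalent to the equation $\delta_0\approx 1$ in the bi-Heyting signature, and a class cut out by an equation is closed under homomorphic images, subalgebras, and products. Thus, if $\mathcal V$ were generated by its complete members, so that $\mathcal V$ equals the variety generated by $\mathcal K$, then from $\mathcal K\subseteq\mathcal W$ together with the closure of $\mathcal W$ we would obtain $\mathcal V\subseteq\mathcal W$, i.e.\ $\mathcal V\models\delta_0$. This contradicts $\fA\in\mathcal V$ and $\fA\not\models\delta_0$. Therefore no variety in the interval is generated by its complete members.

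All the genuine mathematical content here is carried by Lemma~\ref{lem:Sthend0}; the present theorem merely packages it, so I do not expect a serious obstacle. The one point that requires a little care is keeping separate the two distinct roles played by the endpoints of the interval: the lower endpoint $\mathcal V(\F)$ ensures that the $\delta_0$-refuting algebra $\fA$ belongs to $\mathcal V$, while the upper endpoint $\mathcal V(\Sh)$ ensures that every complete member of $\mathcal V$ validates $\Sh$ and hence, by Lemma~\ref{lem:Sthend0}, validates $\delta_0$.
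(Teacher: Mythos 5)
Your proof is correct and follows essentially the same route as the paper's: both use Corollary~\ref{cor:ASnod0} (via $\fA\in\mathcal V(\F)\subseteq\mathcal V$) to show $\mathcal V\not\models\delta_0$ and Lemma~\ref{lem:Sthend0} (via $\mathcal V\subseteq\mathcal V(\Sh)$) to show every complete member of $\mathcal V$ validates $\delta_0$. The only difference is that you spell out the step the paper leaves implicit, namely that validity of $\delta_0$, being equational, is preserved under the variety generated by the complete members.
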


\begin{proof}
Let $\mathcal V\in[\mathcal V(\F),\mathcal V(\Sh)]$ and let $\fB$ be a complete algebra in $\mathcal V$. Since $\mathcal V\subseteq\mathcal V(\Sh)$, it follows from Lemma~\ref{lem:Sthend0} that $\fB\models\delta_0$. On the other hand, since $\mathcal V(\F)\subseteq\mathcal V$, it follows from Corollary~\ref{cor:ASnod0} that $\mathcal V\not\models\delta_0$. Thus, $\mathcal V$ cannot be generated by its complete algebras.
\end{proof}

\begin{corollary}\label{cor:top-incomplete}
No logic in the interval $[\Sh,\F]$ is topologically complete.
\end{corollary}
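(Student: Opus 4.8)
The plan is to derive the corollary from Theorem~\ref{thm:incompleteness} by translating the algebraic statement ``no variety in $[\mathcal V(\F),\mathcal V(\Sh)]$ is generated by its complete members'' into the logical statement about topological completeness. The first step is to pass from logics to varieties. Since the assignment $\Log\mapsto\mathcal V(\Log)$ is order-reversing, any HB-logic $\Log$ with $\Sh\subseteq\Log\subseteq\F$ satisfies $\mathcal V(\F)\subseteq\mathcal V(\Log)\subseteq\mathcal V(\Sh)$, that is, $\mathcal V(\Log)\in[\mathcal V(\F),\mathcal V(\Sh)]$. Recall moreover that each intermediate HB-logic $\Log$ is complete with respect to $\mathcal V(\Log)$, so nothing is lost in this passage, and $\Log$ can be analyzed entirely through its variety.

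The crux is the bridge between topological completeness and generation of the variety by complete members; I would establish the single implication that suffices here: if $\Log$ is topologically complete, then $\mathcal V(\Log)$ is generated by its complete members. The key observation is that the topological semantics interprets each HB-formula in the bi-Heyting algebra attached to a topological space, and every such algebra is \emph{complete}. By Theorem~\ref{thm:cBH}, together with the description of infinite joins and meets via closure and interior in~\eqref{eq:bihjoinmeet}, these algebras are precisely the complete bi-Heyting algebras, dual to the extremally order-disconnected BH-spaces. Consequently, if $\Log$ is the logic of some class of topological spaces, then $\Log$ is complete with respect to a class of complete bi-Heyting algebras, each of which validates $\Log$ and therefore lies in $\mathcal V(\Log)$. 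Since completeness of $\Log$ with respect to a subclass of its variety is exactly the assertion that this subclass and $\mathcal V(\Log)$ satisfy the same formulas, i.e.\ that the subclass generates $\mathcal V(\Log)$, topological completeness of $\Log$ forces $\mathcal V(\Log)$ to be generated by its complete members.

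Putting these together, suppose toward a contradiction that some $\Log\in[\Sh,\F]$ were topologically complete. By the first step $\mathcal V(\Log)\in[\mathcal V(\F),\mathcal V(\Sh)]$, and by the bridge $\mathcal V(\Log)$ would be generated by its complete members, directly contradicting Theorem~\ref{thm:incompleteness}. Hence no logic in $[\Sh,\F]$ is topologically complete. The main obstacle is the second paragraph: one must pin down the semantic characterization of topological completeness for HB-logics and verify that the topological models are exactly (or at least are among) the complete bi-Heyting algebras in the variety; all of this rests on Theorem~\ref{thm:cBH} and the closure/interior formulas in~\eqref{eq:bihjoinmeet}, after which the reduction to Theorem~\ref{thm:incompleteness} is immediate.
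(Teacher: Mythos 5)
Your proof is correct, and it reaches the corollary by a different decomposition than the paper. The paper's own proof does not use the \emph{statement} of Theorem~\ref{thm:incompleteness} as a black box: it takes an arbitrary topological model of $\Log$, i.e.\ a space $X$ such that $\Op(X)$ is a bi-Heyting algebra validating $\Log$, notes that $\Op(X)$ is then a \emph{complete} bi-Heyting algebra validating $\Sh$, and extracts from the \emph{proof} of the theorem (really Lemma~\ref{lem:Sthend0} together with Corollary~\ref{cor:ASnod0}) a concrete separating formula: $\Op(X)\models\delta_0$, while $\Log\not\vdash\delta_0$ because $\fA$ validates $\Log\subseteq\F$ and refutes $\delta_0$. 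You instead keep Theorem~\ref{thm:incompleteness} opaque and supply the bridge yourself: topological completeness of $\Log$ implies that $\mathcal V(\Log)$ is generated by its complete members, via the order-reversal $\Log\mapsto\mathcal V(\Log)$, algebraic completeness of $\Log$ with respect to $\mathcal V(\Log)$, and the Birkhoff-type fact that a subclass of a variety validating exactly the same formulas generates it. Your route costs these (standard, but unstated in the paper) universal-algebraic facts and buys a clean modular reduction; the paper's route is more concrete and yields the explicit witness $\delta_0$, which is what makes the subsequent ``continuum many'' corollary immediate. Two blemishes in your write-up, neither fatal: the completeness of $\Op(X)$ needs neither Theorem~\ref{thm:cBH} nor \eqref{eq:bihjoinmeet} --- it holds simply because arbitrary unions of open sets are open, so citing the duality there is off-target; and your claim that the topological algebras are \emph{precisely} the complete bi-Heyting algebras is false in the direction you do not use (a complete bi-Heyting algebra need not be spatial; e.g.\ an atomless complete Boolean algebra is bi-Heyting but is not isomorphic to $\Op(X)$ for any space $X$). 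Since your argument only uses the true direction --- every topological model is a complete bi-Heyting algebra --- the overstatement does no harm.
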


\begin{proof}
Let ${\sf L}\in[\Sh,\F]$ and let $X$ be a topological space such that the algebra $\Op(X)$ of open subsets of $X$ is a bi-Heyting algebra validating $\sf L$. It follows from the proof of Theorem~\ref{thm:incompleteness} that ${\sf L}\not\vdash\delta_0$ but that $\Op(X)\models\delta_0$. Thus, $\sf L$ is topologically incomplete.
\end{proof}

In \cite{Lit02} Litak utilized the Jankov-Fine technique of frame formulas to prove that the interval $[\She,\Fi]$ contains continuum many intermediate logics. Since this technique is also applicable to HB-logics,
as an immediate consequence of Litak's result and Corollary~\ref{cor:top-incomplete} we obtain:

\begin{corollary}
There are continuum many extensions of $\HB$ that are topologically incomplete.
\end{corollary}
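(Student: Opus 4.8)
The plan is to combine two facts already established: every HB-logic in the interval $[\Sh,\F]$ is topologically incomplete (Corollary~\ref{cor:top-incomplete}), and this interval is large. The only substantive point is to confirm that $[\Sh,\F]$ contains continuum many \emph{distinct} HB-logics; once this is secured, the corollary is immediate.

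First I would recall the shape of Litak's argument for the intuitionistic interval $[\She,\Fi]$. It rests on the Jankov--Fine method: one fixes a countably infinite family of finite rooted frames that is independent, in the sense that each frame refutes its own Jankov--Fine formula while validating the Jankov--Fine formulas of all the others. Taking, for each of the $2^{\aleph_0}$ subsets of this family, the intermediate logic axiomatized over $\She$ by the corresponding Jankov--Fine formulas then yields continuum many pairwise distinct logics in $[\She,\Fi]$.

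Next I would transfer this construction to the HB-setting. The decisive observation is that the frames in the family are \emph{finite}, so each upset algebra $\Up(F)$ is a finite Heyting algebra and hence automatically bi-Heyting. Moreover the Jankov--Fine formulas are $\IPC$-formulas, and therefore a fortiori $\HB$-formulas. Thus the very same family, now used to axiomatize extensions of $\Sh$, produces continuum many HB-logics in $[\Sh,\F]$: for any two distinct subsets there is a frame $F$ in their symmetric difference whose finite bi-Heyting algebra $\Up(F)$ separates the two HB-logics in the enriched signature. This is precisely the content recorded in Remark~\ref{rem:continuum}.

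Finally, applying Corollary~\ref{cor:top-incomplete} to each of these continuum many logics shows that all of them are topologically incomplete, proving the corollary. The one point demanding care---and the reason the separation is performed semantically, through the algebras $\Up(F)$, rather than syntactically---is that $\Sh$ is not known to be conservative over $\She$ (cf.\ the remark following the definition of $\Sh$); working directly with the finite refuting algebras sidesteps this issue, since the same finite bi-Heyting algebras witness the required refutations in both signatures.
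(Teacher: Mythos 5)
Your proposal is correct and follows essentially the same route as the paper: transfer Litak's continuum of logics in $[\She,\Fi]$ to the HB-interval $[\Sh,\F]$ via the Jankov--Fine method (as the paper notes in Remark~\ref{rem:continuum} and in the sentence preceding the corollary), then apply Corollary~\ref{cor:top-incomplete}. Your elaboration of why the transfer works---finite frames yield bi-Heyting algebras and Jankov--Fine formulas are coimplication-free, so the separation is witnessed semantically despite $\Sh$ not being known to be conservative over $\She$---is exactly the reasoning the paper leaves implicit.
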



Another consequence of our results is the following:

\begin{corollary}\label{cor:nobih}
No variety of Heyting algebras in $[{\mathcal V}(\Fi),{\mathcal V}(\She)]$ is generated by its complete bi-Heyting algebras.
\end{corollary}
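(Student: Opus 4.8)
The plan is to transfer the incompleteness phenomenon established for HB-logics (Corollary~\ref{cor:top-incomplete}) back to the purely intuitionistic setting, exploiting the fact that the logics $\Fi$ and $\She$ already sit inside the interval governed by $\mathcal V(\F)$ and $\mathcal V(\Sh)$. The key observation is that a \emph{complete bi-Heyting algebra} is in particular a complete Heyting algebra, so if a variety $\mathcal V$ in $[\mathcal V(\Fi),\mathcal V(\She)]$ were generated by its complete bi-Heyting members, every identity of $\mathcal V$ (in the Heyting signature) would be forced by those complete bi-Heyting algebras. The strategy is to show this fails precisely because of $\delta_0$.

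First I would fix an arbitrary variety $\mathcal V$ of Heyting algebras with $\mathcal V(\Fi)\subseteq\mathcal V\subseteq\mathcal V(\She)$ and let $\fB$ be any complete bi-Heyting algebra lying in $\mathcal V$. The crucial step is to apply Lemma~\ref{lem:Sthend0}: since $\mathcal V\subseteq\mathcal V(\She)$ and $\fB$ is a bi-Heyting algebra validating all intuitionistic theorems of $\She$, I need that $\fB\models\Sh$. Here one must be slightly careful, because $\Sh$ has genuine coimplication axioms that $\She$ does not mention. However, the axioms $(\ag_0\to\dg_1)\to\dg_0$, $\kg_0$, and $\bb_2$ adjoined to $\HB$ to form $\Sh$ are written purely in the $\to,\lor,\land$ fragment, and every bi-Heyting algebra validates $\HB$ by Theorem~\ref{thm:BH}; so a bi-Heyting algebra in $\mathcal V(\She)$ automatically validates these three extra axioms and hence lies in $\mathcal V(\Sh)$. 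Thus $\fB\models\Sh$, and Lemma~\ref{lem:Sthend0} gives $\fB\models\delta_0$.

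Next I would invoke the other endpoint. Since $\mathcal V(\Fi)\subseteq\mathcal V$ and $\fA\in\mathcal V(\Fi)$, the generating algebra $\fA$ lies in $\mathcal V$, and by Theorem~\ref{thm:ASnod0} (or the valuation displayed in the remark after Corollary~\ref{cor:ASnod0}) we have $\fA\not\models\delta_0$. Hence $\mathcal V\not\models\delta_0$ as a variety of Heyting algebras. Putting the two halves together: every complete bi-Heyting algebra in $\mathcal V$ satisfies $\delta_0$, yet $\mathcal V$ itself does not satisfy $\delta_0$. Therefore the complete bi-Heyting members of $\mathcal V$ validate a proper extension of the Heyting-theory of $\mathcal V$, so they cannot generate $\mathcal V$.

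The only genuine obstacle is the bookkeeping in the first step — confirming that membership of a bi-Heyting algebra in $\mathcal V(\She)$ really does entail membership in $\mathcal V(\Sh)$, i.e.\ that passing from the Heyting variety to its bi-Heyting enrichment does not lose the Shehtman axioms. This is immediate once one notes that those axioms are formulated without $\leftarrow$ and that $\HB$-validity is free on any bi-Heyting algebra; everything else is a direct application of Lemma~\ref{lem:Sthend0} and Theorem~\ref{thm:ASnod0}, exactly parallel to the proof of Theorem~\ref{thm:incompleteness}.
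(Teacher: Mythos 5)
Your proposal is correct and follows essentially the same route as the paper: apply Lemma~\ref{lem:Sthend0} to conclude that every complete bi-Heyting algebra in $\mathcal V$ validates $\delta_0$, while $\fA\in\mathcal V(\Fi)\subseteq\mathcal V$ refutes $\delta_0$, so such algebras cannot generate $\mathcal V$. The only difference is that you spell out the bookkeeping step (that a bi-Heyting algebra whose Heyting reduct lies in $\mathcal V(\She)$ validates $\Sh$, since the extra axioms are coimplication-free and $\HB$ holds automatically), which the paper leaves implicit in its citation of Lemma~\ref{lem:Sthend0}.
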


\begin{proof}
Let $\mathcal V\in[\mathcal V(\Fi),\mathcal V(\She)]$ and let $\fB \in \mathcal V$ be a complete bi-Heyting algebra. Then $\fB\models\delta_0$ by Lemma~\ref{lem:Sthend0}, and $\mathcal V\not\models\delta_0$ by Corollary~\ref{cor:ASnod0}. Thus, $\mathcal V$ cannot be generated by its complete bi-Heyting algebras.
\end{proof}

As an immediate consequence of Corollary \ref{cor:nobih} we obtain:

\begin{corollary}\label{cor:Massas}
No intermediate logic in the interval $[\She,\Fi]$ is complete with respect to complete bi-Heyting algebras.
\end{corollary}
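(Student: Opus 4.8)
The final statement, Corollary~\ref{cor:Massas}, asserts that no intermediate logic in the interval $[\She,\Fi]$ is complete with respect to complete bi-Heyting algebras. The plan is to derive it as an essentially immediate consequence of Corollary~\ref{cor:nobih}, translating the variety-theoretic statement into the dual language of logics via the standard Lindenbaum--Tarski correspondence between intermediate logics and varieties of Heyting algebras.

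First I would recall that each intermediate logic $\Log$ corresponds to the variety $\mathcal V(\Log)$ of Heyting algebras validating $\Log$, and that this correspondence is an order-reversing bijection between logics and varieties: $\Log_1\subseteq\Log_2$ iff $\mathcal V(\Log_2)\subseteq\mathcal V(\Log_1)$. Hence the interval $[\She,\Fi]$ of logics corresponds exactly to the interval $[\mathcal V(\Fi),\mathcal V(\She)]$ of varieties. Now fix $\Log\in[\She,\Fi]$, so that $\mathcal V(\Log)\in[\mathcal V(\Fi),\mathcal V(\She)]$. To say that $\Log$ is complete with respect to complete bi-Heyting algebras means precisely that $\Log$ is validated by exactly those complete bi-Heyting algebras lying in $\mathcal V(\Log)$, i.e.\ that $\mathcal V(\Log)$ is generated (as a variety) by its complete bi-Heyting members. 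But Corollary~\ref{cor:nobih} says exactly that no variety in $[\mathcal V(\Fi),\mathcal V(\She)]$ is generated by its complete bi-Heyting algebras. Combining these gives the conclusion.

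Concretely, I would argue by contradiction: suppose some $\Log\in[\She,\Fi]$ is complete with respect to complete bi-Heyting algebras. Then $\mathcal V(\Log)$ would be generated by its complete bi-Heyting members, contradicting Corollary~\ref{cor:nobih} applied to $\mathcal V(\Log)\in[\mathcal V(\Fi),\mathcal V(\She)]$. It is worth spelling out why completeness of $\Log$ with respect to a class $\mathcal K$ of algebras is equivalent to $\mathcal V(\Log)$ being generated by $\mathcal K$: completeness means that $\Log\vdash\ph$ iff every member of $\mathcal K$ validates $\ph$, and the formulas validated by all of $\mathcal K$ are precisely those validated by the variety $\mathsf{Var}(\mathcal K)$ generated by $\mathcal K$; so $\Log$ is $\mathcal K$-complete iff $\mathcal V(\Log)=\mathsf{Var}(\mathcal K)$, which (since $\mathcal K\subseteq\mathcal V(\Log)$ here) is the statement that $\mathcal V(\Log)$ is generated by its complete bi-Heyting algebras.

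I do not anticipate a genuine obstacle, since the content has already been carried by the analysis of $\delta_0$ underlying Lemma~\ref{lem:Sthend0} and Corollary~\ref{cor:nobih}: every complete bi-Heyting algebra in $\mathcal V(\Log)\subseteq\mathcal V(\She)$ validates $\delta_0$ by Lemma~\ref{lem:Sthend0}, whereas $\delta_0\notin\Log$ because $\mathcal V(\Fi)\subseteq\mathcal V(\Log)$ and $\fA\in\mathcal V(\Fi)$ refutes $\delta_0$ by Corollary~\ref{cor:ASnod0}. Thus the single formula $\delta_0$ witnesses the failure of completeness for every logic in the interval simultaneously. The only point requiring a modicum of care is the clean bookkeeping of the order-reversing logic--variety translation, ensuring that the endpoints $\She,\Fi$ map correctly to $\mathcal V(\She),\mathcal V(\Fi)$ and that ``completeness with respect to complete bi-Heyting algebras'' is faithfully rendered as ``generated by complete bi-Heyting members''; everything else is a direct appeal to the preceding corollary.
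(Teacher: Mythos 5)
Your proposal is correct and matches the paper exactly: the paper derives Corollary~\ref{cor:Massas} as an immediate consequence of Corollary~\ref{cor:nobih}, via the same order-reversing Lindenbaum--Tarski correspondence between logics in $[\She,\Fi]$ and varieties in $[\mathcal V(\Fi),\mathcal V(\She)]$ that you spell out. Your additional unpacking of why ``complete with respect to a class $\mathcal K$'' means ``$\mathcal V(\Log)$ is generated by $\mathcal K$,'' and of the role of $\delta_0$ as the single witnessing formula, is exactly the content the paper leaves implicit.
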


\begin{remark}
Guillaume Massas notified us that he also proved a result analogous to Corollary~\ref{cor:Massas} using different technique.
\end{remark}

We conclude the paper by pointing out that our technique does not generalize directly to the setting of complete Heyting algebras. Indeed, there are two key steps in the proof of Lemma~\ref{lem:Sthend0} and both require that the complete algebra $\fB$ is bi-Heyting. More specifically, in \eqref{eq:bidistr} we use that finite joins distribute over infinite meets; this is not true in a complete Heyting algebra that is not bi-Heyting. In \eqref{eq:meetint} we use that the interior of a closed upset is a clopen upset; this is not true in an extremally order-disconnected Esakia space that is not a BH-space. Thus, further insight is needed to attack Kuznetsov's original problem.


\bibliographystyle{amsplain}

\begin{thebibliography}{10}

\bibitem{BB08}
G.~Bezhanishvili and N.~Bezhanishvili, \emph{{P}rofinite {H}eyting algebras},
  Order \textbf{25} (2008), no.~3, 211--227.

\bibitem{CZ97}
A.~Chagrov and M.~Zakharyaschev, \emph{Modal logic}, Oxford University Press,
  1997.

\bibitem{Esa74}
L.~Esakia, \emph{Topological {K}ripke models}, Soviet Math. Dokl. \textbf{15}
  (1974), 147--151.

\bibitem{Esa75}
\bysame, \emph{The problem of dualism in the intuitionistic logic and
  {B}rowerian lattices}, V Inter. Congress of Logic, Methodology and Philosophy
  of Science, Canada, 1975, pp.~7--8.

\bibitem{Esa78}
\bysame, \emph{Semantical analysis of bimodal tense systems}, Logic, Semantics
  and Methodology, Metsniereba Press, Tbilisi, 1978, pp.~87--99 (Russian).

\bibitem{Esa19}
\bysame, \emph{Heyting algebras: {D}uality theory}, Trends in Logic---Studia
  Logica Library, vol.~50, Springer, 2019, Translated by A.~Evseev. Edited by
  G.~Bezhanishvili and W.~H.~Holliday.

\bibitem{Fin74a}
K.~Fine, \emph{An incomplete logic containing {${\rm S}4$}}, Theoria
  \textbf{40} (1974), no.~1, 23--29.

\bibitem{GdJ74}
D.~M. Gabbay and D.~H.~J. De~Jongh, \emph{A sequence of decidable finitely
  axiomatizable intermediate logics with the disjunction property}, J. Symbolic
  Logic \textbf{39} (1974), 67--78.

\bibitem{Ger75b}
M.~Gerson, \emph{An extension of {$S4$} complete for the neighbourhood
  semantics but incomplete for the relational semantics}, Studia Logica
  \textbf{34} (1975), no.~4, 333--342.

\bibitem{Ger75a}
\bysame, \emph{The inadequacy of the neighbourhood semantics for modal logic},
  J. Symbolic Logic \textbf{40} (1975), 141--148.

\bibitem{HB04}
J.~Harding and G.~Bezhanishvili, \emph{Mac{N}eille completions of {H}eyting
  algebras}, Houston J. Math. \textbf{30} (2004), no.~4, 937--952.

\bibitem{Kuz74}
A.~V. Kuznetsov, \emph{On superintuitionistic logics}, Proceedings of the
  International Congress of Mathematicians (Vancouver), vol.~1, 1974,
  pp.~243--249.

\bibitem{Lit02}
T.~Litak, \emph{A continuum of incomplete intermediate logics}, Rep. Math.
  Logic (2002), no.~36, 131--141.

\bibitem{MT46}
J.~C.~C. McKinsey and A.~Tarski, \emph{On closed elements in closure algebras},
  Ann. of Math. \textbf{47} (1946), 122--162.

\bibitem{RS63}
H.~Rasiowa and R.~Sikorski, \emph{The mathematics of metamathematics},
  Monografie Matematyczne, Tom 41, Pa\'nstwowe Wydawnictwo Naukowe, Warsaw,
  1963.

\bibitem{Rau73}
C.~Rauszer, \emph{Semi-{B}oolean algebras and their applications to
  intuitionistic logic with dual operations}, Fund. Math. \textbf{83}
  (1973/74), no.~3, 219--249.

\bibitem{She77}
V.~Shehtman, \emph{Incomplete propositional logics}, Dokl. Akad. Nauk SSSR
  \textbf{235} (1977), no.~3, 542--545, (Russian).

\bibitem{She80}
\bysame, \emph{Topological models of propositional logics}, Semiotics and
  Information Science, {N}o. 15 ({R}ussian), Akad. Nauk SSSR, Moscow, 1980,
  pp.~74--98.

\bibitem{She98}
\bysame, \emph{On strong neighbourhood completeness of modal and intermediate
  propositional logics. {I}}, Advances in modal logic, {V}ol. 1 ({B}erlin,
  1996), CSLI Lecture Notes, vol.~87, CSLI Publ., Stanford, CA, 1998,
  pp.~209--222.

\bibitem{Wol98}
F.~Wolter, \emph{On logics with coimplication}, J. Philos. Logic \textbf{27}
  (1998), no.~4, 353--387.

\end{thebibliography}
\providecommand{\bysame}{\leavevmode\hbox to3em{\hrulefill}\thinspace}
\providecommand{\MR}{\relax\ifhmode\unskip\space\fi MR }
\providecommand{\MRhref}[2]{%
  \href{http://www.ams.org/mathscinet-getitem?mr=#1}{#2}
}
\providecommand{\href}[2]{#2}

\end{document}